\documentclass[11pt]{article}
\usepackage{csquotes}
\usepackage[numbers]{natbib}
\usepackage{fix-cm}

\usepackage{amsmath}    
\usepackage{graphicx}   
\usepackage{hyperref}   
\usepackage{fancyhdr}   

\usepackage[a4paper, total={6in, 8in}]{geometry}
\usepackage{physics}
\usepackage{amssymb,latexsym,amsthm,amsfonts,mathrsfs}
\usepackage{caption,color,graphicx}
\usepackage{xspace}
\usepackage[american]{babel}
\usepackage{bbm}
\usepackage{enumerate}
\usepackage{verbatim}
\usepackage{mathtools}

\usepackage{stackengine}
\usepackage{xcolor}
\usepackage{braket}
\usepackage{multicol}
\usepackage{float}
\usepackage{enumitem}

\theoremstyle{plain}
\newtheorem{theorem}{Theorem}[section]
\newtheorem{lemma}[theorem]{Lemma}
\newtheorem{proposition}[theorem]{Proposition}
\newtheorem{corollary}[theorem]{Corollary}

\theoremstyle{definition}
\newtheorem{definition}[theorem]{Definition}
\newtheorem{remark}[theorem]{Remark}

\newcommand{\ul}[1]{\underline{#1}}

\newcommand{\N}{\mathbb{N}}
\newcommand{\Z}{\mathbb{Z}}

\newcommand{\R}{\mathbb{R}}
\newcommand{\C}{\mathbb{C}}
\newcommand{\E}{\mathbb{E}}

\newcommand{\calO}{\mathcal{O}}
\newcommand{\calP}{\mathcal{P}}

\newcommand{\calR}{\mathcal{R}}

\newcommand{\bfS}{\textbf{S}\xspace}
\newcommand{\bfU}{\textbf{U}\xspace}
\newcommand{\bfD}{\textbf{D}\xspace}

\newcommand{\id}{\mathrm{id}}
\newcommand{\LP}{\mathrm{LP}}
\newcommand{\RP}{\mathrm{RP}}
\newcommand{\len}{\mathrm{len}}
\newcommand{\RtoL}{\mathrm{R\to L}}
\newcommand{\rmR}{\mathrm{R}}
\newcommand{\rmL}{\mathrm{L}}
\newcommand{\iso}{\mathrm{iso}}
\newcommand{\sym}{\mathrm{sym}}
\newcommand{\alt}{\mathrm{alt}}
\newcommand{\Sp}{\mathrm{Sp}}
\newcommand{\up}{\mathrm{up}}
\newcommand{\down}{\mathrm{down}}
\newcommand{\im}{\mathrm{im}}
\newcommand{\aux}{\mathrm{aux}}

\newcommand{\ie}{i.e.,\ }

\newcommand{\scrF}{\mathscr{F}}

\usepackage{amssymb,latexsym,amsthm,amsmath,amsfonts,mathrsfs}
\usepackage{caption,color,graphicx}
\usepackage{hyperref}
\usepackage{bbm}
\usepackage{verbatim}
\usepackage{stackengine}
\usepackage{xcolor}
\usepackage{braket}
\usepackage[normalem]{ulem}

\newcommand\xof[3][1ex]{\ensurestackMath{%
  \setbox0=\hbox{$#2$}%
  \setbox2=\hbox{$#3$}%
  \kern\wd0\kern-\dimexpr#1\relax%
  \stackinset{r}{#1}{b}{\dimexpr#1-1ex}{\mathrlap{#3}}{%
    \stackinset{l}{#1}{t}{\dimexpr#1-1ex}{\mathllap{#2}}{\bigg/}%
  }%
  \kern-\dimexpr#1\relax\kern\wd2%
}}
\allowdisplaybreaks
\title{\texorpdfstring{\textsc{Root-to-Leaf Path Random Walks, Normalized Hodge\\ Laplacians, and Cheeger Inequalities on\\ Simplicial Complexes}}{Root-to-Leaf Path Random Walks, Normalized Hodge Laplacians, and Cheeger Inequalities on Simplicial Complexes}}
\author{}
\date{}

\makeatletter
\renewcommand{\maketitle}{%
  \begin{center}
    {\fontsize{14.4}{16.8}\selectfont \parbox{0.98\textwidth}{\centering \@title\par}}
    
    \vspace{1.6em}
    
    {\normalsize
    Francesco Vigan\`o\footnotemark[1] \quad
    Tolga Birdal\footnotemark[2] \quad
    Michael T. Schaub\footnotemark[3] \quad
    Mauricio Barahona\footnotemark[4]\par}
  \end{center}
  \footnotetext[1]{Imperial College London, \texttt{vigano.francesco@outlook.com}}
  \footnotetext[2]{Imperial College London, \texttt{t.birdal@imperial.ac.uk}}
  \footnotetext[3]{RWTH Aachen University, \texttt{schaub@netsci.rwth-aachen.de}}
  \footnotetext[4]{Imperial College London, \texttt{m.barahona@imperial.ac.uk}}
  \vspace{1.4em}
  
  \begin{center}
    \textbf{Abstract}
  \end{center}
  \small
  \noindent We introduce root-to-leaf path random walks on double covers of graded signed graphs and analyze their behavior in a general setting. Viewing simplicial complexes within this framework, we show that these walks induce the natural normalization of the coboundary operator and of the Hodge Laplacians while preserving the basic structural features of combinatorial Hodge theory. We then derive Cheeger inequalities for the upper side of the normalized Hodge spectrum, identify the coherent structures governing these bounds, and combine the up- and down-cases into sharper estimates.
\par
  \normalsize
  
  \vspace{1.1em}
}
\makeatother

\begin{document}

\newgeometry{top=1.07in,bottom=1.06in,left=1.1in,right=1.1in}
\linespread{1.}\selectfont
\sloppy

\maketitle

\section*{Introduction}
This paper develops a general theory of root-to-leaf path random walks on double covers of graded signed graphs. Such a structure consists of a graph with a notion of “above” and “below” across nodes (the grading), in which each node has a “flipped” companion (the doubleness), and edges carry a positive or negative sign (the signature). The walker moves either up along positive edges or down along negative ones, with transition probabilities determined by root-to-leaf paths, namely paths connecting down-most nodes (roots) to up-most nodes (leaves). The spectral behavior of the dynamics decomposes into two half-dimensional operators: one corresponding to the unsigned quotient and the other capturing the signed structure of the object. \\

Simplicial complexes provide a natural instance of this framework through a construction closely related to the Hasse diagram of their face poset. Faces act as nodes of the abstract graph, graded by dimension, with orientation determining both the “flipped” counterpart of a face and the signature between a face and its subfaces. In this setting, the induced random walk across dimensions yields a natural normalization of the coboundary operator and the Hodge Laplacians, while preserving key combinatorial properties, including the Hodge decomposition and the spectral correspondence between adjacent up- and down-operators. For the resulting normalized Laplacians, we establish Cheeger inequalities on the upper side of the spectrum, combining the up- and down-cases into sharper bounds. These take the form
\begin{align*}
\frac{\max \left(
\frac{\big(h_{k-1}^\up \big)^2}{k},
\frac{\big(h_k^\down \big)^2}{d_k^\down}
\right)}{2 \cdot (k + 1)}
&\le 1 - \lambda_{\max} \big( \Delta_{k-1}^\up \big) \\
&= 1 - \lambda_{\max} \big( \Delta_k^\down \big) \le
\frac{ 2 \cdot \min \left( h_{k-1}^\up, h_k^\down \right)}{k + 1},
\end{align*}
where $\Delta_{k-1}^\up$ and $\Delta_k^\down$ are the normalized up- and down-Laplacians in dimensions $k-1$ and $k$, $h_{k-1}^\up$ and $h_k^\down$ are the corresponding Cheeger constants, and $d_k^\down$ is the relevant combinatorial down-degree term; the middle quantity is their shared spectral gap from the upper bound $1$. \\

This line of research begins in the simplest setting of a graph with no additional structure, its adjacency matrix, and its combinatorial (standard) Laplacian $L$. The smallest eigenvalue $\lambda_{\min}(L)$ is always zero, and the second smallest eigenvalue $\lambda_{\min + 1}(L)$ is zero if and only if the graph is disconnected. More generally, the multiplicity of $0$ as an eigenvalue of $L$ corresponds to the number of connected components of the graph. Intuitively, if the graph is connected, but nearly disconnected with respect to some metric, then $\lambda_{\min + 1}(L)$ will be close to zero. This metric is the Cheeger constant of a graph, and the relationship between this topological quantity and $\lambda_{\min + 1}(L)$ can be quantified using a Cheeger inequality. \\

Cheeger inequalities were first explored in 1970, in the framework of differential manifolds \cite{cheeger1970}, relating the spectral gap of the Laplace-Beltrami operator to the isoperimetric constant of a manifold. Thereafter, several works in the 80's extended Cheeger's result to the discrete setting of graphs \cite{dodziuk1984,alon1985,alon1986eigenvalues,mohar1989isoperimetric}, relating the spectra of the combinatorial Laplacian $L$ and normalized Laplacian $\Delta$ to the Cheeger cut problem, that is, partitioning the nodes of a graph into two components that have a small proportion of inter-class edges (with respect to the same metric that defines the Cheeger constant). Cheeger inequalities in this setting are now considered fundamental results in spectral graph theory. \\

More specifically, a two-sided Cheeger inequality quantifies the lower spectral gap of $L$ (\ie how far $\lambda_{\min + 1}(L)$ is from zero) in terms of the Cheeger constant of a graph. Furthermore, the proof of this inequality provides a sub-optimal solution to the Cheeger cut problem. Finding the optimal solution to the Cheeger cut problem is an NP-hard problem, and this algorithm of polynomial complexity provides an approximation obtained from the eigenfunction associated with $\lambda_{\min + 1}(L)$. \\

In addition to graph clustering, spectral graph theory prolifically serves other applications, including graph embeddings \cite{ng2001spectral,belkin2001laplacian}, random walks on graphs \cite{lovasz1993random}, and learning models \cite{digiovanni2023understanding,kipf2016semi,defferrard2016convolutional,huang2026hogdiff}. For graph embeddings, the eigenfunctions associated with low eigenvalues provide an optimal embedding in a Euclidean space with respect to the Dirichlet energy of a graph. In graph random walks, Cheeger inequalities characterize how the graph topology (in particular, how close to disconnected or bipartite a graph is) obstructs mixing over the nodes. Finally, high and low frequencies of the graph (\ie eigenfunctions associated with high and low eigenvalues of the normalized Laplacian) are relevant in the message passage dynamics of graph-based learning models. \\

It is important to note that the Laplacian considered in applications is often the normalized Laplacian $\Delta$ and a spectral theory and Cheeger inequalities for this operator are more appropriate in this context. Additionally, normalizing the Laplacian operator enables detecting topological properties that are independent of node degrees and regularity conditions of the graph. For instance, the spectrum of $\Delta$ is bounded between $0$ and $2$, and the value $2$ is an eigenvalue of $\Delta$ if and only if the graph admits a bipartite connected component. In a similar way, there is a Cheeger inequality relating the spectral gap between $\lambda_{\max}(\Delta)$ and the value $2$ to a metric that quantifies how far the graph is from admitting a bipartite component. An elegant extension is provided by \cite{atay2020} in the context of signed graphs, where the relevant topological structures are balanced and antibalanced components (and such properties are dual to each other). \\

In recent years, there has been an increasing effort to investigate the aforementioned applications in the context of simplicial complexes, a natural higher-order analog of graphs~\cite{bick2023higher}. This is motivated by several reasons, such as overcoming the limitations of pair-wise interaction, or handling geometric datasets that are inherently higher-dimensional. In particular, in \cite{schaub2020} the authors analyze the behavior of random walks on the edges of a simplicial complex, incorporating additional aspects of the topology of these objects, rather than graphs. Using spectral properties of the Hodge Laplacian, clustering algorithms are proposed in \cite{krishnagopal2021spectral}, which extract and predict higher-order interactions, revealing simplicial communities across various datasets. Simplicial models have also been studied from a geometric perspective, in the context of hyperbolic network geometry \cite{bianconi2017emergent}. Weighted simplicial complexes have further been introduced \cite{baccini2022weighted}, overcoming the limitations of unweighted models by preserving information while still capturing higher-order topology. In \cite{bodnar2021weisfeiler}, the authors propose a framework that extends the Weisfeiler-Lehman graph isomorphism algorithm to simplicial complexes, enabling message passing on higher-order topological structures for improved representation learning. We refer the reader to \cite{hajij2022topological,papamarkou2024position} for a detailed review of higher-order interaction learning models, including architectures that are specifically built on simplicial complexes. \\

It is then natural to investigate a spectral theory of simplicial complex operators, seeking a generalization of the classic results on graphs. Hodge Laplacians $L_k$ can be defined on a simplicial complex through its boundary operators, where the index $k$ denotes the face dimension. These operators encode meaningful properties of the simplicial complex -- such as its (co-)homology, through the Hodge decomposition of $L_k$. The Hodge Laplacian splits into its up- and down-components as $L_k = L_k^\up + L_k^\down$. On the nodes of a graph, when $k=0$, the down-component is reduced to the zero operator. One can restrict oneself to the spectral analysis of up-Laplacians, since the non-zero spectrum of $L_k^\down$ coincides with the non-zero spectrum of $L_{k-1}^\up$. The zero eigenvalue represents an exception, and its study is strictly linked to the (co-)homology theory of the simplicial complex. \\

Motivated by the applications of this spectral theory, it is of interest to consider a proper normalization of Hodge Laplacians. Generalizing the graph framework to simplicial complexes, a good normalization procedure should be obtained based on some notion of face degree, it should bound the spectrum of such normalized operators uniformly over all simplicial complexes, and, importantly, it should ensure that spectral gaps are saturated if and only if the simplicial complex of interest possesses certain topological properties (which are independent of face degree and regularity conditions). \\

In order to achieve this, we develop the theory of root-to-leaf path random walks on double covers of graded signed graphs in Section \ref{chapter.root_to_leaf}. We show how the transition matrix of this random walk can be understood through two half-dimensional operators: one corresponding to a random walk on the quotient graph that discards the signature; the other being a signed operator that encodes more geometric information related to the orientability of the discrete structure. Crucially, simplicial complexes, together with the inclusion structure of their faces and the orientation defined by the boundary operator, constitute a natural example of such double covers. Delightfully, we find that the second operator is related to the normalized Laplacian $\Delta_k = \Delta_k^\up + \Delta_k^\down$. More generally, root-to-leaf path random walks on simplicial complexes reveal the elegant normalization procedure for the coboundary operator introduced in \cite{horak2013spectra}. We explore the specialization of the root-to-leaf path random walk to simplicial complexes and their normalized Laplacians in the first part of Section \ref{chapter.norm_lapl_cheeger}. \\

As for graphs, the spectra of the normalized up- and down-Laplacians are uniformly bounded. Specifically, the operators $\Delta_k^\up$ and $\Delta_k^\down$ are uniformly bounded from above by the value $1$, since they are derived from conditional random up- and down-walks (variants of root-to-leaf path random walks). Thus, it is natural to ask what combinatorial/topological properties of the simplicial complex correspond to lower and upper bounds of these spectra. Moreover, one could seek to obtain Cheeger inequalities that quantify, in terms of the spectra of these operators, how far the simplicial complex is from admitting such structures. This is explored in the last part of Section \ref{chapter.norm_lapl_cheeger}. In contrast, the lower side of the spectrum is more challenging to analyze: the lower bound is $0$, Cheeger constants are more involved, and the relevant topological structures are related to the (co-)homology of the simplicial complex. In \cite{jost2023}, the authors present a possible definition of Cheeger constants for the lower bound of an alternative normalization of the up-Laplacian. In this work, we limit ourselves to the analysis of the upper side of the spectrum. We prove Cheeger inequalities for the upper side of the spectrum by leveraging the theory of signed graphs, following an idea of \cite{jost2023}, and the topological structures of interest are referred to in our work as coherent-up- and -down-components (generalizing the notion of bipartiteness). By exploiting the relation between the spectra of normalized up- and down-Laplacians in different dimensions, we are able to merge two Cheeger inequalities into more effective and tighter bounds. We also provide a tractable example to show the effectiveness of such combined Cheeger inequalities. \\

Section \ref{chapter.root_to_leaf} introduces the general theory of root-to-leaf path random walks on double covers of graded signed graphs, together with the quotient and signed operators that govern their dynamics. Section \ref{chapter.norm_lapl_cheeger} then specializes this framework to simplicial complexes, derives the induced normalization of the coboundary operator and of the Hodge Laplacians, and establishes the Cheeger inequalities for the upper side of the spectrum, including the combined bounds and an explicit example.

\section{Root-to-leaf path random walks}\label{chapter.root_to_leaf}
\begingroup
\let\section\subsection
\let\subsection\subsubsection
\let\subsubsection\paragraph
In this section, we develop the theory of root-to-leaf path random walks in a general setting. We first outline a simpler case to provide some intuition (Section \ref{section.signed_graphs_classic_random_walk}). We then introduce the objects the random walker walks on: double covers of graded signed graphs (Section \ref{section.double_covers_graded_signed_graphs}). We describe leaves and roots (Section \ref{section.leaves_roots}), and quotient and cover components (Section \ref{section.components}) of such objects. We then move on to describing the rules of the root-to-leaf path random walk (Section \ref{section.rules_random_walk}), determining its stationary distribution (Section \ref{section.stationary_distribution}), and studying the operators associated with this random process (Section \ref{section.associated_operators}). Finally, we introduce two important variants, the conditional random up- and down-walks, and study their convergence as random processes (Section \ref{section.conditional_random_walks}). \\

For an introduction to spectral graph theory, we refer the reader to the comprehensive book \cite{chung1997spectral}. For random walks on graphs, we recommend the definitive survey \cite{lovasz1993random}, along with \cite{burioni2005random} and \cite{sarkar2011random}. Signed graphs were first introduced in \cite{harary1953notion}, and we also refer to \cite{zaslavsky1982signed}. Random walks on signed graphs were studied in \cite{jung2016personalized}.

\section{A simpler case: random walks on signed graphs}\label{section.signed_graphs_classic_random_walk}

\begin{figure}[H]
    \centering
    \includegraphics[scale=0.8]{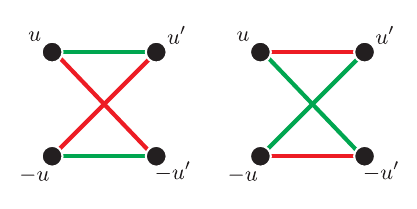}
    \caption{Given an edge $(u, u')$ in $\Gamma^0$, the four corresponding edges in the double cover are drawn. On the left, the original edge $(u, u')$ has positive signature (colored in green); on the right, it has negative signature (colored in red).}
    \label{figure.edge_construction}
\end{figure}

Let $\Gamma^0 = (X^0, E^0, s^0)$ be an undirected signed graph. This means that $X^0$ is the finite set of nodes of $\Gamma^0$, $E^0$ is the set of undirected edges on $X^0$, and $s^0 \colon E^0 \to \{\pm 1\}$ is the edge signature function, satisfying $s^0(u, u') = s^0(u', u)$. From this, we can construct a duplicate $-X^0$ of the set of nodes $X^0$, and consider the disjoint union $X = X^0 \amalg (-X^0)$. We think of $-u$ as the upside-down version of $u$, or the node $u$ with reversed orientation. We extend $E^0$ to $E$ by saying that, given an edge $(u,u') \in E^0$, then $(u,u'), (-u,u'), (u,-u'), (-u,-u')$ are all (undirected) edges in $E$. Finally, we extend the signature $s^0$ to $s \colon E \to \{\pm 1\}$, in the unique way satisfying $s(-u, u') = s(u, -u') = - s(u, u')$. We refer to the signed graph $\Gamma = (X, E, s)$ as the double cover of $\Gamma^0$. The double cover $\Gamma$ contains twice as many nodes, and four times as many edges, compared to $\Gamma^0$. We can also consider the quotient $\ul{\Gamma} = (\ul{X}, \ul{E})$, that is obtained by identifying $u$ and $u'$, collapsing them into a single quotient node $\ul{u} = \ul{-u}$. The quotient $\ul{\Gamma}$ contains the same number of nodes and edges of $\Gamma^0$. Notice that $\ul{\Gamma}$ differs from $\Gamma^0$, in the sense that $\ul{\Gamma}$ is not a signed graph. In Figure \ref{figure.edge_construction} and Figure \ref{figure.double_cover_not_graded} we provide a visualization of the double cover construction.

\begin{figure}[H]
    \centering
    \includegraphics[scale=0.8]{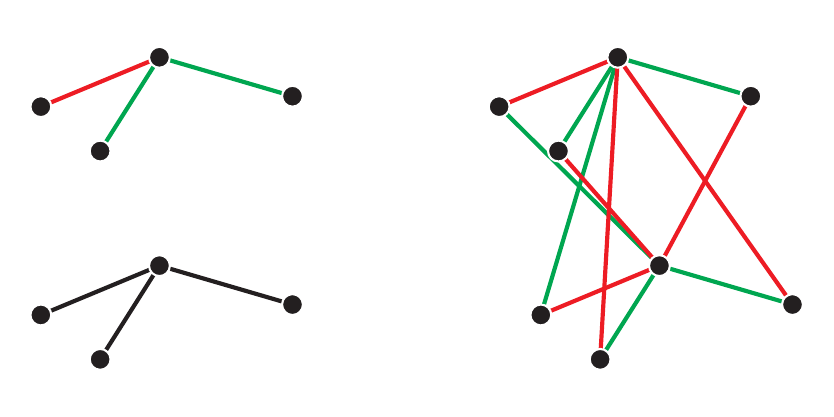}
    \caption{Example of construction of a double from a signed graph. On the top-left corner, the original signed graph $\Gamma^0$ (edges of positive signature are in green, edges of negative signature are in red). On the right, the constructed double cover $\Gamma$. On the bottom-left corner, the quotient graph $\ul{\Gamma}$ (unsigned edges are in black).}
    \label{figure.double_cover_not_graded}
\end{figure}

It turns out that the natural extension of the classic random walk from unsigned to signed graphs, which takes into account the signature function of $\Gamma^0$, has $X$ as state space, rather than $X^0$. The one-step rule for this random walk is as follows (assuming that $\Gamma^0$ has no isolated nodes). Suppose that a walker is at a node $u \in X$. Then, they choose uniformly at random one node among those neighbors $u' \in X$ of $u$ that satisfy $s(u, u') = 1$. In Figure \ref{figure.one_step_random_walk_not_graded}, we sketch an example of such one-step update. For this random walk, the spectrum of the transition matrix $P^\Gamma$ can be described in terms of two half-dimensional operators. The former is $A^{\ul{\Gamma}}$, the symmetric normalized adjacency matrix of the unsigned quotient graph $\ul{\Gamma}$, or equivalently $P^{\ul{\Gamma}}$, the transition matrix of the induced random walk on $\ul{\Gamma}$. The latter is $A^0$, the symmetric normalized adjacency matrix of the signed graph $\Gamma^0$. \\

\begin{figure}[H]
    \centering
    \includegraphics[scale=0.62]{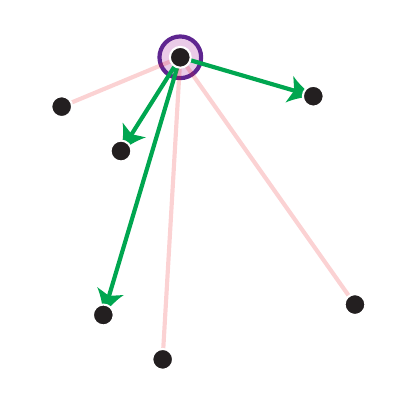}
    \caption{A one-step update of the random walk on the double cover $\Gamma$. The walker is currently at the node circled in purple. They are allowed to move only along edges of positive signature (colored in green). Forbidden edges (of negative signature, colored in red) are opaque.}
    \label{figure.one_step_random_walk_not_graded}
\end{figure}

From this description, we can observe two phenomena. Figure \ref{figure.four_relevant_cases_not_graded} provides an intuitive visualization of the interaction between the random walk behavior and the relevant geometric properties.

 \begin{figure}[H]
    \centering
    \includegraphics[scale=0.55]{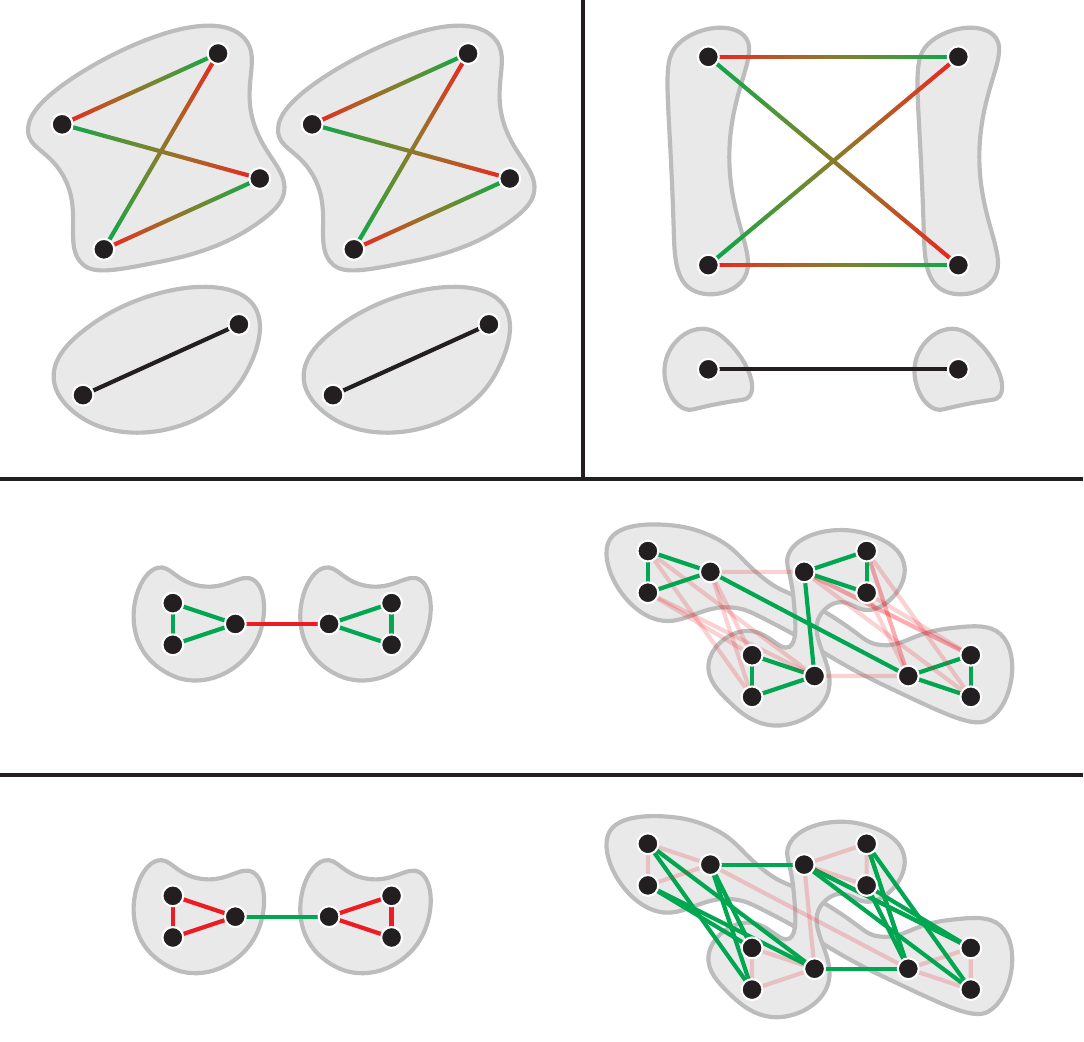}
    \caption{The four topological/combinatorial structures governing root-to-leaf random walks on double covers and the extremal eigenvalues of the associated operators: disconnected quotient, bipartite quotient, balanced signed graph, and antibalanced signed graph. The first and third cases yield an extra eigenvalue $1$, whereas the second and fourth yield an extra eigenvalue $-1$.}
    \label{figure.four_relevant_cases_not_graded}
\end{figure}

First, a random process on the double cover $\Gamma$ is spectrally described by operators associated with the half-dimensional geometric spaces $\ul{\Gamma}$ and $\Gamma^0$. Second, as it is known, the extremal eigenvalues of the operators $A^{\ul{\Gamma}}$ and $A^0$ are related to geometric properties of the spaces $\ul{\Gamma}$ and $\Gamma^0$, respectively. In particular, the behavior of the considered random walk on $\Gamma$ is influenced by the number of connected components and bipartite connected components of $\ul{\Gamma}$, as well as the number of balanced and antibalanced components of $\Gamma^0$.

We hope that the reader bears in mind this analogy throughout the rest of the chapter, which follows the same philosophy in its investigation.

\section{Double covers of graded signed graphs}\label{section.double_covers_graded_signed_graphs}

\begin{definition}\label{definition.double_covers_graded_signed_graphs}
A \textbf{double cover $\Gamma$ of a graded signed graph} is the datum of $\Gamma = (X, E, [ \ : \ ], \dim, -)$, where:
\begin{itemize}
    \item $X$ is a finite non-empty set of \textbf{vertices}, or \textbf{nodes}.
    \item $E$ is a set of directed \textbf{edges} on $X$. If a directed edge joins $u$ to $v$, we write $u \subset v$, or equivalently $v \supset u$.
    \item $[ \ : \ ] \colon E \to \{\pm 1\}$ is a \textbf{signature} function defined on $E$. Given a directed edge $u \subset v$, we denote by $[v : u]$ its signature. We say that $u$ and $v$ are \textbf{coherently oriented} if $[v : u] = 1$, and \textbf{not coherently oriented} if $[v : u] = -1$.
    \item $\dim \colon X \to \Z$ is a \textbf{grading} on $X$. We refer to $\dim(u)$ as the \textbf{dimension} of a vertex $u$. A grading defines a partition $X = \amalg_{k \in \Z} X_k$, where $X_k$ is the subset of $X$ containing the vertices of dimension $k$ (possibly empty).
    \item $- \colon X \to X$ is an \textbf{involution} of $X$ with no fixed points. For a given $u \in X$, we say that $-u$ has \textbf{opposite} or \textbf{reversed orientation} compared to $u$. We say that $u$ and $-u$ constitute an \textbf{involutory pair} of nodes.
\end{itemize}
Moreover, these properties must be satisfied:
\begin{itemize}
    \item Compatibility between $E$ and $\dim$: if $u \subset v$, then $\dim(u) < \dim(v)$.
    \item Compatibility between $E$ and $-$: if $u \subset v$, then also $-u \subset v$ and $u \subset -v$ (and, therefore, $-u \subset -v$).
    \item Compatibility between $[ \ : \ ]$ and $-$: if $u \subset v$, then $[-v : u] = [v : -u] = - [v : u]$ (and, therefore, $[-v : -u] = [v : u]$).
    \item Compatibility between $\dim$ and $-$: $\dim(-u) = \dim(u)$ for any $u \in X$.
\end{itemize}
\end{definition}

\begin{definition}\label{definition.strongly_graded}
    We say that $\Gamma$ is a \textbf{double cover of a strongly graded signed graph}, or that the grading $\dim$ is \textbf{strong}, if, for any edge $u \subset v$, $\dim(v) = \dim(u) + 1$.
\end{definition}

\begin{definition}\label{definition.involutory_quotient}
    The \textbf{involutory quotient} $\ul{\Gamma} = \Gamma / \pm$ of $\Gamma = (X, E, [ \ : \ ], \dim, -)$ is the graded directed unsigned graph given by $\ul{\Gamma} = (\ul{X}, \ul{E}, \dim)$, where:
    \begin{itemize}
        \item $\ul{X}$ is the quotient $X/\pm$. We denote by $\ul{u}$ the equivalence class of $u$ and $-u$.
        \item $\ul{E}$ is the set of directed edges on $\ul{X}$ induced by $E$: a directed edge joins $\ul{u}$ to $\ul{v}$, and we write $\ul{u} \subset \ul{v}$, if and only if $u \subset v$.
        \item $\dim \colon \ul{X} \to \Z$ is induced by the grading $\dim$ on $\Gamma$: $\dim(\ul{u}) = \dim(u)$.
    \end{itemize}
    We denote by $\ul{ \ \cdot \ } \colon \Gamma \to \ul{\Gamma}$ the projection onto the quotient, that is invariant under pre-composition with the involution of $\Gamma$: $\ul{ \ \cdot \ } \circ - = \ul{ \ \cdot \ }$. We also denote by $N$ the number of nodes in $\ul{X}$, so that $2N$ is the number of nodes in $X$. Similarly, we denote by $N_k$ the number of nodes in $\ul{X}_k$, so that $2N_k$ is the number of nodes in $X_k$.
\end{definition}

\begin{definition}\label{definition.orientation}
    An \textbf{orientation} on $\Gamma$ is a graph homomorphism $\calO \colon \ul{\Gamma} \to \Gamma$ that is a section of the projection $\ul{ \ \cdot \ }$, that is, $\ul{ \ \cdot \ } \circ \calO = \id_{\ul{\Gamma}}$. In particular, the image $\calO(\ul{\Gamma})$ is a graded directed signed subgraph of $\Gamma$. For each $\ul{u} \in \ul{X}$, $\calO(\ul{u})$ is either $u$ or $-u$, and the unsigned edge $\ul{u} \subset \ul{v}$ is mapped by $\calO$ to the signed edge $\calO(\ul{u}) \subset \calO(\ul{v})$.
\end{definition}

\begin{remark}
    There are exactly $2^N$ possible orientations on $\Gamma$.
\end{remark}

\begin{remark}\label{remark.definition_from_signed_graph}
    All the signed graphs obtained through orientations $\calO \colon \ul{\Gamma} \to \Gamma$ are switching-equivalent to each other, according to \cite[Definition 2.1]{atay2020}. Technically, $\Gamma$ is a double cover of $\ul{\Gamma}$ in an ``unsigned'' way, and not of $\calO(\ul{\Gamma})$ -- as there is no natural map from $\Gamma$ to $\calO(\ul{\Gamma})$ that well behaves on the signature. More precisely, $\Gamma$ is rather the space of all switching-equivalent realizations of a given signed graph $\calO(\ul{\Gamma})$, obtained by embedding $\ul{\Gamma}$ into $\Gamma$ via sections of the projection. Despite this, we took the liberty to refer to $\Gamma$ as a double cover of a signed graph. All the signed graphs $\calO(\ul{\Gamma})$, as $\calO$ varies, are spectrally equivalent to each other, as clarified in Remark \ref{remark.flip_function_calO}.
\end{remark}

\section{Leaves and roots}\label{section.leaves_roots}

\begin{definition}\label{definition.leaf_root}
    A \textbf{leaf} of $\ul{\Gamma}$ is a vertex $\ul{u}$ such that there is no other vertex $\ul{v}$ satisfying $\ul{v} \supset \ul{u}$. We denote by $\rmL(\ul{\Gamma})$ the subset of $\ul{X}$ containing the leaves of $\ul{\Gamma}$. Dually, a \textbf{root} of $\ul{\Gamma}$ is a vertex $\ul{u}$ such that there is no other vertex $\ul{t}$ satisfying $\ul{t} \subset \ul{u}$. We denote by $\rmR(\ul{\Gamma})$ the subset of $\ul{X}$ containing the roots of $\ul{\Gamma}$.
\end{definition}

We also denote by $\rmL(\Gamma)$ and $\rmR(\Gamma)$ the sets of nodes of $\Gamma$ whose images via the quotient map $\ul{ \ \cdot \ }$ are leaves and roots, respectively. However, we reserve the terms ``leaf'' and ``root'' for nodes of $\ul{\Gamma}$.

\begin{definition}\label{definition.leaf_root_path_function}
    We define the \textbf{leaf-path function} $\LP \colon \ul{X} \to \N$ of $\ul{\Gamma}$ recursively from above:
    \[
    \LP(\ul{u}) =
    \begin{cases}
        \sum_{\ul{v} \supset \ul{u}} \LP(\ul{v}) & \text{if $\ul{u}$ is not a leaf}, \\
        1 & \text{if $\ul{u}$ is a leaf}.
    \end{cases}
    \]
    Dually, we define the \textbf{root-path function} $\RP \colon \ul{X} \to \N$ of $\ul{\Gamma}$ recursively from below:
    \[
    \RP(\ul{u}) =
    \begin{cases}
        \sum_{\ul{t} \subset \ul{u}} \RP(\ul{t}) & \text{if $\ul{u}$ is not a root}, \\
        1 & \text{if $\ul{u}$ is a root}.
    \end{cases}
    \] 
\end{definition}
\begin{definition}\label{definition.ascending_descending_path}
    An \textbf{ascending path} $\eta$ from $\ul{w}$ to $\ul{w'}$ is a sequence of vertices $\ul{w} = \ul{w}_0, \ul{w}_1, \dots, \ul{w}_n = \ul{w'}$ of $\ul{\Gamma}$ such that $\ul{w}_{i-1} \subset \ul{w}_i$ for all $i = 1, \dots, n$. The value $n$ is the \textbf{length} of $\eta$, denoted by $\len(\eta)$, that is, the number of vertices contained in $\eta$ minus one. We denote by $\calP^\uparrow(\ul{w}, \ul{w'})$ the set of ascending paths from $\ul{w}$ to $\ul{w'}$, and by $\calP^\uparrow(\ul{w})$ the set of ascending paths from $\ul{w}$ to some leaf $\ul{\ell}$ of $\ul{\Gamma}$. In other words,
        \[
        \calP^\uparrow(\ul{w}) = \bigcup_{\ul{\ell} \in \rmL(\ul{\Gamma})} \calP^\uparrow(\ul{w}, \ul{\ell}).
    \]
    Dually, a \textbf{descending path} $\eta$ from $\ul{w}$ to $\ul{w'}$ is a sequence of vertices $\ul{w} = \ul{w}_0, \ul{w}_1, \dots, \ul{w}_n = \ul{w'}$ of $\ul{\Gamma}$ such that $\ul{w}_{i-1} \supset \ul{w}_i$ for all $i = 1, \dots, n$. We denote by $\calP^\downarrow(\ul{w}, \ul{w'})$ the set of descending paths from $\ul{w}$ to $\ul{w'}$, and by $\calP^\downarrow(\ul{w})$ the set of descending paths from $\ul{w}$ to some root $\ul{r}$ of $\ul{\Gamma}$. In other words,
    \[
    \calP^\downarrow(\ul{w}) = \bigcup_{\ul{r} \in \rmR(\ul{\Gamma})} \calP^\downarrow(\ul{w}, \ul{r}).
    \]
    We also denote by $\calP_{\RtoL}$ the set of \textbf{root-to-leaf paths}, \ie ascending paths $\eta$ from $\ul{r}$ to $\ul{\ell}$, for some root $\ul{r}$ and leaf $\ul{\ell}$ of $\ul{\Gamma}$:
    \[
    \calP_{\RtoL} = \bigcup_{\substack{\ul{r} \in \rmR(\ul{\Gamma}) \\ \ul{\ell} \in \rmL(\ul{\Gamma})}} \calP^\uparrow(\ul{r}, \ul{\ell}).
    \]
\end{definition}

\begin{remark}\label{remark.paths_of_length_zero_allowed}
    Ascending or descending paths of length zero (that is, consisting of only one vertex) are allowed.
\end{remark}

\begin{remark}\label{remark.ascending_paths_descending_paths}
    There is a natural bijection between $\calP^\uparrow(\ul{w}, \ul{w'})$ and $\calP^\downarrow(\ul{w'}, \ul{w})$, and root-to-leaf paths can also be seen as descending paths from leaves to roots of $\ul{\Gamma}$.
\end{remark}
\begin{lemma}\label{lemma.number_of_paths_alternative_computation_leaf_root_path_function}
    $\LP(\ul{u})$ is the number of ascending paths from $\ul{u}$ to some leaf $\ul{\ell}$ of $\ul{\Gamma}$. Dually, $\RP(\ul{u})$ is the number of descending paths from $\ul{u}$ to some root $\ul{r}$ of $\ul{\Gamma}$.
\end{lemma}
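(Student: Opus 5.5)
We argue by strong induction on the ``height'' of $\ul{u}$ from above, which is well-defined because $\ul{\Gamma}$ is finite and every edge strictly increases dimension. By the compatibility between $E$ and $\dim$ in Definition \ref{definition.double_covers_graded_signed_graphs}, every ascending path has strictly increasing dimensions. Hence its length is bounded by $\max\dim - \min\dim$ over the finite set $\ul{X}$, and $\calP^\uparrow(\ul{u})$ is finite. In the same way, the recursion in Definition \ref{definition.leaf_root_path_function} is well-founded: we induct on $m(\ul{u}) = \max \dim - \dim(\ul{u})$, or equivalently on the maximal length of an ascending path starting at $\ul{u}$.

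\textbf{Base case.} Let $\ul{u}$ be a leaf. Then no $\ul{v} \supset \ul{u}$ exists, so the only ascending path from $\ul{u}$ is the length-zero path $(\ul{u})$. It ends at a leaf and is admissible by Remark \ref{remark.paths_of_length_zero_allowed}. Thus $|\calP^\uparrow(\ul{u})| = 1 = \LP(\ul{u})$.

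\textbf{Inductive step.} Let $\ul{u}$ be a non-leaf. We build a bijection
\[
\calP^\uparrow(\ul{u}) \;\longleftrightarrow\; \coprod_{\ul{v} \supset \ul{u}} \calP^\uparrow(\ul{v}),
\]
sending $(\ul{u}, \ul{w}_1, \dots, \ul{w}_n)$ to $(\ul{w}_1, \dots, \ul{w}_n)$, placed in the summand indexed by $\ul{v} = \ul{w}_1$. This is well-defined because every path in $\calP^\uparrow(\ul{u})$ has length at least one, since $\ul{u}$ is not a leaf and a length-zero path would have to end at $\ul{u}$. Its inverse prepends $\ul{u}$. Each $\ul{v} \supset \ul{u}$ has $\dim(\ul{v}) > \dim(\ul{u})$, so the induction hypothesis applies and gives $|\calP^\uparrow(\ul{v})| = \LP(\ul{v})$. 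Summing, $|\calP^\uparrow(\ul{u})| = \sum_{\ul{v}\supset\ul{u}} \LP(\ul{v}) = \LP(\ul{u})$. Here the sum runs over distinct vertices $\ul{v}$, which matches the definition because $\ul{E}$ has at most one edge $\ul{u}\subset\ul{v}$.

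\textbf{Dual statement.} For $\RP$, either repeat the argument with descending paths and induction on $\dim(\ul{u}) - \min\dim$, or apply the above to the graph with all edges reversed and grading negated, under which leaves and roots swap (cf.\ Remark \ref{remark.ascending_paths_descending_paths}).

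The only delicate point, rather than a real obstacle, is justifying well-foundedness and finiteness, which the strict increase of $\dim$ along edges provides. One also has to confirm that for a non-leaf no length-zero path is counted.
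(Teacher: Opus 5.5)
Your proof is correct and takes the same route as the paper: descending strong induction on dimension, splitting each ascending path according to its first step, and handling $\RP$ dually. Your base case covers every leaf, not only the leaves of maximal dimension, so it is slightly more careful than the paper's, which states explicitly only the case of a vertex of maximal dimension.
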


\begin{figure}[H]
    \centering
    \includegraphics[scale=0.8]{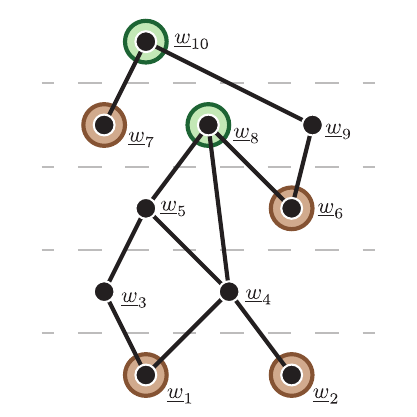}
    \caption{Example of quotient $\ul{\Gamma}$ of a double cover. Edges are unsigned (colored in black). Nodes are separated by dashed gray lines based on their dimension. The grading is not strong, since $\ul{w}_4 \subset \ul{w}_8$ and their dimensions are not consecutive integers. Roots are circled in brown, and leaves are circled in green.}
    \label{figure.LP_and_RP}
\end{figure}

\begin{table}[H]
    \centering
    \begin{tabular}{c|c|c||c|c|c}
        {node $\ul{w}$} & {$\LP(\ul{w})$} & {$\RP(\ul{w})$} & {node $\ul{w}$} & {$\LP(\ul{w})$} & {$\RP(\ul{w})$} \\
        \hline
        $\ul{w}_1$ & $ 3$ & $ 1$ & $ \ul{w}_6$ & $ 2$ & $ 1$ \\
        $\ul{w}_2$ & $ 2$ & $ 1$ & $ \ul{w}_7$ & $ 1$ & $ 1$ \\
        $\ul{w}_3$ & $ 1$ & $ 1$ & $ \ul{w}_8$ & $ 1$ & $ 6$ \\
        $\ul{w}_4$ & $ 2$ & $ 2$ & $ \ul{w}_9$ & $ 1$ & $ 1$ \\
        $\ul{w}_5$ & $ 1$ & $ 3$ & $ \ul{w}_{10}$ & $ 1$ & $ 2$ \\
    \end{tabular}
    \caption{Computation of the values $\LP(\ul{w})$ and $\RP(\ul{w})$ for $\ul{\Gamma}$ in Figure \ref{figure.LP_and_RP}. The values can be obtained via the definition of $\LP$ or Lemma \ref{lemma.number_of_paths_alternative_computation_leaf_root_path_function}. \\}
    \label{table.LP_and_RP}
\end{table}

\begin{proof}
    If $\ul{u}$ has maximal dimension, then it is a leaf, and therefore $\LP(\ul{u}) = 1 = |\calP^\uparrow(\ul{u})|$. By descending strong induction on $\dim(\ul{u})$, we then have that
    \[
    \LP(\ul{u}) = \sum_{\ul{v} \supset \ul{u}} \LP(\ul{v}) = \sum_{\ul{v} \supset \ul{u}} | \calP^\uparrow(\ul{v}) | = | \calP^\uparrow(\ul{u}) |,
    \]
    since the recursive property is satisfied for ascending paths by taking their first step. The proof is dual for $\RP(\ul{u})$.
\end{proof}

\begin{corollary}\label{corollary.product_LP_RP_is_root_to_leaf_paths}
    The product $\LP(\ul{u}) \cdot \RP(\ul{u})$ coincides with the number of root-to-leaf paths passing through $\ul{u}$.
\end{corollary}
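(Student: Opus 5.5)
The plan is to build an explicit bijection between root-to-leaf paths passing through $\ul{u}$ and pairs consisting of a descending path from $\ul{u}$ to a root and an ascending path from $\ul{u}$ to a leaf. The count then follows from Lemma \ref{lemma.number_of_paths_alternative_computation_leaf_root_path_function}. That lemma gives $|\calP^\downarrow(\ul{u})| = \RP(\ul{u})$ and $|\calP^\uparrow(\ul{u})| = \LP(\ul{u})$, so the target set has cardinality $\LP(\ul{u}) \cdot \RP(\ul{u})$.

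First I will make precise what ``passing through $\ul{u}$'' means. A root-to-leaf path $\eta = (\ul{w}_0, \dots, \ul{w}_n)$ passes through $\ul{u}$ if $\ul{w}_i = \ul{u}$ for some $i$. Since edges strictly increase dimension, along an ascending path the dimensions $\dim(\ul{w}_i)$ are strictly increasing. Hence each vertex appears at most once, and the index $i$ with $\ul{w}_i = \ul{u}$ is unique. This uniqueness is the one point needing care, because it makes the splitting map well defined and prevents a path from being counted twice.

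Next I define the splitting map. It sends $\eta$ to the pair $\big((\ul{w}_i, \ul{w}_{i-1}, \dots, \ul{w}_0), (\ul{w}_i, \dots, \ul{w}_n)\big)$. The first component is a descending path from $\ul{u}$ to the root $\ul{w}_0$, using Remark \ref{remark.ascending_paths_descending_paths}. The second is an ascending path from $\ul{u}$ to the leaf $\ul{w}_n$. Length-zero pieces are allowed by Remark \ref{remark.paths_of_length_zero_allowed}, which covers the cases where $\ul{u}$ is itself a root or a leaf.

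The inverse map concatenates the reversed descending path with the ascending path at $\ul{u}$. The result is an ascending path from a root to a leaf containing $\ul{u}$, so it lies in $\calP_{\RtoL}$ and passes through $\ul{u}$. The two maps are mutually inverse because the split index is unique. This gives the bijection and the product formula. The only mild obstacle is the uniqueness of the occurrence of $\ul{u}$, which the grading handles.
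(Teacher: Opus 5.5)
Your proposal is correct and follows the same route as the paper: the paper proves the corollary by combining Lemma \ref{lemma.number_of_paths_alternative_computation_leaf_root_path_function} with the natural bijection between root-to-leaf paths through $\ul{u}$ and $\calP^\uparrow(\ul{u}) \times \calP^\downarrow(\ul{u})$. Your check that $\ul{u}$ occurs at a unique index along any ascending path, because $\dim$ strictly increases along edges, supplies the detail the paper leaves implicit when it calls the bijection natural.
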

\begin{proof}
    This follows from Lemma \ref{lemma.number_of_paths_alternative_computation_leaf_root_path_function} and the fact that the set of root-to-leaf paths passing through $\ul{u}$ is naturally in bijection with the product
    \[
    \calP^\uparrow(\ul{u}) \times \calP^\downarrow(\ul{u}).
    \qedhere
    \]
\end{proof}
In Figure \ref{figure.LP_and_RP}, we provide an example of the structure of the quotient $\ul{\Gamma}$, and in Table \ref{table.LP_and_RP} we summarize the values of the leaf-path and root-path functions of $\Gamma$.

\section{Quotient and cover components}\label{section.components}

\begin{definition}\label{definition.isolated}
    We say that a node $\ul{u} \in \ul{X}$ is \textbf{isolated} if it is both a leaf and a root, and we extend the definition to a node $u \in X$ when its projection $\ul{u}$ is isolated. We denote by $\ul{X}^\iso$ and $X^\iso$ the sets of isolated nodes of $\ul{\Gamma}$ and $\Gamma$, respectively. We say that a pair $\{u, u'\}$ of isolated nodes of $\Gamma$ is an \textbf{involutory pair of isolated nodes} if $u'=-u$.
\end{definition}

\begin{definition}\label{definition.quotient_component}
    A \textbf{quotient-component} $\ul{C}$ is a non-empty subset of $\ul{X}$ that is a connected component of $\ul{\Gamma}$. This notion includes isolated nodes of $\ul{\Gamma}$.
\end{definition}

\begin{remark}\label{remark.components_O}
    The quotient-components of $\ul{\Gamma}$ are in bijection with the connected components of the signed graph $\calO(\ul{\Gamma})$, for any orientation $\calO \colon \ul{\Gamma} \to \Gamma$, through the orientation map $\calO$ itself.
\end{remark}

\begin{definition}\label{definition.cover_component}
    A \textbf{cover-component} $C$ is either a non-empty subset of $X \setminus X^\iso$ that is a connected component of $\Gamma$, or an involutory pair of isolated nodes in $X^\iso$.
\end{definition}

In the following definitions of this section, we assume that the grading $\dim$ is strong.

\begin{definition}\label{definition.up-adjacent}
    Two nodes $\ul{u}, \ul{u'} \in \ul{X}_k$ with $\ul{u} \neq \ul{u'}$ of the same dimension are said to be \textbf{up-adjacent} if there is a $\ul{v} \in \ul{X}_{k+1}$ such that $\ul{u}, \ul{u'} \subset \ul{v}$. In this case, we write $\ul{u} \sim^\up \ul{u'}$. If we drop the condition $\ul{u} \neq \ul{u'}$, we write instead $\ul{u} \sim^\up_= \ul{u'}$, that is: $\ul{u}$ and $\ul{u'}$ are either up-adjacent, or $\ul{u} = \ul{u'}$ is not a leaf. The definitions of up-adjacent extends analogously to the double cover $\Gamma$.
\end{definition}

\begin{definition}\label{definition.down-adjacent}
    Two nodes $\ul{u}, \ul{u'} \in \ul{X}_k$ with $\ul{u} \neq \ul{u'}$ of the same dimension are said to be \textbf{down-adjacent} if there is a $\ul{t} \in \ul{X}_{k-1}$ such that $\ul{u}, \ul{u'} \supset \ul{t}$. In this case, we write $\ul{u} \sim^\down \ul{u'}$. If we drop the condition $\ul{u} \neq \ul{u'}$, we write instead $\ul{u} \sim^\down_= \ul{u'}$, that is: $\ul{u}$ and $\ul{u'}$ are either down-adjacent, or $\ul{u} = \ul{u'}$ is not a root. The definition of down-adjacent extends analogously to the double cover $\Gamma$.
\end{definition}

\begin{definition}\label{up-connected}
    A non-empty subset $\ul{C}_k$ of $\ul{X}_k$ is said to be \textbf{up-connected} (in dimension $k$) if, for any $\ul{u}, \ul{u'} \in \ul{C}_k$ with $\ul{u} \neq \ul{u'}$, there exists a sequence of vertices $\ul{u} = \ul{u}_0, \ul{u}_1, \dots, \ul{u}_n = \ul{u'}$ all belonging to $\ul{C}_k$ such that $\ul{u}_{i-1} \sim^\up \ul{u}_i$ for all $i=1, \dots, n$. Notice that singletons (whether they are leaves or not) constitute up-connected subsets of $\ul{X}_k$. The definition of up-connected extends analogously to the double cover $\Gamma$.
\end{definition}

\begin{definition}\label{definition.down-connected}
    A non-empty subset $\ul{C}_k$ of $\ul{X}_k$ is said to be \textbf{down-connected} (in dimension $k$) if, for any $\ul{u}, \ul{u'} \in \ul{C}_k$ with $\ul{u} \neq \ul{u'}$, there exists a sequence of vertices $\ul{u} = \ul{u}_0, \ul{u}_1, \dots, \ul{u}_n = \ul{u'}$ all belonging to $\ul{C}_k$ such that $\ul{u}_{i-1}, \ul{u}_i$ are down-adjacent for all $i=1, \dots, n$. Notice that singletons (whether they are roots or not) constitute down-connected subsets of $\ul{X}_k$. The definition of down-connected extends analogously to the double cover $\Gamma$.
\end{definition}

\begin{definition}\label{definition.k-quotient_up_component}
    A \textbf{quotient-up-component} $\ul{C}_k$ (in dimension $k$) is a non-empty subset of $\ul{X}_k$ that is maximally up-connected, that is: it is up-connected, and not properly contained in any other up-connected subset of $\ul{X}_k$. This notion includes the leaves of $\ul{\Gamma}$.
\end{definition}

\begin{definition}\label{definition.k-cover_up_component}
    A \textbf{cover-up-component} $C_k$ (in dimension $k$) is either a non-empty subset of $X_k \setminus \rmL(\Gamma)$ that is maximally up-connected, or an involutory pair in $X_k \cap \rmL(\Gamma)$.
\end{definition}

\begin{definition}\label{definition.k-coherent_up_component}
    We say that a quotient-up-component $\ul{C}_k$ is a \textbf{coherent-up-component} (in dimension $k$) if it is not a leaf, and there exists an orientation $\calO \colon \ul{\Gamma} \to \Gamma$ with the property that, for any $u, u' \in \calO(\ul{C}_k)$, and for any $v \in X_{k+1}$ such that $u, u' \subset v$, it holds that $[v : u] = [v : u']$.
\end{definition}

\begin{definition}\label{definition.k-quotient_down_component}
    A \textbf{quotient-down-component} $\ul{C_k}$ (in dimension $k$) is a non-empty subset of $\ul{X}_k$ that is maximally down-connected, that is: it is down-connected, and not properly contained in any other down-connected subset of $\ul{X}_k$. This notion includes the roots of $\ul{\Gamma}$.
\end{definition}

\begin{definition}\label{definition.k-cover_down_component}
    A \textbf{cover-down-component} $C_k$ (in dimension $k$) is either a non-empty subset of $X_k \setminus \rmR(\Gamma)$ that is maximally down-connected, or an involutory pair in $X_k \cap \rmR(\Gamma)$.
\end{definition}

\begin{definition}\label{definition.k-coherent_down_component}
    We say that a quotient-down-component $\ul{C}_k$ in dimension $k$ is a \textbf{coherent-down-component} (in dimension $k$) if it is not a root, and there exists an orientation $\calO \colon \ul{\Gamma} \to \Gamma$ with the property that, for any $u, u' \in \calO(\ul{C}_k)$, and for any $t \in X_{k-1}$ such that $u, u' \supset t$, it holds that $[u : t] = [u' : t]$.
\end{definition}

\begin{figure}[H]
    \centering
    \includegraphics[scale=0.75]{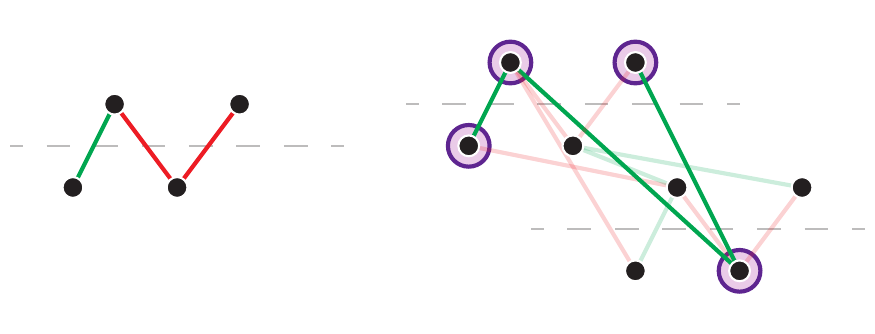}
    \caption{Example of a coherent-up-component in dimension $k-1$ and a coherent-down-component in dimension $k$. On the left, the original graded signed graph. On the right, its associated double cover. Positively signed edges are in green, and negatively signed edges are in red. Nodes below a gray dashed line have dimension $k-1$, nodes above such a line have dimension $k$. The choice of an orientation $\calO \colon \ul{\Gamma} \to \Gamma$ as described in Remark \ref{remark.adjustment_orientation} is highlighted by circling the nodes belonging to $\calO(\ul{\Gamma})$ in purple. Opaque edges do not belong to $\calO(\ul{\Gamma})$.}
    \label{figure.coherent}
\end{figure}

\begin{remark}\label{remark.on_defintion_coherent}
    Notice that, in Definitions \ref{definition.k-coherent_up_component} and \ref{definition.k-coherent_down_component}, even though $\ul{C}_k$ is a subset of the set of nodes $\ul{X}_k$ of the quotient $\ul{\Gamma}$, the definitions depend in fact on the orientation $\calO$, and, ultimately, on the signed structure of $\Gamma$.
\end{remark}

\begin{remark}\label{remark.adjustment_orientation}
    It is worth noticing that, in the definitions of coherent-up- and -down-components, it is only relevant how $\calO$ is defined on $\ul{X}_k$, and not on nodes of other dimensions. Moreover, given a coherent-up-component $\calO(\ul{C}_k)$, it is possible to adjust $\calO$ on $\ul{X}_{k+1}$ so that for any $u \in \calO(\ul{C}_k)$, and for any $v \in \calO(\ul{X}_{k+1})$ such that $u \subset v$, then $[v : u] = 1$, that is, $u$ and $v$ are coherently oriented. This justifies the choice of terminology. A similar adjustment of $\calO$ is possible on $\ul{X}_{k-1}$ in the case of a coherent-down-component.
\end{remark}
\begin{remark}\label{remark.automatic_coherent}
    A quotient-up- or -down-component containing only one node is automatically coherent.
\end{remark}
We provide a visual example of coherent components in Figure \ref{figure.coherent}. The following result relates these different notions of components of $\Gamma$ and $\ul{\Gamma}$:
\begin{lemma}\label{lemma.components_of_Gamma}
    \begin{enumerate}
        \item The quotient-components of $\ul{\Gamma}$ are in bijection with the cover-components of $\Gamma$ via the projection map $\ul{ \ \cdot \ }$.
        \vspace{0.1cm}
    \end{enumerate}
    Suppose also that the grading $\dim$ is strong. Then:
    \vspace{-0.1cm}
    \begin{enumerate}
        \item[2.] For any $k$, the quotient-up-components in dimension $k$ of $\ul{\Gamma}$ are in bijection with the cover-up-components in dimension $k$ of $\Gamma$ via the projection map $\ul{ \ \cdot \ }$. The statement also holds when replacing ``up'' with ``down''.
        \item[3.] If $\ul{C}_k$ is a quotient-down-component in dimension $k$ that is not a root, then the set
        \[
        \ul{C}_{k-1} = \Set{ \ul{t} | \ul{t} \subset \ul{u} \text{ for some } \ul{u} \in \ul{C}_k }
        \]
        is a quotient-up-component in dimension $k-1$ that is not a leaf. This set association, together with the inverse association
        \[
        \ul{C}_k = \Set{ \ul{u} | \ul{u} \supset \ul{t} \text{ for some } \ul{t} \in \ul{C}_{k-1}},
        \]
        is a bijection between quotient-down-components in dimension $k$ that are not roots and quotient-up-components in dimension $k-1$ that are not leaves. Similarly, cover-down-components in dimension $k$ that are not involutory pairs of roots are in bijection with cover-up-components in dimension $k-1$ that are not involutory pairs of leaves.
        \item[4.] The bijection in 3 restricts to a bijection between coherent-down-components in dimension $k$ and coherent-up-components in dimension $k-1$.
    \end{enumerate}
\end{lemma}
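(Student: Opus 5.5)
Each part reduces to a single structural observation, and I will prove the four parts in order. The common tool is the following fact, immediate from the compatibility between $E$ and $-$ in Definition \ref{definition.double_covers_graded_signed_graphs}: any path in $\ul{\Gamma}$ (along edges taken in either direction, or along $\sim^\up$/$\sim^\down$ relations) lifts to $\Gamma$ starting from either preimage of its initial vertex. Indeed, if $\ul{u}\subset\ul{v}$ then $u\subset v$, $u\subset -v$, $-u\subset v$ and $-u\subset -v$. Conversely, the projection $\ul{\ \cdot\ }$ sends edges to edges and adjacencies to adjacencies.

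\emph{Part 1.} Take a connected component $C$ of $\Gamma$ avoiding $X^\iso$. By the two facts above, $\ul{C}$ is a connected component of $\ul{\Gamma}$. I will show $C$ is the full preimage of $\ul{C}$. Pick $u\in C$. It is not isolated, so it has some neighbor $v$, and then $u - v - (-u)$ is a path, which gives $-u\in C$. Lifting paths then shows every preimage of $\ul{C}$ lies in $C$. Hence the map $C\mapsto \ul{C}$ is injective, with inverse given by taking the preimage. Isolated quotient nodes correspond exactly to involutory pairs of isolated nodes. This gives the bijection.

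\emph{Part 2.} The argument is identical with $\sim^\up$ in place of adjacency. For $u$ not in $\rmL(\Gamma)$, pick $v\supset u$. Then $u,-u\subset v$, so $u\sim^\up -u$, and up-connected components are closed under $-$. Leaves give exactly the involutory pairs. The down case is dual, using a $t\subset u$ with $u$ not a root.

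\emph{Part 3.} Write $\ul{C}_k$ for the component and $\ul{C}_{k-1}$ for the set in the statement. First, $\ul{C}_{k-1}$ is up-connected. Two faces $\ul{t}\subset\ul{u}$ and $\ul{t'}\subset\ul{u'}$ with $\ul{u},\ul{u'}\in\ul{C}_k$ are joined by a down-path $\ul{u}=\ul{u}_0,\dots,\ul{u}_n=\ul{u'}$. By strong grading, each consecutive pair shares a face $\ul{s}_i$ of dimension $k-1$, and $\ul{s}_i\in\ul{C}_{k-1}$. Consecutive elements of the chain $\ul{t},\ul{s}_1,\dots,\ul{s}_n,\ul{t'}$ share a coface in $\ul{C}_k$, hence are up-adjacent or equal. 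Next, $\ul{C}_{k-1}$ is maximal. If $\ul{t}\in\ul{C}_{k-1}$ and $\ul{t}\sim^\up\ul{t''}$ via some $\ul{w}\supset \ul{t},\ul{t''}$, then $\ul{w}$ is down-adjacent or equal to the coface $\ul{u}\in\ul{C}_k$ of $\ul{t}$. So $\ul{w}\in\ul{C}_k$ and $\ul{t''}\in\ul{C}_{k-1}$. Elements of $\ul{C}_{k-1}$ have cofaces, so $\ul{C}_{k-1}$ is not a leaf. The symmetric argument shows the inverse association produces a down-component that is not a root. The two compositions are the identity: taking faces of $\ul{C}_k$ and then their cofaces recovers $\ul{C}_k$ by maximality, and symmetrically in the other direction. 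The cover version follows from the quotient version via Part 2.

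\emph{Part 4.} This is the main obstacle, because coherence involves signs and the paired components live in different dimensions. The plan is to translate both conditions into one condition on the bipartite signed graph between $\ul{C}_k$ and $\ul{C}_{k-1}$. Given an orientation, call it \emph{coherent} if every edge between $\calO(\ul{C}_k)$ and $\calO(\ul{C}_{k-1})$ has signature $1$.

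Suppose $\ul{C}_{k-1}$ is coherent-up. Adjust $\calO$ on $\ul{X}_k$ as in Remark \ref{remark.adjustment_orientation}. This is well defined because each $\ul{u}\in\ul{C}_k$ has a face in $\ul{C}_{k-1}$, and all faces of $\ul{u}$ in $\ul{C}_{k-1}$ see the same sign by coherence. The resulting orientation is coherent. Then any $u,u'\in\calO(\ul{C}_k)$ sharing a face $t\in\calO(\ul{X}_{k-1})$ satisfy $[u:t]=1=[u':t]$. If instead $t=-s$ with $s\in\calO(\ul{X}_{k-1})$, both signatures flip together and remain equal. So $\ul{C}_k$ is coherent-down. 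The converse is symmetric.

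Both directions depend on knowing that every face (respectively coface) involved in the coherence conditions lies in the paired component. Part 3 provides exactly this.
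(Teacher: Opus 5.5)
Your proof is correct and follows the paper's route: projection and path-lifting for Parts 1--2, mutually inverse face/coface associations for Part 3, and the orientation adjustment of Remark~\ref{remark.adjustment_orientation} for Part 4. You also supply details the paper's short proof leaves implicit, such as closure of non-isolated components under $-$, maximality, and well-definedness of the adjusted orientation. One imprecision is harmless: in Part 2 the projection sends $u \sim^\up -u$ to an equality rather than an adjacency, so it maps up-adjacency to ``up-adjacent or equal'', which is all the connectivity argument needs.
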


\begin{proof}
    The projection $\ul{ \ \cdot \ }$ maps connected components that are not involutory pairs of isolated nodes of $\Gamma$ to connected components of $\ul{\Gamma}$ that are not isolated nodes. Moreover, it maps involutory pairs of isolated nodes to isolated nodes. This proves 1. The same argument extends to up- and down-components, and this proves 2. For 3, it is enough to notice that the prescribed associations are one the inverse of the other. Finally, 4 follows from Remark \ref{remark.adjustment_orientation}.
\end{proof}

\section{Rules of the root-to-leaf path random walk}\label{section.rules_random_walk}

We now describe the one-step rule of the \textbf{root-to-leaf path random walk} on the double cover of a graded signed graph. Suppose a walker is at node $u \in X$. Then:
\begin{itemize}
    \item If $\ul{u}$ is both a leaf and a root, then the walker performs action \bfS with probability $1$.
    \item If $\ul{u}$ is a leaf but not a root, then the walker performs either action \bfS or action \bfD, each with probability $1/2$.
    \item If $\ul{u}$ is not a leaf but it is a root, then the walker performs either action \bfU or action \bfS, each with probability $1/2$.
    \item If $\ul{u}$ is neither a leaf nor a root, then the walker performs action \bfU or action \bfD, each with probability $1/2$.
\end{itemize}
We now describe actions \bfS, \bfU, \bfD:
\begin{itemize}
    \item \bfS (same): the walker either stays at node $u$, or moves to $-u$. Each choice has a probability of $1/2$.
    \item \bfU (up): the walker chooses a node $\ul{v} \in \ul{X}$ with $\ul{v} \supset \ul{u}$, and moves to the corresponding unique $v \in X$ satisfying $[v : u] = 1$. Nodes $\ul{v}$ are chosen accordingly to odds prescribed by the values of the leaf-path function $\LP(\ul{v})$. Equivalently, node $\ul{v}$ is chosen with probability $\LP(\ul{v}) / \LP(\ul{u})$.
    \item \bfD (down): the walker chooses a node $\ul{t} \in \ul{X}$ with $\ul{t} \subset \ul{u}$, and moves to the corresponding unique $t \in X$ satisfying $[u : t] = -1$. Nodes $\ul{t}$ are chosen accordingly to odds prescribed by the values of the root-path function $\RP(\ul{t})$. Equivalently, node $\ul{t}$ is chosen with probability $\RP(\ul{t}) / \RP(\ul{u})$.
\end{itemize}
The following result justifies the terminology and gives an alternative description of the random walk:
\begin{proposition}\label{proposition.equivalent_description_random_walk}
    The root-to-leaf path random walk on $\Gamma$ can be equivalently described as follows. Suppose that the walker is at node $u \in X$. The walker chooses uniformly at random a root-to-leaf path $\eta$ passing through $\ul{u}$. Then, the walker chooses, as described previously, one of the actions 
    \bfS, \bfU', \bfD' to perform, according to whether or not $\ul{u}$ is a leaf or a root. Here, actions \bfU' and \bfD' replace actions \bfU and \bfD, respectively, and are defined as:
    \begin{itemize}
    \item \bfU': let $\ul{v} \supset \ul{u}$ be obtained by walking one step up along $\eta$ from $\ul{u}$. Then, the walker moves to the corresponding unique $v \in X$ satisfying $[v : u] = 1$.
    \item \bfD': let $\ul{t} \subset \ul{u}$ be obtained by walking one step down along $\eta$ from $\ul{u}$. Then, the walker moves to the corresponding unique $t \in X$ satisfying $[u : t] = -1$.
    \end{itemize}
\end{proposition}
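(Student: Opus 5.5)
The strategy is to compare one-step transition probabilities directly. Both descriptions choose among \bfS, \bfU, \bfD with the same probabilities depending only on whether $\ul{u}$ is a leaf and/or a root. The new description first draws a uniform root-to-leaf path $\eta$ through $\ul{u}$, and we may condition on the chosen action. For \bfS the path $\eta$ is irrelevant, so it suffices to show two things. Conditionally on \bfU', the upper neighbour $\ul{v}$ obtained from $\eta$ has law $\LP(\ul{v})/\LP(\ul{u})$. Conditionally on \bfD', the lower neighbour $\ul{t}$ has law $\RP(\ul{t})/\RP(\ul{u})$. The lift to $X$ is then determined in the same way in both descriptions, by the signature condition $[v:u]=1$ or $[u:t]=-1$. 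This lift is unique by the compatibility between $[\ :\ ]$ and $-$ in Definition \ref{definition.double_covers_graded_signed_graphs}.

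The key step is the path count. By Corollary \ref{corollary.product_LP_RP_is_root_to_leaf_paths}, the root-to-leaf paths through $\ul{u}$ are in bijection with $\calP^\uparrow(\ul{u}) \times \calP^\downarrow(\ul{u})$, and there are $\LP(\ul{u})\RP(\ul{u})$ of them. The ascending part splits according to its first step $\ul{v}\supset\ul{u}$, and for fixed $\ul{v}$ it is in bijection with $\calP^\uparrow(\ul{v})$. By Lemma \ref{lemma.number_of_paths_alternative_computation_leaf_root_path_function}, the number of root-to-leaf paths through $\ul{u}$ whose next vertex above $\ul{u}$ is $\ul{v}$ is therefore $\LP(\ul{v})\RP(\ul{u})$. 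Dividing by $\LP(\ul{u})\RP(\ul{u})$ gives probability $\LP(\ul{v})/\LP(\ul{u})$, which matches action \bfU. The dual argument, splitting the descending part by its first step $\ul{t}\subset\ul{u}$, gives $\RP(\ul{t})/\RP(\ul{u})$ and matches \bfD.

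The main point needing care is that \bfU' and \bfD' are well defined whenever they are invoked. Action \bfU' is only chosen when $\ul{u}$ is not a leaf. In that case every path in $\calP^\uparrow(\ul{u})$ has positive length: a length-zero path (Remark \ref{remark.paths_of_length_zero_allowed}) would end at $\ul{u}$ itself, which would then have to be a leaf. So the step up along $\eta$ exists. The same holds for \bfD' when $\ul{u}$ is not a root. It also matters that the choice of $\eta$ is independent of the choice of action, so conditioning on the action leaves $\eta$ uniform. With these observations the two transition kernels agree for every $u\in X$, and hence the processes coincide.
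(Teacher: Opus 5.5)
Your proof is correct and takes essentially the paper's route. Both arguments count the paths through $\ul{u}$ whose first step up is $\ul{v}$ and divide by the total. The paper counts directly in $\calP^\uparrow(\ul{u})$, using the up/down independence of Remark~\ref{remark.root_to_leaf_path_can_be_sampled_up_down_independently}; you count full root-to-leaf paths and let the factor $\RP(\ul{u})$ cancel. Your checks that \bfU' and \bfD' are well defined and that the action choice is independent of $\eta$ are points the paper leaves implicit.
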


\begin{remark}\label{remark.root_to_leaf_path_can_be_sampled_up_down_independently}
    Instead of choosing uniformly at random a root-to-leaf path $\eta$ through $\ul{u}$, the walker might first decide to walk either up or down (when possible), and then only uniformly sample an ascending path $\eta \in \calP^\uparrow(\ul{u})$ when moving up, or a descending path $\eta \in \calP^\downarrow(\ul{u})$ when moving down. Indeed, selecting uniformly at random a root-to-leaf path $\eta$ through $\ul{u}$ is equivalent to sampling uniformly and independently an ascending path in $\calP^\uparrow(\ul{u})$ and a descending path in $\calP^\downarrow(\ul{u})$.
\end{remark}
\begin{proof}
    Suppose that $\ul{u}$ is not a leaf, and let us uniformly sample an ascending path $\eta \in \calP^\uparrow(\ul{u})$. There is a total of $\LP(\ul{u})$ of such paths, by Lemma \ref{lemma.number_of_paths_alternative_computation_leaf_root_path_function}. Of these, exactly $\LP(\ul{v})$ have $\ul{v} \supset \ul{u}$ as node obtained by walking one step up along $\eta$ from $\ul{u}$. Thus, the probability of selecting $\ul{v}$ in this alternative description is $\LP(\ul{v})/\LP(\ul{u})$. Replicating the argument when walking down, it follows that the two descriptions of the root-to-leaf path random walk on $\Gamma$ are equivalent.
\end{proof}

To conclude this section, we provide an explicit expression for the $2N \times 2N$ transition matrix $P^\Gamma$ of the root-to-leaf path random walk on $\Gamma$. Given $u, u' \in X$,
\[
P^\Gamma_{uu'} =
\begin{cases}
    1/2 \cdot \LP(\ul{v})/\LP(\ul{u}) & \text{if $u \subset u' = v$ and $[v : u] = 1$,} \\
    1/2 \cdot \RP(\ul{t})/\RP(\ul{u}) & \text{if $u \supset u' = t$ and $[u : t] = -1$,} \\
    1/4 & \text{if $u = \pm u'$ and $\ul{u}=\ul{u'}$ is a leaf xor a root,} \\
    1/2 & \text{if $u = \pm u'$ and $\ul{u}=\ul{u'}$ is a leaf and a root,} \\
    0 & \text{otherwise.}
\end{cases}
\]
By ``xor'' we mean ``exclusive or''. The root-to-leaf path random walk on the double cover $\Gamma$ induces, via the quotient map $\ul{ \ \cdot \ }$, a random walk on the involutory quotient $\ul{\Gamma}$. The $N \times N$ transition matrix $P^{\ul{\Gamma}}$ is given by
\[
P^{\ul{\Gamma}}_{\ul{u}\ul{u'}} =
\begin{cases}
    1/2 \cdot \LP(\ul{v})/\LP(\ul{u}) & \text{if $\ul{u} \subset \ul{u'} = \ul{v}$,} \\
    1/2 \cdot \RP(\ul{t})/\RP(\ul{u}) & \text{if $\ul{u} \supset \ul{u'} = \ul{t}$,} \\
    1/2 & \text{if $\ul{u} = \ul{u'}$ is a leaf xor a root,} \\
    1 & \text{if $\ul{u} = \ul{u'}$ is a leaf and a root,} \\
    0 & \text{otherwise.}
\end{cases}
\]

\section{The stationary distribution}\label{section.stationary_distribution}

We first explore irreducibility and aperiodicity of the root-to-leaf path random walk, on both the double cover $\Gamma$ and the quotient $\ul{\Gamma}$.
\begin{proposition}\label{proposition.existence_uniqueness_stationary_distributions}
    The root-to-leaf path random walk splits, as a random process, into distinct random walks on the cover-components. On each of these cover-components, the random walk is irreducible and aperiodic. As a consequence, each cover-component induces a unique stationary distribution of the random walk supported on it, and each initial probability distribution supported on it converges to such stationary distribution. The same statements hold when replacing ``cover'' by ``quotient''.
\end{proposition}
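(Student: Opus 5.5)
The plan is to prove the three claims in order: the walk splits along cover-components, it is irreducible on each, and it is aperiodic on each. The Perron--Frobenius / ergodic theorem for finite Markov chains then gives existence, uniqueness and convergence. The quotient case follows the same way, or by pushing forward through the projection $\ul{\ \cdot\ }$ using Lemma \ref{lemma.components_of_Gamma}(1).

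\emph{Invariance.} From the explicit form of $P^\Gamma$, a positive transition $u\to u'$ occurs in only three situations: $u\subset u'$, $u\supset u'$, or $u'=\pm u$. In the first two, $u$ and $u'$ are joined by an edge of $E$ and so lie in the same connected component of $\Gamma$. In the third, $u'=-u$ with $P^\Gamma_{u,-u}>0$, which happens only when $\ul u$ is a leaf or a root.
\begin{itemize}
\item If $\ul u$ is isolated, $\{u,-u\}$ is by definition a cover-component, so the move stays inside it.
\item Otherwise $u$ has a neighbour $w$ (say $u\subset w$). By the compatibility of $E$ with $-$, we also have $-u\subset w$. Hence $u$ and $-u$ lie in the same connected component of $\Gamma$.
\end{itemize}
So every cover-component is closed under the dynamics, and $P^\Gamma$ is block diagonal with respect to the partition of $X$ into cover-components.

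\emph{Irreducibility.} This is the step needing care, because a single edge $u\subset v$ is only traversable upward when $[v:u]=1$ and only downward when $[v:u]=-1$. Take an arbitrary edge $u\subset v$ in a non-isolated component.
\begin{itemize}
\item If $[v:u]=1$, the walker goes $u\to v$ with positive probability via \bfU, since $\LP(\ul v)\ge1$.
\item If $[v:u]=-1$, the walker goes $v\to u$ via \bfD.
\end{itemize}
To reverse a step, use the signature compatibilities. From $v$ we can go down to the unique $t$ over $\ul u$ with $[v:t]=-1$. Since $[v:u]=1$, this $t$ is $-u$. Now $\ul u$ is not a leaf, so from $-u$ the walker can go up to the node over $\ul v$ coherently oriented with $-u$, namely $-v$. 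Then going down from $-v$ reaches $u$, because $[-v:u]=-[v:u]=-1$. Thus $u\to v\to -u\to -v\to u$ is a cycle of positive-probability moves, and every edge of $\Gamma$ lies on such a cycle, with the analogous argument when $[v:u]=-1$. It follows that adjacency in $\Gamma$ implies mutual reachability, so the chain restricted to a connected component is irreducible. For an isolated pair, \bfS moves between $u$ and $-u$ with probability $1/2$ each.

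\emph{Aperiodicity.} Every non-isolated connected component is finite and graded, so it contains a node $u$ with $\ul u$ of maximal dimension in that component, and such a $\ul u$ is a leaf. From $u$, action \bfS is chosen with probability $1/2$, and within it the walker stays at $u$ with probability $1/2$, so $P^\Gamma_{uu}\ge 1/4>0$. A self-loop in an irreducible chain gives period $1$. Isolated pairs have $P_{uu}=1/2$.

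\emph{Conclusion.} Finite irreducible aperiodic chains have a unique stationary distribution, and every initial distribution converges to it. For $\ul\Gamma$ the same three steps apply, and they are simpler because there are no signs: every edge is traversable both up and down since $\LP,\RP\ge1$, and leaves carry positive diagonal entries.

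The main obstacle is the irreducibility step. One has to check carefully, using the involution and signature compatibilities, that the sign restrictions never block reversal of a move.
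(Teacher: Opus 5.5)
Your proof is correct and follows the same route as the paper: you split the walk along cover-components, show irreducibility on each, get aperiodicity from the lazy self-loop at a leaf or root, and then apply the fundamental theorem of finite Markov chains, with the quotient handled the same way. The only difference is in the irreducibility step: the paper appeals to positivity of $\LP,\RP$ together with the orientation flip of action \textbf{S} at leaves and roots, whereas your cycle $u\to v\to -u\to -v\to u$ derives two-way traversability of every edge directly from the signature compatibilities, which is more explicit and shows the \textbf{S}-flip is only needed for isolated pairs.
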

\begin{proof}
    The random walk clearly splits into distinct random walks on connected components and involutory pairs of isolated nodes of $\Gamma$. As the values of $\LP(\ul{u})$ and $\RP(\ul{u})$ are strictly positive for any $u \in X$, and since the probability of flipping orientation from $u$ to $-u$ when $u$ is a leaf or a root of a component is strictly positive, the probability of reaching in finite time a node $w'$ from a node $w$ in the same cover-component is positive. Therefore, the restriction of the random walk to each cover-component is irreducible. Moreover, the walker has a positive probability of being lazy when they find themselves at a node in $\rmL(\Gamma)$ or $\rmR(\Gamma)$, that is, they have positive probability of not moving from $u$ in one step. This implies that the random walk is aperiodic. The last statement follows from the fundamental theorem of Markov chains \cite[Chapter 5.3]{beichelt2002stochastic}. The same argument applies to the quotient $\ul{\Gamma}$.
\end{proof}
\begin{proposition}\label{proposition.stationary_distribution_is_induced}
    The stationary probability distribution $\pi_C$ associated with a cover-component $C$ of $\Gamma$ is induced by the stationary probability distribution $\pi_{\ul{C}}$ associated with its projection $\ul{C}$ in $\ul{\Gamma}$, via the relation
    \[
    \pi_C(u) = \frac{\pi_{\ul{C}}(\ul{u})}{2}.
    \]
    Moreover, the same is true for a generic stationary distribution $\pi$ on $\Gamma$.
\end{proposition}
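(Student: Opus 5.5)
The plan is to verify directly that the candidate $\mu(u) := \pi_{\ul{C}}(\ul{u})/2$, supported on $C$, is stationary for $P^\Gamma$. Uniqueness from Proposition \ref{proposition.existence_uniqueness_stationary_distributions} then forces $\pi_C = \mu$. The normalization is immediate: each $\ul{u} \in \ul{C}$ has exactly two preimages $u, -u$ in $C$ (Lemma \ref{lemma.components_of_Gamma}, part 1), so $\sum_{u \in C} \mu(u) = \sum_{\ul{u} \in \ul{C}} \pi_{\ul{C}}(\ul{u}) = 1$.

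The key structural fact is a symmetry of the transition matrix under the involution:
\[
P^\Gamma_{uu'} + P^\Gamma_{u(-u')} = P^{\ul{\Gamma}}_{\ul{u}\ul{u'}} \quad \text{and} \quad P^\Gamma_{(-u)(-u')} = P^\Gamma_{uu'}.
\]
I would check both identities case by case from the explicit formulas.

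\begin{itemize}
    \item \textbf{Up-moves.} For $\ul{u} \subset \ul{v}$, exactly one of $v, -v$ satisfies $[\cdot : u] = 1$, since $[-v : u] = -[v:u]$. Summing the two entries therefore gives $\tfrac12 \LP(\ul{v})/\LP(\ul{u})$. Moreover $[-v:-u] = [v:u]$ gives the second identity.
    \item \textbf{Down-moves.} The argument is the same, using the $\RP$ weights instead.
    \item \textbf{Same-moves.} The entries $1/4 + 1/4$ or $1/2 + 1/2$ add up to the corresponding quotient entries $1/2$ or $1$.
\end{itemize}

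The second identity also yields the column version: for fixed $u'$, summing over the preimages of $\ul{u}$ gives
\[
P^\Gamma_{uu'} + P^\Gamma_{(-u)u'} = P^\Gamma_{uu'} + P^\Gamma_{u(-u')} = P^{\ul{\Gamma}}_{\ul{u}\ul{u'}}.
\]

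Stationarity then follows by grouping the sum over $u \in C$ into involutory pairs:
\[
(\mu P^\Gamma)(u') = \sum_{\ul{u}} \frac{\pi_{\ul{C}}(\ul{u})}{2}\big(P^\Gamma_{uu'} + P^\Gamma_{(-u)u'}\big) = \frac12 \sum_{\ul{u}} \pi_{\ul{C}}(\ul{u}) P^{\ul{\Gamma}}_{\ul{u}\ul{u'}} = \frac{\pi_{\ul{C}}(\ul{u'})}{2} = \mu(u').
\]
This works because $\mu(u) = \mu(-u)$ is constant on each pair.

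For a generic stationary distribution $\pi$ on $\Gamma$, I would use Proposition \ref{proposition.existence_uniqueness_stationary_distributions}: the chain splits into cover-components, each with a unique stationary law. Hence $\pi = \sum_C \alpha_C \pi_C$ with $\alpha_C = \pi(C)$. Applying the first part to each $\pi_C$ gives
\[
\pi(u) = \tfrac12 \sum_{\ul{C}} \alpha_{C} \, \pi_{\ul{C}}(\ul{u}) = \tfrac12\, \ul{\pi}(\ul{u}),
\]
where $\ul{\pi}$ is the corresponding stationary distribution on $\ul{\Gamma}$. This uses the bijection between cover- and quotient-components.

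I expect the main obstacle to be bookkeeping rather than anything deep. Specifically, one must confirm that the cases of $P^\Gamma$ partition correctly: $u'$ cannot simultaneously be an up-neighbour and equal to $\pm u$, thanks to the grading. One must also check the isolated-pair case, where $C = \{u, -u\}$ and $P^\Gamma$ restricted to $C$ is the matrix with all entries $1/2$, so the uniform distribution $1/2, 1/2$ is its stationary distribution.
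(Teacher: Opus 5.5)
Your proof is correct, but it runs in the opposite direction from the paper's.

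\textbf{The paper's route.} It starts from $\pi_C$. It notes that the involution matrix $R$ is symmetric and commutes with $P^\Gamma$, so $R\pi_C$ is again a stationary distribution supported on $C$. Uniqueness then forces $R\pi_C=\pi_C$. An involution-invariant distribution on $C$ is declared to be induced from the quotient, with the factor $2$ coming from normalization. The generic case follows by convex combination, exactly as in your last step.

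\textbf{Your route.} You start from $\pi_{\ul{C}}$, lift it, and verify stationarity directly. You use the lumping identity $P^\Gamma_{uu'}+P^\Gamma_{u(-u')}=P^{\ul{\Gamma}}_{\ul{u}\ul{u'}}$ together with $P^\Gamma_{(-u)(-u')}=P^\Gamma_{uu'}$. The latter is just the entrywise form of $RP^\Gamma R=P^\Gamma$, i.e.\ the paper's commutation relation.

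\textbf{What each buys.} The paper's argument is shorter and isolates the involution symmetry as the essential mechanism. However, its final step tacitly relies on your lumping identity and on uniqueness on the quotient: it needs that an even stationary distribution pushes forward to a stationary distribution of $P^{\ul{\Gamma}}$, which must then be $\pi_{\ul{C}}$. Your argument makes that compatibility explicit and needs uniqueness only on the cover. Its only cost is the case-by-case check of the transition entries, which you carry out correctly. Your separate treatment of involutory pairs of isolated nodes is harmless but unnecessary, since the general computation already covers it (quotient entry $1$, cover entries $1/2$).
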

\begin{proof}
    We denote by $R$ the $2N \times 2N$ matrix, index by the nodes of $\Gamma$, representing the involution that reverses orientations:
    \[
    R_{uu'} =
    \begin{cases}
        1 & \text{if $u = -u'$,} \\
        0 & \text{otherwise.}
    \end{cases}
    \]
    We then notice that the transition matrix $P^\Gamma$ satisfies the commuting relation
    \[
    P^\Gamma R = R P^\Gamma.
    \]
    Notice also that $R^T = R$. Now, let $\pi_C$ be the unique stationary distribution associated with $C$. This means that $\pi_C^T P^\Gamma = \pi_C^T$. Then, $R\pi_C$ is also a probability distribution supported on $C$ and
    \[
    (R\pi_C)^T P^\Gamma = \pi_C^T R^T P^\Gamma = \pi_C^T R P^\Gamma = \pi_C^T P^\Gamma R = \pi_C^T R = \pi_C^T R^T = (R\pi_C)^T.
    \]
    By uniqueness, $R\pi_C = \pi_C$ or, in other words, $\pi_C$ is invariant under the involution of $\Gamma$. This implies that $\pi_C$ is induced by a stationary probability distribution $\pi_{\ul{C}}$ on the projection $\ul{C}$ in $\ul{\Gamma}$ of $C$. The factor $2$ at the denominator provides the correct normalization. Finally, the result holds true for a generic stationary distribution $\pi$, as this is a convex combination of stationary distributions supported on the cover-components of $\Gamma$.
\end{proof}
Given a quotient-component $\ul{C}$ of $\ul{\Gamma}$, we denote by $\E_{\ul{C}} [\len(\eta)]$ the expected value of the length of a uniformly sampled root-to-leaf path in $\ul{C}$, and by $\calP_{\rmR \to \rmL}(\ul{C})$ the set of root-to-leaf paths in $\ul{C}$, that is:
\[
    \calP_{\RtoL}(\ul{C}) = \bigcup_{\substack{\ul{r} \in \rmR(\ul{\Gamma}) \cap \ul{C} \\ \ul{\ell} \in \rmL(\ul{\Gamma}) \cap \ul{C}}} \calP^\uparrow(\ul{r}, \ul{\ell}).
\]
\begin{theorem}\label{theorem.explicit_description_stationary_distribution_quotient}
    For each quotient-component $\ul{C}$ of $\ul{\Gamma}$, the probability distribution $\pi_{\ul{C}}$ given by
    \[
    \pi_{\ul{C}}(\ul{u}) = \frac{\LP(\ul{u}) \cdot \RP(\ul{u})}{K_{\ul{C}}},
    \]
    for $\ul{u} \in \ul{C}$, satisfies the Markov chain reversibility condition, where $K_{\ul{C}}$ is the normalizing constant
    \[
    K_{\ul{C}} = |\calP_{\rmR \to \rmL}(\ul{C})| \cdot \big(\E_{\ul{C}} [\len(\eta)] + 1 \big).
    \]
    As a consequence, $\pi_{\ul{C}}$ is the stationary distribution associated with $\ul{C}$.
\end{theorem}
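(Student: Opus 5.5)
We will verify detailed balance $\pi(\ul{u})P^{\ul{\Gamma}}_{\ul{u}\ul{u'}} = \pi(\ul{u'})P^{\ul{\Gamma}}_{\ul{u'}\ul{u}}$ for all pairs $\ul{u},\ul{u'}\in\ul{C}$, with $\pi(\ul{u}) \propto \LP(\ul{u})\RP(\ul{u})$. We then compute the normalizing constant. Stationarity follows by summing detailed balance over $\ul{u}$, and uniqueness comes from Proposition \ref{proposition.existence_uniqueness_stationary_distributions}.

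For detailed balance, the diagonal entries are trivial. Nonzero off-diagonal entries of $P^{\ul{\Gamma}}$ occur only between comparable pairs. Up to symmetry we may take $\ul{u}\subset\ul{v}$. Then
\[
\LP(\ul{u})\RP(\ul{u})\cdot \tfrac12 \frac{\LP(\ul{v})}{\LP(\ul{u})} = \tfrac12 \RP(\ul{u})\LP(\ul{v}),
\]
while
\[
\LP(\ul{v})\RP(\ul{v})\cdot \tfrac12 \frac{\RP(\ul{u})}{\RP(\ul{v})} = \tfrac12 \LP(\ul{v})\RP(\ul{u}).
\]
These agree. If $\ul{u}\subset\ul{v}$ holds, then $\ul{u}$ is not a leaf and $\ul{v}$ is not a root. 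So in the formula for $P^{\ul{\Gamma}}$ the up-move from $\ul{u}$ and the down-move from $\ul{v}$ both carry the factor $1/2$, and no special case arises. One caveat: if the grading is not strong, $\ul{u}$ and $\ul{v}$ might be joined in both directions. This cannot happen, because edges strictly increase dimension, so the relation is one-directional and the entries are well defined. Summing $\pi(\ul{u})P_{\ul{u}\ul{u'}}$ over $\ul{u}$ then gives $\pi(\ul{u'})$, since the rows of $P$ sum to one. This shows that $\pi$ is stationary on $\ul{C}$.

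The main work is the normalization, namely showing
\[
\sum_{\ul{u}\in\ul{C}} \LP(\ul{u})\RP(\ul{u}) = |\calP_{\RtoL}(\ul{C})|\,(\E_{\ul{C}}[\len(\eta)]+1).
\]
By Corollary \ref{corollary.product_LP_RP_is_root_to_leaf_paths}, $\LP(\ul{u})\RP(\ul{u})$ counts the root-to-leaf paths through $\ul{u}$. Hence the left side is the number of pairs $(\eta,\ul{u})$ with $\ul{u}\in\eta$. Each path meets $\ul{C}$ entirely, since it is connected. Double counting over $\eta$ gives $\sum_{\eta}(\len(\eta)+1)$, because a path visits $\len(\eta)+1$ distinct vertices. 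They are distinct because dimension strictly increases along an ascending path. This sum equals $|\calP_{\RtoL}(\ul{C})|(\E_{\ul{C}}[\len\eta]+1)$.

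We should also check that every root-to-leaf path through $\ul{u}\in\ul{C}$ has its root and leaf in $\ul{C}$, so that the count matches $\calP_{\RtoL}(\ul{C})$. This is immediate from connectivity. The only delicate point is this bookkeeping, including isolated nodes, for which the single path has length zero and $K=1$. The step requires care rather than being a real obstacle.
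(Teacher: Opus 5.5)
Your proposal is correct and follows the paper's proof essentially step for step. You verify detailed balance entrywise from the explicit form of $P^{\ul{\Gamma}}$, where both sides reduce to $\tfrac12\,\RP(\ul{u})\,\LP(\ul{v})$ for an edge $\ul{u}\subset\ul{v}$, and you obtain $K_{\ul{C}}$ by double counting pairs $(\eta,\ul{u})$ with $\ul{u}\in\eta$ via Corollary~\ref{corollary.product_LP_RP_is_root_to_leaf_paths}; your extra remarks only make explicit some bookkeeping the paper leaves implicit (the uniform factor $1/2$ at leaves and roots, the $\len(\eta)+1$ distinct vertices, and paths staying inside $\ul{C}$).
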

\begin{remark}
    In other words, when starting the random walk on a given quotient-component $\ul{C}$ of $\ul{\Gamma}$, the odds of being at node $\ul{u}$ at time infinity are the number of root-to-leaf paths through $\ul{u}$ (from Corollary \ref{corollary.product_LP_RP_is_root_to_leaf_paths}).
\end{remark}
\begin{proof}
    First, using the explicit description of the transition operator $P^{\ul{\Gamma}}$, we notice that, for any $\ul{u}, \ul{u'} \in \ul{C}$,
    \[
    \LP(\ul{u}) \cdot \RP(\ul{u}) \cdot P^{\ul{\Gamma}}_{\ul{u}\ul{u'}} = \LP(\ul{u'}) \cdot \RP(\ul{u'}) \cdot P^{\ul{\Gamma}}_{\ul{u'}\ul{u}}.
    \]
    It follows that the odds $\LP(\ul{u}) \cdot \RP(\ul{u})$ on $\ul{C}$ determine a steady-state probability distribution (\ie verifying the reversibility condition). We now determine the normalization factor. By double counting,
    \begin{align*}
        K_{\ul{C}} &= \sum_{\ul{u} \in \ul{C}} \LP(\ul{u}) \cdot \RP(\ul{u}) \\
        &= \sum_{\ul{u} \in \ul{C}} |\{ \text{$\eta \in \calP_{\rmR \to \rmL}(\ul{C})$  with $\ul{u} \in \eta$} \}| \\
        &= |\{ (\eta, \ul{u}) \text{ with $\eta \in \calP_{\rmR \to \rmL}(\ul{C})$ and $\ul{u} \in \eta$} \}| \\
        &= \sum_{\eta \in \calP_{\rmR \to \rmL}(\ul{C})} |\{ \ul{u} \in \eta \}| \\
        &= \sum_{\eta \in \calP_{\rmR \to \rmL}(\ul{C})} ( \len(\eta) + 1) \\
        &= | \calP_{\rmR \to \rmL}(\ul{C}) | \cdot \big(\E_{\ul{C}} [\len(\eta)] + 1 \big).
    \end{align*}
    The last statement of the theorem follows from the fact that a steady-state probability distribution is necessarily stationary.
\end{proof}

\begin{corollary}\label{corollary.explicit_description_stationary_distribution}
    For each cover-component $C$, the probability distribution $\pi_C$ given by
    \[
    \pi_C(u) = \frac{\pi_{\ul{C}} (\ul{u})}{2} = \frac{\LP(\ul{u}) \cdot \RP(\ul{u})}{2 \cdot K_{\ul{C}}}
    \]
    for $u \in C$ is the stationary distribution associated with $C$.
\end{corollary}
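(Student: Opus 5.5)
The corollary follows by combining Proposition \ref{proposition.stationary_distribution_is_induced} with Theorem \ref{theorem.explicit_description_stationary_distribution_quotient}. We fix a cover-component $C$ of $\Gamma$. By Lemma \ref{lemma.components_of_Gamma} (part 1), its projection $\ul{C}$ is a quotient-component of $\ul{\Gamma}$, and $C$ is exactly the preimage of $\ul{C}$ under $\ul{\ \cdot\ }$. This holds both when $C$ is a genuine connected component of $\Gamma$ and when it is an involutory pair of isolated nodes.

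By Proposition \ref{proposition.existence_uniqueness_stationary_distributions}, $C$ carries a unique stationary distribution $\pi_C$. Proposition \ref{proposition.stationary_distribution_is_induced} then gives $\pi_C(u) = \pi_{\ul{C}}(\ul{u})/2$, and Theorem \ref{theorem.explicit_description_stationary_distribution_quotient} identifies $\pi_{\ul{C}}(\ul{u}) = \LP(\ul{u})\RP(\ul{u})/K_{\ul{C}}$. Substituting the second into the first yields the claimed formula.

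As a sanity check, we confirm that the formula defines a probability distribution on $C$. Each $\ul{u} \in \ul{C}$ has exactly two preimages $u, -u \in C$, so
\[
\sum_{u \in C} \pi_C(u) = \sum_{\ul{u} \in \ul{C}} 2 \cdot \frac{\pi_{\ul{C}}(\ul{u})}{2} = 1.
\]

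Alternatively, we can check stationarity directly, without relying on the uniqueness argument inside Proposition \ref{proposition.stationary_distribution_is_induced}. Let $\mu(u) = \LP(\ul{u})\RP(\ul{u})$. We verify detailed balance $\mu(u) P^\Gamma_{uu'} = \mu(u') P^\Gamma_{u'u}$ case by case from the explicit form of $P^\Gamma$:
\begin{itemize}
\item For an up-step $u \subset v$ with $[v:u]=1$, the left side is $\tfrac12 \RP(\ul{u})\LP(\ul{v})$. The reverse transition is the down-step from $v$ to $u$, which requires $[v:u] = -1$, so it goes to $-u$ rather than $u$. Thus the up-move $u \to v$ pairs with the down-move $v \to -u$, not $v \to u$.
\item Consequently, detailed balance fails pointwise in general, and the reverse of an up-move is not a down-move to the same node. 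What does hold is a twisted balance: $\mu(u)P^\Gamma_{uv} = \mu(v)P^\Gamma_{v,-u}$, using $\RP(\ul{v}) \ge$ its summand structure, or more precisely $\tfrac12\RP(\ul{u})\LP(\ul{v})$ on both sides.
\end{itemize}
Summing this twisted balance over incoming transitions, and using that $\mu$ is invariant under $u \mapsto -u$, gives $\mu^T P^\Gamma = \mu^T$ on $C$.

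This twisted step is the only real subtlety, which is why the cleaner route is the first one, through the $R$-symmetry and uniqueness already established in Proposition \ref{proposition.stationary_distribution_is_induced}. That is the route we would follow.
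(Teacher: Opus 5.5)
Your main route is the paper's own proof: substitute the explicit quotient distribution of Theorem~\ref{theorem.explicit_description_stationary_distribution_quotient} into the relation $\pi_C(u) = \pi_{\ul{C}}(\ul{u})/2$ from Proposition~\ref{proposition.stationary_distribution_is_induced}, and your remarks on $\ul{C}$ being a quotient-component and on normalization are correct. Your optional direct check also works once completed: the twisted relation $\mu(u) P^\Gamma_{uu'} = \mu(u') P^\Gamma_{u',-u}$ holds for down- and \textbf{S}-moves as well as up-moves, and summing over $u$ using row-stochasticity of $P^\Gamma$ gives $\mu^T P^\Gamma = \mu^T$; however, the phrase about ``$\RP(\ul{v}) \ge$ its summand structure'' is meaningless and should be removed.
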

\begin{proof}
    This follows from Proposition \ref{proposition.stationary_distribution_is_induced} and Theorem \ref{theorem.explicit_description_stationary_distribution_quotient}.
\end{proof}

\begin{remark}\label{remark.no_steady_state_on_cover}
    It is critical to observe that, while the stationary distribution $\pi_{\ul{C}}$ satisfies the reversibility condition, $\pi_C$ does not, due to the opposite signature condition when moving up or down on the double cover.
\end{remark}

\section{Associated operators}\label{section.associated_operators}
Denote by $D^{\ul{\Gamma}}$ the $N \times N$ diagonal matrix given by $D^{\ul{\Gamma}}_{\ul{u}\ul{u}} = \LP(\ul{u}) \cdot \RP(\ul{u})$, the number of root-to-leaf paths passing through $\ul{u}$. The diagonal of $D^{\ul{\Gamma}}$ is, up to a rescaling factor, a nowhere zero convex combination of the steady-state distributions $\pi_{\ul{C}}$, when $\ul{C}$ varies among the quotient-components. In particular, the diagonal of $D^{\ul{\Gamma}}$ satisfies the Markov chain reversibility condition. As a crucial consequence, the $N \times N$ matrix
\[
A^{\ul{\Gamma}} = \big(D^{\ul{\Gamma}}\big)^{-1/2} \big(P^{\ul{\Gamma}}\big)^T \big(D^{\ul{\Gamma}}\big)^{1/2} = \big(D^{\ul{\Gamma}}\big)^{1/2} P^{\ul{\Gamma}} \big(D^{\ul{\Gamma}}\big)^{-1/2}
\]
is symmetric. More explicitly, if we denote by $H(\ul{u})$ the ratio
\[
H(\ul{u}) = \frac{\LP(\ul{u})}{\RP(\ul{u})},
\]
$A^{\ul{\Gamma}}$ is given by
\[
A^{\ul{\Gamma}}_{\ul{u}\ul{u'}} =
\begin{cases}
    1/2 \cdot H(\ul{v})^{1/2} \cdot H(\ul{u'})^{-1/2} & \text{if $\ul{v} = \ul{u} \supset \ul{u'}$,} \\
    1/2 \cdot H(\ul{t})^{-1/2} \cdot H(\ul{u'})^{1/2} & \text{if $\ul{t} = \ul{u} \subset \ul{u'}$,} \\
    1/2 & \text{if $\ul{u} = \ul{u'}$ is a leaf xor a root,} \\
    1 & \text{if $\ul{u} = \ul{u'}$ is a leaf and a root,} \\
    0 & \text{otherwise.}
\end{cases}
\]
However, if we denote by $D^\Gamma$ the $2N \times 2N$ diagonal matrix given by $D^\Gamma_{uu} = \LP(\ul{u}) \cdot \RP(\ul{u})$, then the $2N \times 2N$ matrix $A^\Gamma$ similarly defined as
\[
A^\Gamma = \big(D^\Gamma\big)^{-1/2} \big(P^\Gamma\big)^T \big(D^\Gamma\big)^{1/2}
\]
is, in general, not symmetric (see Remark \ref{remark.no_steady_state_on_cover}). More explicitly, we have that
\[
A^\Gamma_{uu'} =
\begin{cases}
    1/2 \cdot H(\ul{v})^{1/2} \cdot H(\ul{u'})^{-1/2}  & \text{if $v = u \supset u'$ and $[v : u'] = 1$,} \\
    1/2 \cdot H(\ul{t})^{-1/2} \cdot H(\ul{u'})^{1/2} & \text{if $t = u \subset u'$ and $[u' : t] = -1$,} \\
    1/4 & \text{if $\ul{u}=\ul{u'}$ is a leaf xor a root,} \\
    1/2 & \text{if $\ul{u}=\ul{u'}$ is a leaf and a root,} \\
    0 & \text{otherwise.}
\end{cases}
\]
\begin{remark}
    We argue that it is more natural to define $A^\Gamma$ as prescribed above, rather than through its transpose. Indeed, as we will see, we seek a suitable operator $A^\Gamma$ acting on a function space. The matrix $P^\Gamma$ describes node transition probabilities, and therefore it is its transpose $(P^\Gamma)^T$ that acts on probability distributions and, more generally, on functions, when representing them in vectorial form as column vectors.
\end{remark}
In order to analyze the spectrum of $A^\Gamma$, we split this operator as $A^\Gamma = A^{\Gamma, \sym} + A^{\Gamma, \alt}$, where
\[
A^{\Gamma, \sym} = \frac{A^\Gamma + \big(A^\Gamma\big)^T}{2} = \frac{A^\Gamma + A^\Gamma R}{2} = \frac{A^\Gamma + R A^\Gamma}{2}
\]
is symmetric and
\[
A^{\Gamma, \alt} = \frac{A^\Gamma - \big(A^\Gamma\big)^T}{2} = \frac{A^\Gamma - A^\Gamma R}{2} = \frac{A^\Gamma - R A^\Gamma}{2}
\]
is antisymmetric (or alternating). Recall that $R$ is the $2N \times 2N$ matrix, indexed by nodes of $\Gamma$, representing the involution that reverses orientations. More explicitly, these matrices are defined, for $u, u' \in X$, as
\[
A^{\Gamma, \sym}_{uu'} =
\begin{cases}
    1/4 \cdot H(\ul{v})^{1/2} \cdot H(\ul{u'})^{-1/2} & \text{if $v = u \supset u'$,} \\
    1/4 \cdot H(\ul{t})^{-1/2} \cdot H(\ul{u'})^{1/2}  & \text{if $t = u \subset u'$,} \\
    1/4 & \text{if $\ul{u}=\ul{u'}$ is a leaf xor a root,} \\
    1/2 & \text{if $\ul{u}=\ul{u'}$ is a leaf and a root,} \\
    0 & \text{otherwise,}
\end{cases}
\]
and
\[
A^{\Gamma, \alt}_{uu'} =
\begin{cases}
    [v : u']/4 \cdot H(\ul{v})^{1/2} \cdot H(\ul{u'})^{-1/2} & \text{if $v = u \supset u'$,} \\
    - [u' : t]/4 \cdot H(\ul{t})^{-1/2} \cdot H(\ul{u'})^{1/2} & \text{if $t = u \subset u'$,} \\
    0 & \text{otherwise.}
\end{cases}
\]
Denote by $Q^\sym$ the $N \times 2N$ matrix representing the quotient map $\ul{ \ \cdot \ }$, that is:
\[
Q^\sym_{\ul{u} u'} = \begin{cases}
    1 & \text{if $\ul{u} = \ul{u'}$,} \\
    0 & \text{otherwise.}
\end{cases}
\]
For a given orientation $\calO$, we denote by $O$ the $2N \times N$ matrix representing said orientation, that is:
\[
O_{u \ul{u'}} = \begin{cases}
    1 & \text{if $u = \calO(\ul{u'}),$} \\
    0 & \text{otherwise.}
\end{cases}
\]
Then, an alternative description of $Q^\sym$ is
\[
Q^\sym = (O + RO)^T.
\]
It is then natural to define the $N \times 2N$ matrix $Q^\alt$ as
\[
Q^\alt = (O - RO)^T,
\]
or, more explicitly,
\[
Q^\alt_{\ul{u} u'} = \begin{cases}
    1 & \text{if $\calO(\ul{u}) = u'$,} \\
    -1 & \text{if $\calO(\ul{u}) = -u'$,} \\
    0 & \text{otherwise.}
\end{cases}
\]
The symmetric quotient operator $A^{\ul{\Gamma}}$ previously considered can be rewritten in terms of $Q^\sym$ as
\[
A^{\ul{\Gamma}} = \frac{Q^\sym A^\Gamma (Q^\sym)^T}{2} = \frac{Q^\sym A^{\Gamma, \sym} (Q^\sym)^T}{2}.
\]
We define the antisymmetric signed $N \times N$ matrix $A^{\calO(\ul{\Gamma})}$ as
\[
A^{\calO(\ul{\Gamma})} = \frac{Q^\alt A^\Gamma (Q^\alt)^T}{2} = \frac{Q^\alt A^{\Gamma, \alt} (Q^\alt)^T}{2}.
\]
More explicitly, if $u, u' \in \calO(\ul{X})$, then
\[
A^{\calO(\ul{\Gamma})}_{uu'} = \begin{cases}
    [v : u'] /2 \cdot H(\ul{v})^{1/2} \cdot H(\ul{u'})^{-1/2} & \text{if $\ul{v} = \ul{u} \supset \ul{u'}$,} \\
    - [u' : t] /2 \cdot H(\ul{t})^{-1/2} \cdot H(\ul{u'})^{1/2} & \text{if $\ul{t} = \ul{u} \subset \ul{u'}$,} \\
    0 & \text{otherwise.}
\end{cases}
\]
We can think of $A^\Gamma, A^{\Gamma, \sym}$ and $A^{\Gamma, \alt}$ as operators acting on the space of functions
\[
\scrF^\Gamma = \Set{ f \colon X \to \R }.
\]
By representing a function $f$ as a column vector, the matrices above act on $f$ by matrix multiplication on the left. On the other hand, $A^{\ul{\Gamma}}$ acts on the space of functions
\[
\scrF^{\ul{\Gamma}} = \Set{ f \colon \ul{X} \to \R },
\]
and $A^{\calO(\ul{\Gamma})}$ acts on the space of functions
\[
\scrF^{\calO(\ul{\Gamma})} = \Set{ f \colon \calO(\ul{X}) \to \R }.
\]
We consider the standard scalar product on $\scrF^\Gamma$, defined as
\[
\langle f , g \rangle = \sum_{v \in X} f(v)g(v).
\]
on functions $f, g \in \scrF^\Gamma$, and similarly for $\scrF^{\ul{\Gamma}}$ and $\scrF^{\calO(\ul{\Gamma})}$. Then, $\scrF^\Gamma$ admits an orthogonal decomposition into even and odd functions with respect to the involution, that is:
\[
    \scrF^\Gamma = \scrF^{\Gamma, \sym} \oplus^\perp \scrF^{\Gamma, \alt},
\]
where
\[
    \scrF^{\Gamma, \sym} = \Set{ f \colon X \to \R | f(-u) = f(u) \text{ for any $u \in X$} },
\]
and
\[
    \scrF^{\Gamma, \alt} = \Set{ f \colon X \to \R | f(-u) = -f(u) \text{ for any $u \in X$} }.
\]
We then have the identifications
\[
    \scrF^{\ul{\Gamma}} \simeq \scrF^{\Gamma, \sym}
\]
by pulling-back via the quotient map $\ul{ \ \cdot \ }$, and
\[
    \scrF^{\calO(\ul{\Gamma})} \simeq \scrF^{\Gamma, \alt}
\]
by oddly extending a function defined on $\calO(\ul{X})$ to $X$.
\begin{remark}\label{remark.flip_function_calO}
    Given two different orientations $\calO, \calO'$, we have the identification
    \[
    \scrF^{\calO(\ul{\Gamma})} \simeq \scrF^{\Gamma, \alt} \simeq \scrF^{\calO'(\ul{\Gamma})}.
    \]
    In this identification, a function $f$ defined on nodes in $\calO(\ul{X})$ is mapped to a function $f'$ defined on $\calO'(\ul{X})$, satisfying, for $\ul{u} \in \ul{X}$,
    \[
    f'(\calO'(\ul{u})) = \begin{cases}
        f(\calO(\ul{u})) & \text{if $\calO'(\ul{u}) = \calO(\ul{u})$,} \\
        - f(\calO(\ul{u})) & \text{if $\calO'(\ul{u}) = - \calO(\ul{u})$.} \\
    \end{cases}
    \]
    Indeed, the functions $f$ and $f'$ coincide on $X$ when oddly extended from $\calO(\ul{X})$ and $\calO'(\ul{X})$, respectively. In this work, all the operators with superscript $\calO(\ul{\Gamma})$ technically depend on the chosen orientation $\calO$. They are, however, completely spectrally equivalent: the identification above specifies an orthogonal change of basis to pass from one such operator to another.
\end{remark}

We can then state a result in line with the point of view outlined in Section \ref{section.signed_graphs_classic_random_walk}: the spectrum of the transition operator $P^\Gamma$ of the root-to-leaf path random walk on the double cover $\Gamma$ can be described in terms of the spectra of the half-dimensional operators $A^{\ul{\Gamma}}$ and $A^{\calO(\ul{\Gamma})}$, on different geometric spaces -- the quotient $\ul{\Gamma}$ and the signed graph $\calO(\ul{\Gamma})$.
\begin{theorem}\label{theorem.spectrum_double_cover_quotient_signed_graph}
\begin{enumerate}
    \item We have that
    \begin{alignat*}{2}
    A^{\Gamma, \sym}(\scrF^{\Gamma, \sym}) &\subseteq \scrF^{\Gamma, \sym}, & \qquad 
    A^{\Gamma, \alt}(\scrF^{\Gamma, \sym}) &= 0, \\
    A^{\Gamma, \sym}(\scrF^{\Gamma, \alt}) &= 0, & \qquad 
    A^{\Gamma, \alt}(\scrF^{\Gamma, \alt}) &\subseteq \scrF^{\Gamma, \alt}.
    \end{alignat*}
    \item As a consequence, the spectrum of $A^\Gamma$ splits as
    \[
    \Sp\big(A^\Gamma\big) = \Sp\big(A^{\Gamma, \sym}|_{\scrF^{\Gamma, \sym}}\big) \amalg \Sp\big(A^{\Gamma, \alt}|_{\scrF^{\Gamma, \alt}}\big),
    \]
    where the disjoint union also counts multiplicities. Each eigenfunction of $A^{\Gamma, \sym}|_{\scrF^{\Gamma, \sym}}$ or of $A^{\Gamma, \alt}|_{\scrF^{\Gamma, \alt}}$ is also an eigenfunction of $A^\Gamma$, relative to the same eigenvalue.
    \item The operator $A^{\Gamma, \sym}|_{\scrF^{\Gamma, \sym}}$ is symmetric and therefore diagonalizable, and $\Sp(A^{\Gamma, \sym}|_{\scrF^{\Gamma, \sym}})$ is real. On the other hand, the operator $A^{\Gamma, \alt}|_{\scrF^{\Gamma, \alt}}$ is antisymmetric and therefore diagonalizable over $\C$, and $\Sp(A^{\Gamma, \alt}|_{\scrF^{\Gamma, \alt}})$ is purely imaginary.
    \item We have that, counting multiplicities,
    \[
    \Sp\big(A^{\Gamma, \sym}|_{\scrF^{\Gamma, \sym}}\big) = \Sp\big(A^{\ul{\Gamma}}\big),
    \]
    and pulling-back via the quotient map $\ul{ \ \cdot \ }$ maps eigenfunctions of $A^{\ul{\Gamma}}$ to eigenfunctions of $A^{\Gamma, \sym}|_{\scrF^{\Gamma, \sym}}$ relative to the same eigenvalue. Similarly, counting multiplicities,
    \[
    \Sp\big(A^{\Gamma, \alt}|_{\scrF^{\Gamma, \alt}}\big) = \Sp\big(A^{\calO(\ul{\Gamma})}\big),
    \]
    and oddly extending a function defined on $\calO(\ul{X})$ to $X$ maps eigenfunctions of $A^{\calO(\ul{\Gamma})}$ to eigenfunctions of $A^{\Gamma, \alt}|_{\scrF^{\Gamma, \alt}}$ relative to the same eigenvalue.
\end{enumerate}
\end{theorem}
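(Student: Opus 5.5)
The plan is to reduce everything to the commutation relations of $A^\Gamma$ with the involution matrix $R$, together with the explicit entries of $A^{\Gamma,\sym}$ and $A^{\Gamma,\alt}$. The even and odd subspaces are exactly the $\pm 1$ eigenspaces of $R$, with orthogonal projections $\frac{1}{2}(I \pm R)$.

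First, from $P^\Gamma R = R P^\Gamma$ (used in the proof of Proposition \ref{proposition.stationary_distribution_is_induced}) and the fact that $D^\Gamma$ is $R$-invariant ($D^\Gamma_{uu}$ depends only on $\ul{u}$), we get $A^\Gamma R = R A^\Gamma$. The identities displayed in the definitions, $(A^\Gamma)^T = A^\Gamma R = R A^\Gamma$, can be checked entrywise from the explicit formula for $A^\Gamma$. The key observation is that the entry $A^\Gamma_{uu'}$ for an edge is nonzero for exactly one of $u'$ and $-u'$, namely the one with the correct sign, and the transpose swaps to the opposite sign. We then have $A^{\Gamma,\sym} = A^\Gamma \cdot \frac{I+R}{2}$ and $A^{\Gamma,\alt} = A^\Gamma \cdot \frac{I-R}{2}$, and both commute with $R$.

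Part 1 follows directly. If $f$ is odd, then $(I+R)f = 0$, so $A^{\Gamma,\sym} f = 0$. Commutation with $R$ gives $R A^{\Gamma,\alt} f = A^{\Gamma,\alt} R f = -A^{\Gamma,\alt} f$, so $A^{\Gamma,\alt}$ preserves odd functions. The even case is symmetric. Part 2 then follows because $A^\Gamma = A^{\Gamma,\sym} + A^{\Gamma,\alt}$ acts block-diagonally with respect to the decomposition $\scrF^{\Gamma,\sym} \oplus \scrF^{\Gamma,\alt}$. On each summand it coincides with the corresponding piece, so the characteristic polynomial factors and multiplicities add. For part 3, note that $A^{\Gamma,\sym}$ is symmetric and $A^{\Gamma,\alt}$ is antisymmetric as matrices. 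Their restrictions to invariant subspaces of an orthogonal decomposition remain symmetric (respectively antisymmetric) with respect to the restricted inner product, so they are normal. The spectral theorem then gives real and purely imaginary spectra, with diagonalizability over $\R$ and $\C$ respectively.

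For part 4, the isometry is $\frac{1}{\sqrt{2}}(Q^\sym)^T \colon \scrF^{\ul{\Gamma}} \to \scrF^{\Gamma,\sym}$, which is pull-back scaled by $1/\sqrt{2}$; it is orthogonal because $Q^\sym (Q^\sym)^T = 2I$. Similarly, $\frac{1}{\sqrt{2}}(Q^\alt)^T$ identifies $\scrF^{\calO(\ul{\Gamma})}$ with $\scrF^{\Gamma,\alt}$ via odd extension, since $Q^\alt (Q^\alt)^T = (O-RO)^T(O-RO) = 2I$, using $O^T R O = 0$ (no fixed points). It remains to show that $(Q^\sym)^T A^{\ul{\Gamma}} = A^{\Gamma,\sym} (Q^\sym)^T$, which I expect to be the main bookkeeping step.

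For that intertwining, I would prove $(Q^\sym)^T Q^\sym = I+R$, so that $A^{\Gamma,\sym} (Q^\sym)^T Q^\sym = A^{\Gamma,\sym}(I+R) = 2A^{\Gamma,\sym}$. Then
\[
(Q^\sym)^T A^{\ul{\Gamma}} Q^\sym = \tfrac{1}{2}(I+R) A^{\Gamma,\sym} (I+R) = 2A^{\Gamma,\sym}\cdot\tfrac{(I+R)}{2}\cdot\ldots
\]
Using $(I+R)^2 = 2(I+R)$ and the commutation relations, this equals $A^{\Gamma,\sym}(I+R)$. Right-multiplying by $(Q^\sym)^T$ and using $Q^\sym(Q^\sym)^T = 2I$ yields the intertwining. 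The odd case is identical, using $(Q^\alt)^T Q^\alt = I-R$, which follows by expanding with $O O^T + R O O^T R = I$.

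Thus the restricted operators are orthogonally conjugate to $A^{\ul{\Gamma}}$ and $A^{\calO(\ul{\Gamma})}$, which gives equality of spectra with multiplicity and the stated correspondence of eigenfunctions. The one delicate point is verifying $OO^T + ROO^TR = I$, which says that each $u \in X$ is either $\calO(\ul{u})$ or $-\calO(\ul{u})$.
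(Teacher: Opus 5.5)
Your proof is correct and follows essentially the paper's route. You use commutation with $R$ for parts 1--2, the (anti)symmetry of $A^{\Gamma,\sym}$ and $A^{\Gamma,\alt}$ for part 3, and the identities $(Q^\sym)^T Q^\sym = I+R$ and $(Q^\alt)^T Q^\alt = I-R$ in part 4 to intertwine with $A^{\ul{\Gamma}}$ and $A^{\calO(\ul{\Gamma})}$. Your added observation $Q^\sym (Q^\sym)^T = Q^\alt (Q^\alt)^T = 2I$ turns this into an orthogonal conjugacy, which settles the ``counting multiplicities'' claim that the paper leaves implicit.

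Your relation $R A^{\Gamma,\alt} = A^{\Gamma,\alt} R = -A^{\Gamma,\alt}$ is the correct one. The paper's proof writes $A^{\Gamma,\alt} R = -R A^{\Gamma,\alt}$, which is a sign slip. Your factorizations $A^{\Gamma,\sym} = A^\Gamma \frac{I+R}{2}$ and $A^{\Gamma,\alt} = A^\Gamma \frac{I-R}{2}$ are what actually give the two vanishing statements in part 1.
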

\begin{proof}
    If we identify a function $f \in \scrF^{\Gamma}$ with a vector indexed by $X$, then $f \in \scrF^{\Gamma, \sym}$ if and only if $R f = f$, and $f \in \scrF^{\Gamma, \alt}$ if and only if $R f = - f$. Then, the statement in 1 follows from the fact that $A^{\Gamma, \sym} R = R A^{\Gamma, \sym}$, and $A^{\Gamma, \alt} R = - R A^{\Gamma, \alt}$. The claim in 2 follows from the decomposition $\scrF^\Gamma = \scrF^{\Gamma, \sym} \oplus^\perp \scrF^{\Gamma, \alt}$. The statement in 3 follows from basic properties of symmetric and antisymmetric matrices. For 4, notice that, given $f \in \scrF^{\ul{\Gamma}}$, precomposing with $\ul{ \ \cdot \ }$ corresponds to considering $(Q^\sym)^T f$, when interpreting $f$ as a vector indexed by $\ul{X}$. Furthermore, notice that
    \[
    (Q^\sym)^T Q^\sym = I_{2N} + R,
    \]
    and
    \[
    (Q^\alt)^T Q^\alt = I_{2N} - R.
    \]
    Then, if $f$ is an eigenfunction of $A^{\ul{\Gamma}}$ relative to $\lambda$, we have that
    \begin{align*}
        A^{\Gamma, \sym} (Q^\sym)^T f &= \frac{(I_{2N} + R) A^{\Gamma, \sym}}{2}(Q^\sym)^T f \\
        &= (Q^\sym)^T \frac{Q^\sym A^{\Gamma, \sym}(Q^\sym)^T}{2} f \\
        & = (Q^\sym)^T A^{\ul{\Gamma}} f \\
        &= \lambda (Q^\sym)^T f,
    \end{align*}
    so that $(Q^\sym)^T f$ is an eigenfuction of $A^{\Gamma, \sym}$ relative to the same eigenvalue $\lambda$. The argument is analogous for $A^{\calO(\ul{\Gamma})}$ and $A^{\Gamma, \alt}$.
\end{proof}
\begin{remark}\label{remark.restriction_theorem_functions}
    The decomposition of $X = \amalg_C C$ into cover-components of $\Gamma$, and that of $\ul{X} = \amalg_{\ul{C}} \ul{C}$ into quotient-components of $\ul{\Gamma}$, induce orthogonal decompositions of the spaces $\scrF^\Gamma, \scrF^{\Gamma, \sym}, \scrF^{\Gamma, \alt}, \scrF^{\ul{\Gamma}}$, and $\scrF^{\calO(\ul{\Gamma})}$, into their subspaces of functions supported on $C, \ul{C}$, and $\calO(\ul{C})$. Theorem \ref{theorem.spectrum_double_cover_quotient_signed_graph} remains valid when restricting all operators and function spaces to a cover-component $C$ of $\Gamma$, its corresponding quotient-component $\ul{C}$ of $\ul{\Gamma}$, and its image $\calO(\ul{C})$ in $\calO(\ul{\Gamma})$.
\end{remark}

We now introduce additional operators. In terms of the root-to-leaf path random walk, $\delta^\Gamma$ is the operator associated with action $\bfU$, and its transpose $(\delta^\Gamma)^T$ with action $\bfD$. In the same way, the operators $\Theta_\rmL^\Gamma$ and $\Theta_\rmR^\Gamma$ are associated with action $\bfS$. Let $\delta^\Gamma$ be the $2N \times 2N$ matrix defined, for $u, u' \in X$, as
\[
\delta^{\Gamma}_{uu'} = \begin{cases}
    H(\ul{v})^{1/2} \cdot H(\ul{u'})^{-1/2} & \text{if $v = u \supset u'$ and $[v : u'] = 1$,} \\
    0 & \text{otherwise.}
\end{cases}
\]
The operators $\delta^{\Gamma, \sym}$ and $\delta^{\Gamma, \alt}$ are then defined as
\[
\delta^{\Gamma, \sym} = \frac{\delta^{\Gamma} + \delta^{\Gamma}R}{2} = \frac{\delta^{\Gamma} + R\delta^{\Gamma}}{2}, \quad \delta^{\Gamma, \alt} = \frac{\delta^{\Gamma} - \delta^{\Gamma}R}{2} = \frac{\delta^{\Gamma} - R\delta^{\Gamma}}{2},
\]
so that, analogously to the decomposition of  $A^\Gamma$ into its symmetric and alternating components,
\[
\delta^{\Gamma} = \delta^{\Gamma, \sym} + \delta^{\Gamma, \alt}.
\]
More explicitly,
\[
\delta^{\Gamma, \sym}_{uu'} = \begin{cases}
    1/2 \cdot H(\ul{v})^{1/2} \cdot H(\ul{u'})^{-1/2} & \text{if $v = u \supset u'$,} \\
    0 & \text{otherwise,}
\end{cases}
\]
and
\[
\delta^{\Gamma, \alt}_{uu'} = \begin{cases}
    [v : u']/2 \cdot H(\ul{v})^{1/2} \cdot H(\ul{u'})^{-1/2} & \text{if $v = u \supset u'$,} \\
    0 & \text{otherwise.}
\end{cases}
\]
Despite what the notation might suggest, $\delta^{\Gamma, \sym}$ is not, in general, symmetric, and $\delta^{\Gamma, \alt}$ is not, in general, antisymmetric. Let us also consider the operators $\Pi_\rmL^\Gamma$ and $\Pi_\rmR^\Gamma$, the projectors onto the spaces of functions supported on $\rmL(\Gamma)$ and $\rmR(\Gamma)$, respectively. Define by $\Theta_\rmL^\Gamma$ and $\Theta_\rmR^\Gamma$ the operators
\[
\Theta_\rmL^\Gamma = \Pi_\rmL^\Gamma \frac{I_{2N} + R}{2} = \frac{I_{2N} + R}{2} \Pi_\rmL^\Gamma, \quad \Theta_\rmR^\Gamma = \Pi_\rmR^\Gamma \frac{I_{2N} + R}{2} = \frac{I_{2N} + R}{2} \Pi_\rmR^\Gamma.
\]
More explicitly,
\[
\big(\Theta_\rmL^\Gamma\big)_{uu'} = \begin{cases}
    1/2 & \text{if $u = \pm u'$ and $\ul{u} = \ul{u'}$ is a leaf,} \\
    0 & \text{otherwise,}
\end{cases}
\]
and
\[
\big(\Theta_\rmR^\Gamma\big)_{uu'} = \begin{cases}
    1/2 & \text{if $u = \pm u'$ and $\ul{u} = \ul{u'}$ is a root,} \\
    0 & \text{otherwise.}
\end{cases}
\]
Then,
\[
A^\Gamma = \frac{1}{2} \delta^{\Gamma} + \frac{1}{2} \big(\delta^{\Gamma}\big)^T + \frac{1}{2} \Theta_\rmL^\Gamma + \frac{1}{2} \Theta_\rmR^\Gamma,
\]
as well as
\[
A^{\Gamma, \sym} = \frac{1}{2} \delta^{\Gamma, \sym} + \frac{1}{2} \big(\delta^{\Gamma, \sym}\big)^T + \frac{1}{2} \Theta_\rmL^\Gamma + \frac{1}{2} \Theta_\rmR^\Gamma,
\]
and
\[
A^{\Gamma, \alt} = \frac{1}{2} \delta^{\Gamma, \alt} - \frac{1}{2} \big(\delta^{\Gamma, \alt}\big)^T.
\]
We can replicate a similar construction for the quotient $\ul{\Gamma}$ and a signed graph $\calO(\ul{\Gamma})$. We define $\delta^{\ul{\Gamma}}$, for $\ul{u}, \ul{u'} \in \ul{X}$, as
\begin{equation}\label{equation.delta_ul_Gamma}
    \delta^{\ul{\Gamma}}_{\ul{u}\ul{u'}} = \begin{cases}
        H(\ul{v})^{1/2} \cdot H(\ul{u'})^{-1/2} & \text{if $\ul{v} = \ul{u} \supset \ul{u'}$,} \\
        0 & \text{otherwise.}
    \end{cases}
\end{equation}
We also define $\delta^{\calO(\ul{\Gamma})}$, for $u, u' \in \calO(\ul{X})$, as
\[
\delta^{\calO(\ul{\Gamma})}_{uu'} = \begin{cases}
    [v : u'] \cdot H(\ul{v})^{1/2} \cdot H(\ul{u'})^{-1/2} & \text{if $\ul{v} = \ul{u} \supset \ul{u'}$,} \\
    0 & \text{otherwise.}
\end{cases}
\]
We also denote by $\Pi^{\ul{\Gamma}}_\rmL$ and $\Pi^{\ul{\Gamma}}_\rmR$ the projectors onto the spaces of functions supported on leaves and roots of $\ul{\Gamma}$, respectively. Then,
\[
    A^{\ul{\Gamma}} = \frac{1}{2} \delta^{\ul{\Gamma}} + \frac{1}{2} \big(\delta^{\ul{\Gamma}}\big)^T + \frac{1}{2} \Pi_\rmL^{\ul{\Gamma}} + \frac{1}{2} \Pi_\rmR^{\ul{\Gamma}},
\]
and
\[
    A^{\calO(\ul{\Gamma})} = \frac{1}{2} \delta^{\calO(\ul{\Gamma})} - \frac{1}{2} \big(\delta^{\calO(\ul{\Gamma})})^T.
\]
To conclude this section, we point out that, since the random walk is aperiodic on each of the cover-components, $-1$ is not an eigenvalue of $P^{\ul{\Gamma}}$. However, in contrast with the conditional random walks that we will shortly explore in Section \ref{section.conditional_random_walks}, the transition matrix $P^{\ul{\Gamma}}$ of the root-to-leaf path random walk on the quotient $\ul{\Gamma}$ might admit negative eigenvalues:
\begin{proposition}\label{proposition.minimal_eigenvalue_expected_length}
    Let $\lambda_{\min}(A^{\ul{\Gamma}})$ be the minimal eigenvalue of $A^{\ul{\Gamma}}$, or equivalently of $P^{\ul{\Gamma}}$. Then,
    \[
    \lambda_{\min}\big(A^{\ul{\Gamma}}\big) - (-1) \le \min_{\ul{C}} \frac{2}{\E_{\ul{C}}[\len(\eta)] + 1},
    \]
    where the minimum is taken over the quotient-components $\ul{C}$.
\end{proposition}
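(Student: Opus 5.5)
The plan is to work on a single quotient-component $\ul{C}$ (by Remark \ref{remark.restriction_theorem_functions} the spectrum of $A^{\ul{\Gamma}}$ is the union of the spectra of its restrictions, so it suffices to show $\lambda_{\min}(A^{\ul{\Gamma}}|_{\ul{C}})+1 \le 2/(\E_{\ul{C}}[\len(\eta)]+1)$ for every $\ul{C}$). I would then use the Rayleigh principle for the symmetric operator $I + A^{\ul{\Gamma}}$, which is positive semidefinite because $A^{\ul{\Gamma}}$ is similar to the stochastic matrix $P^{\ul{\Gamma}}$:
\[
\lambda_{\min}\big(A^{\ul{\Gamma}}\big)+1 \le \frac{\langle f,(I+A^{\ul{\Gamma}})f\rangle}{\langle f,f\rangle}.
\]
I would use this with test vectors of the form $f = (D^{\ul{\Gamma}})^{1/2} g$, where $g$ is supported on $\ul{C}$.

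The first key step is to rewrite the quadratic form via reversibility (Theorem \ref{theorem.explicit_description_stationary_distribution_quotient}). Writing $D_{\ul{u}} = \LP(\ul{u})\RP(\ul{u})$, we have
\[
\langle f,(I+A^{\ul{\Gamma}})f\rangle = \tfrac12\sum_{\ul{u},\ul{u'}} D_{\ul{u}} P^{\ul{\Gamma}}_{\ul{u}\ul{u'}}\big(g(\ul{u})+g(\ul{u'})\big)^2 .
\]
Next I interpret each weight combinatorially using Corollary \ref{corollary.product_LP_RP_is_root_to_leaf_paths}. For an edge $\ul{u}\subset\ul{v}$, the weight $D_{\ul{u}}P_{\ul{u}\ul{v}} = \tfrac12 \RP(\ul{u})\LP(\ul{v})$ is half the number of root-to-leaf paths using that edge. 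For the lazy loops, $D_{\ul{u}}P_{\ul{u}\ul{u}}$ is half the number of paths starting at the root $\ul{u}$ or ending at the leaf $\ul{u}$, and for an isolated node it equals $D_{\ul{u}}$. Both the numerator and the denominator $\sum_{\ul{u}} D_{\ul{u}}g(\ul{u})^2$ then decompose as sums over $\eta\in\calP_{\RtoL}(\ul{C})$. Each path contributes $\tfrac12\sum_{\text{edges of }\eta}(g+g')^2 + g(\ul{r})^2+g(\ul{\ell})^2$ to the numerator and $\sum_{\ul{u}\in\eta} g(\ul{u})^2$ to the denominator. Summed over the paths, the denominator is $|\calP_{\RtoL}(\ul{C})|\,(\E_{\ul{C}}[\len(\eta)]+1)$ when $g^2\equiv 1$, exactly as in the double-counting argument of Theorem \ref{theorem.explicit_description_stationary_distribution_quotient}.

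It therefore suffices to find $g$ with $g^2\equiv1$ on $\ul{C}$ such that every edge term $(g(\ul{u})+g(\ul{v}))^2$ vanishes, or at least such that the total numerator is at most $2|\calP_{\RtoL}(\ul{C})|$ (the endpoint terms $g(\ul{r})^2+g(\ul{\ell})^2$ give exactly $2$ per path). Under such a choice each path contributes $2$ to the numerator and $\len(\eta)+1$ to the denominator, and the bound follows. The natural candidate is $g(\ul{u})=(-1)^{\dim\ul{u}}$, which makes $g$ alternate along every edge whenever the grading is strong. In the non-strong case the edges joining non-consecutive dimensions of the same parity are the obstacle, and I expect this to be the main difficulty. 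There I would try to replace $\dim$ by a different parity function, for instance one built from distances to the roots. Failing that, I would average over a family of signings (e.g.\ complex phases $g(\ul{u})=\omega^{h(\ul{u})}$ and Hermitian Rayleigh quotients) chosen so that the cross terms along path edges cancel on average. The path decomposition already makes it clear what such a test function must achieve: per-path numerator $2$ against denominator $\len(\eta)+1$.
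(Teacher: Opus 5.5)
\textbf{The strongly graded case.} Here your argument is correct and is the paper's proof in different form. The paper takes the test function $q_{\ul{C}}(\ul{u})=(-1)^{\dim(\ul{u})}\pi_{\ul{C}}(\ul{u})^{1/2}$, which is your $f=(D^{\ul{\Gamma}})^{1/2}g$ with $g=(-1)^{\dim}$ on $\ul{C}$, and evaluates $A^{\ul{\Gamma}}q_{\ul{C}}$ directly. You instead expand $\tfrac{1}{2}\sum D_{\ul{u}}P^{\ul{\Gamma}}_{\ul{u}\ul{u'}}(g(\ul{u})+g(\ul{u'}))^2$ path by path. Both give the Rayleigh value $-1+2/(\E_{\ul{C}}[\len(\eta)]+1)$. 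Your version has the small advantage of handling isolated nodes (paths of length zero) without a separate case.

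\textbf{The case you leave open is false, not merely hard.} No alternative parity function or phase averaging can work there, since the minimum of your (Hermitian) Rayleigh quotient is exactly $\lambda_{\min}(A^{\ul{\Gamma}})+1$. The paper's proof also silently assumes that every $\ul{v}\supset\ul{u}$ has dimension of the opposite parity to $\ul{u}$. Otherwise $\sum_{\ul{v}\supset\ul{u}}(-1)^{\dim(\ul{v})}\LP(\ul{v})\neq(-1)^{\dim(\ul{u})+1}\LP(\ul{u})$, and its formula for $A^{\ul{\Gamma}}q_{\ul{C}}$ fails.

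\textbf{Counterexample.} Take $\ul{u}_0,\dots,\ul{u}_3$ with $\dim(\ul{u}_i)=i$ and edges $\ul{u}_i\subset\ul{u}_{i+1}$ and $\ul{u}_i\subset\ul{u}_{i+2}$.
\begin{itemize}
\item $\LP=(3,2,1,1)$ and $\RP=(1,1,2,3)$.
\item The three root-to-leaf paths have lengths $3,2,2$, so the right-hand side is $2/(7/3+1)=3/5$.
\item $P^{\ul{\Gamma}}$ preserves the subspace of vectors $(a,b,b,a)$, acting on $(a,b)$ by $\begin{pmatrix}1/2&1/2\\3/4&1/4\end{pmatrix}$, with eigenvalues $1$ and $-1/4$.
\item It also preserves the subspace of vectors $(a,b,-b,-a)$, acting by $\begin{pmatrix}1/2&1/6\\1/4&-1/4\end{pmatrix}$, with eigenvalues $\tfrac{1}{8}\pm\sqrt{35/192}$.
\end{itemize}
Hence $\lambda_{\min}(A^{\ul{\Gamma}})+1=\tfrac{9}{8}-\sqrt{35/192}\approx 0.698>3/5$.

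What your argument proves is the statement under the hypothesis that each quotient-component is bipartite as an undirected graph. This is automatic for strong gradings, and there any $\pm1$ bipartition function kills all edge terms. You should add that hypothesis rather than look for a way around it.
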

\begin{proof}
    If $\ul{C}$ is an isolated node, then $\frac{2}{\E_{\ul{C}}[\len(\eta)] + 1} = 2$, and the inequality is satisfied. Otherwise, consider the non-zero function $q_{\ul{C}}$ on $\ul{\Gamma}$ defined, for $\ul{u} \in \ul{X}$, as $q_{\ul{C}}(\ul{u}) = (-1)^{\dim(\ul{u})} \pi_{\ul{C}}(\ul{u})^{1/2}$.
    Then,
    \[
    \big(A^{\ul{\Gamma}}q_{\ul{C}}\big)(\ul{u}) =
    \begin{cases}
        (-1)^{\dim(\ul{u}) + 1} \pi_{\ul{C}}(\ul{u})^{1/2} & \text{if $\ul{u}$ is neither a leaf nor a root,} \\
        0 & \text{otherwise.}
    \end{cases}
    \]
    It follows that the Rayleigh quotient of the function $q_{\ul{C}}$ with respect to the operator $A^{\ul{\Gamma}}$ satisfies
    \begin{align*}
        \lambda_{\min}\big(A^{\ul{\Gamma}}\big) \le \calR_{A^{\ul{\Gamma}}}(q_{\ul{C}}) &= \frac{(q_{\ul{C}})^T A^{\ul{\Gamma}} q_{\ul{C}}}{\|q_{\ul{C}}\|^2} \\
        &= - \frac{\sum_{\ul{u} \in \ul{C} \setminus (\rmL(\ul{\Gamma}) \cup \rmR(\ul{\Gamma}))} \LP(\ul{u}) \cdot \RP(\ul{u})}{\sum_{\ul{u} \in \ul{C}} \LP(\ul{u}) \cdot \RP(\ul{u})} \\
        &= - \frac{\sum_{\eta \in \calP_{\rmR \to \rmL}(\ul{C})} (\len(\eta) - 1)}{\sum_{\eta \in \calP_{\rmR \to \rmL}(\ul{C})} (\len(\eta) + 1)} \\
        &= - \frac{\E_{\ul{C}}[\len(\eta)] - 1}{\E_{\ul{C}}[\len(\eta)] + 1} \\
        &= - 1 + \frac{2}{\E_{\ul{C}}[\len(\eta)] + 1},
    \end{align*}
    and the result follows.
\end{proof}

\section{Conditional random up- and down-walks}\label{section.conditional_random_walks}

From now on, we assume that the grading $\dim$ is strong, according to Definition \ref{definition.strongly_graded}. This means that, if $u \subset v$ is an edge of $\Gamma$, then $\dim(v) = \dim(u) + 1$. The \textbf{conditional random up-walk} on $\Gamma$, or simply \textbf{up-walk}, is described as follows. We put emphasis on the dimension $k$ of the starting node $u$, even though the description might be given simultaneously across all dimensions. A single step of the up-walk is made of two steps following the rules of the root-to-leaf path random walk, where we condition on:
\begin{itemize}
    \item If $\ul{u}$ is not a leaf: the walker being at a node $v \supset u$, necessarily of dimension $k + 1$, after the first step, and then at a node $u' \subset v$, necessarily of dimension $k$, after the second step.
    \item If $\ul{u}$ is a leaf: the walker walking to a node of the same dimension $k$, or equivalently walking uniformly to $\pm u$.
\end{itemize}
We underline that the up-walk splits across different dimensions, in the sense that a walker starting at a node of dimension $k$ will find themselves at dimension $k$ at all times. We denote by $P^{\Gamma, \up}_k$ the $2N_k \times 2N_k$ transition matrix of the up-walk in dimension $k$, that is, for $u, u' \in X_k$:
\[
\big(P^{\Gamma, \up}_k\big)_{uu'} = \begin{cases}
    \sum_{\substack{v \supset u, u' \\ [v : u] = 1, [v : u'] = -1}} \LP(\ul{v}) / \LP(\ul{u}) \cdot \RP(\ul{u'}) / \RP(\ul{v}) & \text{if $\ul{u} \sim^\up_= \ul{u'}$,} \\
    1/2 & \text{if $\ul{u} = \ul{u'}$ is a leaf,} \\
    0 & \text{otherwise.}
\end{cases}
\]
The induced up-walk in dimension $k$ on the quotient has, as $N_k \times N_k$ transition matrix,
\[
\big(P^{\ul{\Gamma}, \up}_k\big)_{\ul{u}\ul{u'}} = \begin{cases}
    \sum_{\ul{v} \supset \ul{u}, \ul{u'}} \LP(\ul{v}) / \LP(\ul{u}) \cdot \RP(\ul{u'}) / \RP(\ul{v}) & \text{if $\ul{u} \sim^\up_= \ul{u'}$,} \\
    1 & \text{if $\ul{u} = \ul{u'}$ is a leaf,} \\
    0 & \text{otherwise.}
\end{cases}
\]
Dually, we define the \textbf{conditional random down-walk} on $\Gamma$, or simply \textbf{down-walk}. A single step of the down-walk is made of two steps following the rules of the root-to-leaf path random walk, where we condition on:
\begin{itemize}
    \item If $\ul{u}$ is not a root: the walker being at a node $t \subset u$, necessarily of dimension $k - 1$, after the first step, and then at a node $u' \supset t$, necessarily of dimension $k$, after the second step.
    \item If $\ul{u}$ is a root: the walker walking to a node of the same dimension $k$, or equivalently walking uniformly to $\pm u$.
\end{itemize}
As for the up-case, the down-walk splits across different dimensions, in the sense that a walker starting at a node of dimension $k$ will find themselves at dimension $k$ at all times. We denote by $P^{\Gamma, \down}_k$ the $2N_k \times 2N_k$ transition matrix of the down-walk in dimension $k$, that is, for $u, u' \in X_k$:
\[
\big(P^{\Gamma, \down}_k\big)_{uu'} = \begin{cases}
    \sum_{\substack{t \subset u, u' \\ [u : t] = -1, [u' : t] = 1}} \RP(\ul{t}) / \RP(\ul{u}) \cdot \LP(\ul{u'}) / \LP(\ul{t}) & \text{if $\ul{u} \sim^\down_= \ul{u'}$,} \\
    1/2 & \text{if $\ul{u} = \ul{u'}$ is a root,} \\
    0 & \text{otherwise.}
\end{cases}
\]
The induced down-walk in dimension $k$ on the quotient has, as $N_k \times N_k$ transition matrix,
\[
\big(P^{\ul{\Gamma}, \down}_k\big)_{\ul{u}\ul{u'}} = \begin{cases}
    \sum_{\ul{t} \subset \ul{u}, \ul{u'}} \RP(\ul{t}) / \RP(\ul{u}) \cdot \LP(\ul{u'}) / \LP(\ul{t}) & \text{if $\ul{u} \sim^\down_= \ul{u'}$,} \\
    1 & \text{if $\ul{u} = \ul{u'}$ is a root,} \\
    0 & \text{otherwise.}
\end{cases}
\]
\begin{proposition}\label{proposition.existence_uniqueness_stationary_distributions_up_down}
    The conditional random up-walk in dimension $k$ on $\Gamma$ splits, as a random process, into distinct random walks on the cover-up-components in dimension $k$. On each of these cover-up-components, the random walk is irreducible. As a consequence, each cover-up-component induces a unique stationary distribution of the random walk supported on it. These statements similarly extend to the quotient $\ul{\Gamma}$. In addition, the up-walk in dimension $k$ induced on the quotient $\ul{\Gamma}$ is aperiodic on each of the quotient-up-components. Therefore, on a quotient-up-component, each initial probability distribution converges to the corresponding stationary distribution. All of the above also holds when replacing ``up'' with ``down''.
\end{proposition}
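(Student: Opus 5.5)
The plan follows the template of Proposition \ref{proposition.existence_uniqueness_stationary_distributions}, now with transitions of the up-walk rather than single steps of the root-to-leaf walk. I will treat only the up-case; the down-case follows by exchanging the roles of $\subset$/$\supset$, $\LP$/$\RP$ and leaves/roots.

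\emph{Splitting into cover-up-components.} From the explicit form of $P^{\Gamma,\up}_k$, a nonzero entry $(P^{\Gamma,\up}_k)_{uu'}$ requires one of two things. Either $\ul{u}$ is a leaf and $u' = \pm u$. Or $\ul{u}$ is not a leaf and there is $v \supset u, u'$ with $[v:u]=1$ and $[v:u']=-1$, so $u$ and $u'$ are up-adjacent or equal in $\Gamma$. In the first case the walker stays inside the involutory pair $\{u,-u\} \subset X_k \cap \rmL(\Gamma)$. In the second case it stays inside the maximally up-connected subset of $X_k \setminus \rmL(\Gamma)$ containing $u$; note that if $\ul{u}$ is not a leaf, neither is any $\ul{u'} \subset \ul{v}$ with $\ul{v}\supset\ul{u}$. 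Hence the chain decomposes along cover-up-components, and by Lemma \ref{lemma.components_of_Gamma}(2) these correspond to the quotient-up-components.

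\emph{Irreducibility.} On a leaf pair, both $u \to u$ and $u \to -u$ have probability $1/2$. On a non-leaf cover-up-component $C_k$, I check that every up-adjacency $u \sim^\up u'$ in $\Gamma$ inside $C_k$ gives a positive transition. If $v \supset u, u'$, then replacing $v$ by $-v$ if necessary gives $[v:u]=1$. The edge $v \supset u'$ then has some sign. If it is $-1$, the transition $u\to u'$ has positive weight, since $\LP$ and $\RP$ are positive. If it is $+1$, then $[v:-u']=-1$, so $u \to -u'$ is possible. Moreover $u \to -u$ is always possible: take any $v\supset u$ with $[v:u]=1$, so $[v:-u]=-1$, and the diagonal term $\ul{u}\sim^\up_=\ul{u}$ gives positive mass to $-u$.

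So from $u$ one reaches both $\pm u'$ for every up-neighbour $u'$. Up-connectedness of $C_k$ (transported through the projection) then gives irreducibility. Uniqueness of the stationary distribution supported on each component follows from standard finite Markov chain theory. The same argument, without signs, works on the quotient.

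\emph{Aperiodicity on the quotient.} The quotient up-walk has $(P^{\ul{\Gamma},\up}_k)_{\ul{u}\ul{u}} = \sum_{\ul{v}\supset\ul{u}} \LP(\ul{v})\RP(\ul{u})/(\LP(\ul{u})\RP(\ul{v})) > 0$ for non-leaves, and the entry equals $1$ for leaves. Every state therefore has a self-loop, the chain is aperiodic, and convergence follows from the fundamental theorem of Markov chains as in Proposition \ref{proposition.existence_uniqueness_stationary_distributions}.

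The main obstacle is the sign bookkeeping in the irreducibility step on the cover, namely showing that the sign constraints never block the walk from reaching the whole cover-up-component. The flip $u\to -u$ is what makes this work. By contrast, aperiodicity on the cover is deliberately not claimed, since an odd chain can be periodic there, and no argument is needed for it.
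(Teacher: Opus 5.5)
Your proposal is correct and follows the paper's route. The paper's proof just says the argument is analogous to Proposition \ref{proposition.existence_uniqueness_stationary_distributions}, and that the positive one-step return probability on the quotient gives aperiodicity; your explicit sign bookkeeping on the cover, in particular that $u \to -u$ always has positive probability through any supface when $u$ is not a leaf, supplies exactly the detail the paper leaves implicit.
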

\begin{remark}\label{remark.up_down_walk_not_aperiodic}
    The up-walk on a cover-up-component is not necessarily aperiodic (see Theorem \ref{theorem.-1_eigenvalue}). The same holds for the down-walk.
\end{remark}
\begin{proof}
    The proof is analogous to the proof of Proposition \ref{proposition.existence_uniqueness_stationary_distributions}. Notice that, on the quotient, there is a positive probability of returning at the same node in one step, and therefore the up- and down-walks are aperiodic on each quotient-up- and -down-component.
\end{proof}
\begin{proposition}\label{proposition.stationary_distribution_is_induced_up_down}
    The stationary probability distribution $\pi_{C_k}$ for the up-walk in dimension $k$ associated with a cover-up-component $C_k$ of $\Gamma$ is induced by the stationary probability distribution $\pi_{\ul{C}_k}$ associated with its projection $\ul{C}_k$ in $\ul{\Gamma}$, via the relation
    \[
    \pi_{C_k}(u) = \frac{\pi_{\ul{C}_k}(\ul{u})}{2}.
    \]
    Moreover, the same is true for a generic stationary distribution $\pi$ on $\Gamma$. The same result holds when replacing ``up'' with ``down''.
\end{proposition}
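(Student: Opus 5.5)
We mirror the proof of Proposition \ref{proposition.stationary_distribution_is_induced}, replacing $P^\Gamma$ by the up-walk transition matrix $P^{\Gamma,\up}_k$. Let $R_k$ denote the $2N_k \times 2N_k$ restriction of the involution matrix $R$ to $X_k$; it is symmetric, and it preserves every cover-up-component, because a cover-up-component is either an involutory pair of leaves or a maximally up-connected subset of $X_k\setminus \rmL(\Gamma)$. In the second case, $u\sim^\up u'$ implies $-u\sim^\up u'$ by the compatibility of $E$ with $-$.

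The key step is the commutation relation $P^{\Gamma,\up}_k R_k = R_k P^{\Gamma,\up}_k$, that is, $(P^{\Gamma,\up}_k)_{-u,-u'} = (P^{\Gamma,\up}_k)_{uu'}$. We verify this entrywise from the explicit formula.

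\begin{itemize}
    \item \emph{Leaf entry.} The case ``$\ul{u}=\ul{u'}$ is a leaf'' depends only on the projections, so it is invariant.
    \item \emph{Up-adjacent entry.} The sum runs over $v\supset u,u'$ with $[v:u]=1$ and $[v:u']=-1$. The map $v\mapsto -v$ is a bijection onto the index set for the pair $(-u,-u')$, because $[-v:-u]=[v:u]$ and $[-v:-u']=[v:u']$. The summands depend only on $\ul{u},\ul{u'},\ul{v}$, hence are unchanged.
\end{itemize}

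An alternative route is to note that the root-to-leaf path walk already satisfies $P^\Gamma R=RP^\Gamma$, and that the conditioning events (ending in dimension $k$, passing through dimension $k+1$) are $R$-invariant, so conditioning preserves the symmetry. The explicit check above is more direct.

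Given the commutation, let $\pi_{C_k}$ be the unique stationary distribution on $C_k$, which exists by Proposition \ref{proposition.existence_uniqueness_stationary_distributions_up_down}. The same chain of equalities as before gives $(R_k\pi_{C_k})^T P^{\Gamma,\up}_k = (R_k\pi_{C_k})^T$. Since $R_k$ preserves $C_k$, $R_k\pi_{C_k}$ is again a probability distribution supported on $C_k$, so uniqueness gives $R_k\pi_{C_k}=\pi_{C_k}$. Thus $\pi_{C_k}$ is even, and it descends to a distribution $\tilde\pi$ on $\ul{C}_k$, with $\tilde\pi(\ul{u}) = 2\pi_{C_k}(u)$; the factor $2$ accounts for total mass.

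We must still check that $\tilde\pi$ is stationary for $P^{\ul{\Gamma},\up}_k$. The key identity is $(P^{\ul{\Gamma},\up}_k)_{\ul{u}\ul{u'}} = (P^{\Gamma,\up}_k)_{uu'}+(P^{\Gamma,\up}_k)_{u,-u'}$.

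\begin{itemize}
    \item \emph{Leaf entry.} For a leaf, $\tfrac12+\tfrac12=1$.
    \item \emph{Up-adjacent entry.} Each $\ul{v}\supset\ul{u},\ul{u'}$ has exactly one lift $v$ with $[v:u]=1$. That $v$ then satisfies $[v:u']=-1$ for exactly one of $u'$ and $-u'$.
\end{itemize}

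In matrix form this reads $Q^\sym P^{\Gamma,\up}_k = P^{\ul{\Gamma},\up}_k Q^\sym$, with $Q^\sym$ restricted to dimension $k$. Pushing forward the stationarity of $\pi_{C_k}$ then shows that $\tilde\pi$ is stationary. By uniqueness on the quotient-up-component $\ul{C}_k$ (Lemma \ref{lemma.components_of_Gamma}, item 2, together with Proposition \ref{proposition.existence_uniqueness_stationary_distributions_up_down}), $\tilde\pi=\pi_{\ul{C}_k}$.

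A generic stationary $\pi$ is a convex combination of the $\pi_{C_k}$, so the same relation holds for it. The down case is dual: swap $\supset$ with $\subset$, leaves with roots, and the roles of the signs $[u:t]=-1$ and $[u':t]=1$.

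The main obstacle is only bookkeeping. One must ensure that the case ``$\ul{u}=\ul{u'}$ not a leaf'' inside $\sim^\up_=$ (where $u'=\pm u$) is handled by the same bijection argument, which it is, since nothing there uses $\ul{u}\ne\ul{u'}$.
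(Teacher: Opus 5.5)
Your proposal is correct and follows the paper's route. The paper's proof simply defers to that of Proposition \ref{proposition.stationary_distribution_is_induced}: commutation with $R$, uniqueness forcing $R_k\pi_{C_k}=\pi_{C_k}$, descent to the quotient with the factor $2$, and convex combinations. You go further by verifying the commutation entrywise and checking that the descended distribution is genuinely stationary for $P^{\ul{\Gamma},\up}_k$, which the paper leaves implicit.

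One small slip: written as a matrix identity, your key identity is $P^{\Gamma,\up}_k (Q^\sym)^T = (Q^\sym)^T P^{\ul{\Gamma},\up}_k$, which is the form needed to push forward row-vector stationarity, rather than $Q^\sym P^{\Gamma,\up}_k = P^{\ul{\Gamma},\up}_k Q^\sym$. The latter also holds by your $R_k$-commutation (and suffices here since $\pi_{C_k}$ is even), so nothing breaks.
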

\begin{proof}
    The proof is analogous to the proof of Proposition \ref{proposition.stationary_distribution_is_induced}.
\end{proof}
The following result shows that the restrictions of stationary distributions of the root-to-leaf path random walk to the set of nodes of dimension $k$ induce, in fact, stationary distributions of up- and down-walks:
\begin{theorem}\label{theorem.explicit_description_stationary_distribution_quotient_up_down}
    For each quotient-up-component $\ul{C}_k$ in dimension $k$ of $\ul{\Gamma}$, the probability distribution $\pi_{\ul{C}_k}$ given by
    \[
    \pi_{\ul{C}_k}(\ul{u}) = \frac{\LP(\ul{u}) \cdot \RP(\ul{u})}{K_{\ul{C}_k}},
    \]
    for $\ul{u} \in \ul{C}_k$, satisfies the Markov chain reversibility condition for the up-walk on $\ul{\Gamma}$, where $K_{\ul{C}_k}$ is the normalizing constant
    \[
    K_{\ul{C}_k} = | \Set{ \eta \in \calP_{\rmR \to \rmL} | \eta \ \text{contains a node in $\ul{C}_k$}} |.
    \]
    As a consequence, $\pi_{\ul{C}_k}$ is the stationary distribution associated with $\ul{C}_k$. Moreover, the induced probability distribution $\pi_{C_k}$, described as
    \[
    \pi_{C_k}(u) = \frac{\pi_{\ul{C}_k}(\ul{u})}{2} = \frac{\LP(\ul{u}) \cdot \RP(\ul{u})}{2 \cdot K_{\ul{C}_k}},
    \]
    also satisfies the Markov chain reversibility condition for the up-walk on $\Gamma$. All of the above also holds when replacing ``up'' with ``down''.
\end{theorem}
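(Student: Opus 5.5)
Three things need checking: detailed balance for the quotient up-walk with weights $\LP\cdot\RP$, the value of the normalizing constant, and detailed balance on the cover. Throughout, the grading is strong.

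\textbf{Detailed balance on the quotient.} Take $\ul{u}\neq\ul{u'}$ in $\ul{C}_k$ with $\ul{u}\sim^\up\ul{u'}$. Multiplying the transition probability by the weight of $\ul{u}$ gives
\[
\LP(\ul{u})\RP(\ul{u})\big(P^{\ul{\Gamma},\up}_k\big)_{\ul{u}\ul{u'}} = \RP(\ul{u})\RP(\ul{u'})\sum_{\ul{v}\supset\ul{u},\ul{u'}} \frac{\LP(\ul{v})}{\RP(\ul{v})}.
\]
This expression is symmetric in $\ul{u},\ul{u'}$, so the reversibility condition holds. Diagonal entries and leaves need no check. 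Transitions never leave $\ul{C}_k$, so reversibility on $\ul{C}_k$ makes $\pi_{\ul{C}_k}$ stationary. By Proposition \ref{proposition.existence_uniqueness_stationary_distributions_up_down} it is the unique stationary distribution on $\ul{C}_k$, once we confirm it is normalized.

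\textbf{Normalizing constant.} We need $\sum_{\ul{u}\in\ul{C}_k}\LP(\ul{u})\RP(\ul{u})$ to equal the number of root-to-leaf paths meeting $\ul{C}_k$. By Corollary \ref{corollary.product_LP_RP_is_root_to_leaf_paths}, the sum counts pairs $(\eta,\ul{u})$ with $\ul{u}\in\eta\cap\ul{C}_k$. Because the grading is strong, an ascending path has strictly increasing dimension and so passes through at most one node of dimension $k$. Hence each path meeting $\ul{C}_k$ is counted exactly once. This double count is the only point where strong grading is essential, and it is the step to handle carefully.

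\textbf{Cover and the down-case.} On $\Gamma$, take $u\neq u'$ with $\ul{u}\sim^\up_=\ul{u'}$. The entry $\big(P^{\Gamma,\up}_k\big)_{uu'}$ sums over $v$ with $[v:u]=1$ and $[v:u']=-1$. For each $\ul{v}\supset\ul{u},\ul{u'}$, write $\epsilon=[v:u][v:u']$ for a lift $v$; this product does not depend on the lift. Exactly one lift $v$ of $\ul{v}$ contributes to $(u,u')$ precisely when $\epsilon=-1$. The reverse entry $(u',u)$ requires $[v:u']=1$ and $[v:u]=-1$, which again holds for exactly one lift precisely when $\epsilon=-1$. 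So the index sets correspond, the summands satisfy the same symmetric identity as in the quotient, and with weights $\LP\cdot\RP/(2K_{\ul{C}_k})$ the reversibility condition holds.

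Leaf pairs $\pm u$ carry $1/2$ in both directions and equal weights, so they satisfy the condition trivially. The case $u'=-u$ for $\ul{u}$ not a leaf is covered by the same lift argument.

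The down-walk statements follow by exchanging $\LP\leftrightarrow\RP$ and $\supset\leftrightarrow\subset$ throughout.
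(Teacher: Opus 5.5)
Your proof is correct and follows essentially the paper's route: detailed balance for the weights $\LP\cdot\RP$ as in Theorem \ref{theorem.explicit_description_stationary_distribution_quotient}, the same double count for $K_{\ul{C}_k}$, and the observation that reversibility also holds on the cover, which you make explicit through the lift-independent sign $[v:u]\cdot[v:u']$. One small correction: strong grading is not what makes the double count work, since every edge already strictly raises dimension (Definition \ref{definition.double_covers_graded_signed_graphs}), so any ascending path meets $\ul{X}_k$ at most once; strong grading is needed instead so that the up-walk stays in dimension $k$ and up-adjacency is defined.
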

\begin{proof}
    The proof is analogous to the proof of Theorem \ref{theorem.explicit_description_stationary_distribution_quotient_up_down}. For up- and down-walks, the reversibility condition is verified not only on the quotient, but also on the cover.
\end{proof}
As a consequence, the four operators
\begin{alignat*}{3}
A_k^{\Gamma, \up}    & \quad = \quad & \big(D_k^\Gamma\big)^{-1/2} \big(P_k^{\Gamma, \up}\big)^T \big(D_k^\Gamma\big)^{1/2} 
                      & \quad = \quad & \big(D_k^\Gamma\big)^{1/2} P_k^{\Gamma, \up} \big(D_k^\Gamma\big)^{-1/2}, \\
A_k^{\Gamma, \down}  & \quad = \quad & \big(D_k^\Gamma\big)^{-1/2} \big(P_k^{\Gamma, \down}\big)^T \big(D_k^\Gamma\big)^{1/2} 
                      & \quad = \quad & \big(D_k^\Gamma\big)^{1/2} P_k^{\Gamma, \down} \big(D_k^\Gamma\big)^{-1/2}, \\
A_k^{\ul{\Gamma}, \up} & \quad = \quad & \big(D_k^{\ul{\Gamma}}\big)^{-1/2} \big(P_k^{\ul{\Gamma}, \up}\big)^T \big(D_k^{\ul{\Gamma}}\big)^{1/2} 
                      & \quad = \quad & \big(D_k^{\ul{\Gamma}}\big)^{1/2} P_k^{\ul{\Gamma}, \up} \big(D_k^{\ul{\Gamma}}\big)^{-1/2}, \\
A_k^{\ul{\Gamma}, \down} & \quad = \quad & \big(D_k^{\ul{\Gamma}}\big)^{-1/2} \big(P_k^{\ul{\Gamma}, \down}\big)^T \big(D_k^{\ul{\Gamma}}\big)^{1/2} 
                      & \quad = \quad & \big(D_k^{\ul{\Gamma}}\big)^{1/2} P_k^{\ul{\Gamma}, \down} \big(D_k^{\ul{\Gamma}}\big)^{-1/2}.
\end{alignat*}
are all symmetric. Here $D_k^\Gamma$ and $D_k^{\ul{\Gamma}}$ are the $2N_k \times 2N_k$ and $N_k \times N_k$ restrictions of $D^\Gamma$ and $D^{\ul{\Gamma}}$ to $X_k$ and $\ul{X}_k$, respectively. More explicitly, these operators can be written as
\[
\big(A^{\Gamma, \up}_k\big)_{uu'} = \begin{cases}
    H(\ul{u})^{-1/2} \cdot H(\ul{u'})^{-1/2} \cdot \sum_{\substack{v \supset u, u' \\ [v : u] = 1, [v : u'] = -1}} H(\ul{v}) & \text{if $\ul{u} \sim^\up_= \ul{u'}$,} \\
    1/2 & \text{if $\ul{u} = \ul{u'}$ is a leaf,} \\
    0 & \text{otherwise,}
\end{cases}
\]
\[
\big(A^{\Gamma, \down}_k\big)_{uu'} = \begin{cases}
    H(\ul{u})^{1/2} \cdot H(\ul{u'})^{1/2} \cdot \sum_{\substack{t \subset u, u' \\ [u : t] = -1, [u' : t] = 1}} H(\ul{t})^{-1} & \text{if $\ul{u} \sim^\down_= \ul{u'}$,} \\
    1/2 & \text{if $\ul{u} = \ul{u'}$ is a root,} \\
    0 & \text{otherwise,}
\end{cases}
\]
\[
\big(A^{\ul{\Gamma}, \up}_k\big)_{\ul{u}\ul{u'}} = \begin{cases}
    H(\ul{u})^{-1/2} \cdot H(\ul{u'})^{-1/2} \cdot \sum_{\ul{v} \supset \ul{u}, \ul{u'}} H(\ul{v}) & \text{if $\ul{u} \sim^\up_= \ul{u'}$,} \\
    1 & \text{if $\ul{u} = \ul{u'}$ is a leaf,} \\
    0 & \text{otherwise,}
\end{cases}
\]
and
\[
\big(A^{\ul{\Gamma}, \down}_k\big)_{\ul{u}\ul{u'}} = \begin{cases}
    H(\ul{u})^{1/2} \cdot H(\ul{u'})^{1/2} \cdot \sum_{\ul{t} \subset \ul{u}, \ul{u'}} H(\ul{t})^{-1} & \text{if $\ul{u} \sim^\down_= \ul{u'}$,} \\
    1 & \text{if $\ul{u} = \ul{u'}$ is a root,} \\
    0 & \text{otherwise.}
\end{cases}
\]
Following the analogy with the operators of the non-conditional root-to-leaf path random walk, it is natural to also introduce, given an orientation $\calO$ and $u, u' \in \calO(\ul{X}_k)$,
\[
\big(A^{\calO(\ul{\Gamma}), \up}_k\big)_{uu'} = \begin{cases}
    - H(\ul{u})^{-1/2} \cdot H(\ul{u'})^{-1/2} \\
    \qquad \qquad \cdot \sum_{\ul{v} \supset \ul{u}, \ul{u'}} [v:u] \cdot [v:u'] \cdot H(\ul{v}) & \text{if $\ul{u} \sim^\up_= \ul{u'}$,} \\
    0 & \text{otherwise,}
\end{cases}
\]
and
\[
\big(A^{\calO(\ul{\Gamma}), \down}_k\big)_{uu'} = \begin{cases}
    - H(\ul{u})^{1/2} \cdot H(\ul{u'})^{1/2} \\
    \qquad \qquad \cdot \sum_{\ul{t} \subset \ul{u}, \ul{u'}} [u:t] \cdot [u':t] \cdot H(\ul{t})^{-1} & \text{if $\ul{u} \sim^\down_= \ul{u'}$,} \\
    0 & \text{otherwise.}
\end{cases}
\]
Notice that the minus signs in these definitions agree with the fact that, in the rules of up- and down-walks, a coherently oriented face is chosen when moving from dimension $k$ to $k+1$, or from $k-1$ to $k$, and a non-coherently oriented face when moving from dimension $k+1$ to $k$, or from $k$ to $k-1$. Combining two choices always results in a one-step update presenting a minus sign. \\

We now consider the operators $\delta^\Gamma, \delta^{\Gamma, \sym}, \delta^{\Gamma, \alt}, \delta^{\ul{\Gamma}}$, and $\delta^{\calO(\ul{\Gamma})}$, and their decompositions
\[
\delta^\Gamma = \oplus_k \delta_k^\Gamma, \quad \delta^{\Gamma, \sym} = \oplus_k \delta_k^{\Gamma, \sym}, \quad \delta^{\Gamma, \alt} = \oplus_k \delta_k^{\Gamma, \alt},
\]
\[
\delta^{\ul{\Gamma}} = \oplus_k \delta_k^{\ul{\Gamma}}, \quad \delta^{\calO(\ul{\Gamma})} = \oplus_k \delta_k^{\calO(\ul{\Gamma})},
\]
where indexing by $k$ means restricting an operator to $X_k$, $\ul{X}_k$, or $\calO(\ul{X}_k)$. This means that the column index of each of these $k$-indexed matrix runs through nodes of dimension $k$ of their corresponding geometric space. Since the grading is strong, these operators respect the grading, in the sense that they are grading-shifting:
\[
\delta_k^\Gamma, \delta_k^{\Gamma, \sym}, \delta_k^{\Gamma, \alt} \colon \scrF_k^\Gamma \to \scrF_{k+1}^\Gamma, \quad \delta_k^{\ul{\Gamma}} \colon \scrF_k^{\ul{\Gamma}} \to \scrF_{k+1}^{\ul{\Gamma}}, \quad \delta_k^{\calO(\ul{\Gamma})} \colon \scrF_k^{\calO(\ul{\Gamma})} \to \scrF_{k+1}^{\calO(\ul{\Gamma})},
\]
where indexing by $k$ on function spaces means considering functions supported on $X_k$, $\ul{X}_k$, or $\calO(\ul{X}_k)$. In other words, the row index of each of these $k$-indexed matrices runs through nodes of dimension $k+1$ of their corresponding geometric space. A similar decomposition holds for the operators $\Theta_\rmL^\Gamma, \Theta_\rmR^\Gamma, \Pi_\rmL^{\ul{\Gamma}}$, and $\Pi_\rmR^{\ul{\Gamma}}$:
\[
\Theta_\rmL^\Gamma = \oplus_k \Theta_{\rmL, k}^\Gamma, \quad \Theta_\rmR^\Gamma = \oplus_k \Theta_{\rmR, k}^\Gamma, \quad \Pi_\rmL^{\ul{\Gamma}} = \oplus_k \Pi_{\rmL, k}^{\ul{\Gamma}}, \quad \Pi_\rmR^{\ul{\Gamma}} = \oplus_k \Pi_{\rmR, k}^{\ul{\Gamma}}.
\]
Again, indexing by $k$ means restricting an operator to $X_k$, $\ul{X}_k$, or $\calO(\ul{X}_k)$. In contrast with the grading-shifting operator previously considered, these four operators are grading-preserving, that is:
\[
\Theta_{\rmL, k}^\Gamma, \Theta_{\rmR, k}^\Gamma \colon \scrF_k^\Gamma \to \scrF_k^\Gamma, \quad \Pi_{\rmL, k}^{\ul{\Gamma}}, \Pi_{\rmR, k}^{\ul{\Gamma}} \colon \scrF_k^{\ul{\Gamma}} \to \scrF_k^{\ul{\Gamma}}.
\]
In particular, both row and column indices of each of these $k$-indexed operators run through nodes of the same dimension $k$ of their corresponding geometric space. We can then rewrite the following six symmetric operators, by decomposing them into their positive semi-definitive and negative semi-definitive components:
\begin{alignat*}{3}
A_k^{\Gamma, \up}    & \quad = \quad & \big(\delta_k^\Gamma\big)^T \delta_k^\Gamma + \Theta_{\rmL, k}^\Gamma 
                      & \quad = \quad & \underbrace{\big(\delta_k^{\Gamma, \sym}\big)^T \delta_k^{\Gamma, \sym} + \Theta_{\rmL, k}^\Gamma}_{A_k^{\Gamma, \up, +}} 
                      & \underbrace{- \big(\delta_k^{\Gamma, \alt}\big)^T \delta_k^{\Gamma, \alt}}_{A_k^{\Gamma, \up, -}}, \\
A_k^{\Gamma, \down}  & \quad = \quad & \delta_{k-1}^\Gamma \big(\delta_{k-1}^\Gamma\big)^T + \Theta_{\rmR, k}^\Gamma 
                      & \quad = \quad & \underbrace{\delta_{k-1}^{\Gamma, \sym} \big(\delta_{k-1}^{\Gamma, \sym}\big)^T + \Theta_{\rmR, k}^\Gamma}_{A_k^{\Gamma, \down, +}} 
                      & \underbrace{- \delta_{k-1}^{\Gamma, \alt} \big(\delta_{k-1}^{\Gamma, \alt}\big)^T}_{A_k^{\Gamma, \down, -}}.
\end{alignat*}
\vspace{-0.3cm}
\begin{alignat*}{3}
A_k^{\ul{\Gamma}, \up} & = & \big(\delta_k^{\ul{\Gamma}}\big)^T \delta_k^{\ul{\Gamma}} + \Pi_{\rmL, k}^{\ul\Gamma}, & \qquad &
A_k^{\ul{\Gamma}, \down} & = \delta_{k-1}^{\ul{\Gamma}} \big(\delta_{k-1}^{\ul{\Gamma}}\big)^T + \Pi_{\rmR, k}^{\ul\Gamma}, \\
A_k^{\calO(\ul{\Gamma}), \up} & = & - \big(\delta_k^{\calO(\ul{\Gamma})}\big)^T \delta_k^{\calO(\ul{\Gamma})}, & \qquad &
A_k^{\calO(\ul{\Gamma}), \down} & = - \delta_{k-1}^{\calO(\ul{\Gamma})} \big(\delta_{k-1}^{\calO(\ul{\Gamma})}\big)^T.
\end{alignat*}
Notice that the matrices $A_k^{\Gamma, \up, +}, A_k^{\Gamma, \up, -}, A_k^{\Gamma, \down, +}$ and $A_k^{\Gamma, \down, -}$ can also be written as
\begin{alignat*}{3}
A_k^{\Gamma, \up, +}    & \quad = \quad & \frac{A_k^{\Gamma, \up} + A_k^{\Gamma, \up} R_k}{2} 
                      & \quad = \quad & \frac{A_k^{\Gamma, \up} + R_k A_k^{\Gamma, \up}}{2}, \\
A_k^{\Gamma, \up, -}    & \quad = \quad & \frac{A_k^{\Gamma, \up} - A_k^{\Gamma, \up} R_k}{2} 
                      & \quad = \quad & \frac{A_k^{\Gamma, \up} - R_k A_k^{\Gamma, \up}}{2}, \\
A_k^{\Gamma, \down, +}  & \quad = \quad & \frac{A_k^{\Gamma, \down} + A_k^{\Gamma, \down} R_k}{2} 
                      & \quad = \quad & \frac{A_k^{\Gamma, \down} + R_k A_k^{\Gamma, \down}}{2}, \\
A_k^{\Gamma, \down, -}  & \quad = \quad & \frac{A_k^{\Gamma, \down} - A_k^{\Gamma, \down} R_k}{2} 
                      & \quad = \quad & \frac{A_k^{\Gamma, \down} - R_k A_k^{\Gamma, \down}}{2},
\end{alignat*}
where $R_k$ is the restriction of $R$ to nodes of dimension $k$. \\

Items 1 to 4 of the following theorem constitute the analogous result of Theorem \ref{theorem.spectrum_double_cover_quotient_signed_graph}, in the case of up- and down-walks. Additionally, we also prove items 5 and 6, which relate up- and down-walks to each other.
\begin{theorem}\label{theorem.spectrum_up_down_double_cover_quotient_signed_graph}
\begin{enumerate}
    \item We have that
    \begin{alignat*}{2}
    A_k^{\Gamma, \up, +}(\scrF_k^{\Gamma, \sym}) &\subseteq \scrF_k^{\Gamma, \sym}, & \qquad 
    A_k^{\Gamma, \up, +}(\scrF_k^{\Gamma, \alt}) &= 0, \\
    A_k^{\Gamma, \up, -}(\scrF_k^{\Gamma, \sym}) &= 0, & \qquad 
    A_k^{\Gamma, \up, -}(\scrF_k^{\Gamma, \alt}) &\subseteq \scrF_k^{\Gamma, \alt}.
    \end{alignat*}
    \item As a consequence, the spectrum of $A_k^{\Gamma, \up}$ splits as
    \[
    \Sp\big(A_k^{\Gamma, \up}\big) = \Sp\big(A_k^{\Gamma, \up, +}|_{\scrF_k^{\Gamma, \sym}}\big) \amalg \Sp\big(A_k^{\Gamma, \up, -}|_{\scrF_k^{\Gamma, \alt}}\big),
    \]
    where the disjoint union also counts multiplicities. Each eigenfunction of $A_k^{\Gamma, \up, +}|_{\scrF_k^{\Gamma, \sym}}$ or of $A_k^{\Gamma, \up, -}|_{\scrF_k^{\Gamma, \alt}}$ is also an eigenfunction of $A_k^{\Gamma, \up}$, relative to the same eigenvalue.
    \item The operator $A_k^{\Gamma, \up, +}|_{\scrF_k^{\Gamma, \sym}}$ is symmetric (and therefore diagonalizable) and positive semi-definite, so that $\Sp(A_k^{\Gamma, \up, +}|_{\scrF_k^{\Gamma, \sym}}) \subseteq [0, 1]$. On the other hand, the operator $A_k^{\Gamma, \up, -}|_{\scrF_k^{\Gamma, \alt}}$ is also symmetric (and therefore diagonalizable) and negative semi-definite, so that $\Sp(A_k^{\Gamma, \up, -}|_{\scrF_k^{\Gamma, \alt}}) \subseteq [-1, 0]$.
    \item We have that, counting multiplicities,
    \[
    \Sp\big(A_k^{\Gamma, \up, +}|_{\scrF_k^{\Gamma, \sym}}\big) = \Sp\big(A_k^{\ul{\Gamma}, \up}\big),
    \]
    and pulling-back via the quotient map $\ul{ \ \cdot \ }$ maps eigenfunctions of $A_k^{\ul{\Gamma}, \up}$ to eigenfunctions of $A_k^{\Gamma, \up, +}|_{\scrF_k^{\Gamma, \sym}}$ relative to the same eigenvalue. Similarly, counting multiplicities,
    \[
    \Sp\big(A_k^{\Gamma, \up, -}|_{\scrF_k^{\Gamma, \alt}}\big) = \Sp\big(A_k^{\calO(\ul{\Gamma}), \up}\big),
    \]
    and oddly extending a function defined on $\calO(\ul{X}_k)$ to $X_k$ maps eigenfunctions of $A_k^{\calO(\ul{\Gamma}), \up}$ to eigenfunctions of $A_k^{\Gamma, \up, -}|_{\scrF_k^{\Gamma, \alt}}$ relative to the same eigenvalue.
    \vspace{0.1cm}
    \end{enumerate}
Statements 1 to 4 also holds when replacing ``up'' with ``down''. Additionally:
\vspace{-0.1cm}
\begin{enumerate}
    \item[5.] The operator $\delta_{k-1}^{\Gamma, +}$ maps eigenfunctions of $A_{k-1}^{\Gamma, \up, +}$ relative to a non-zero eigenvalue and supported on $X_{k-1} \setminus \rmL(\Gamma)$ to eigenfunctions of $A_k^{\Gamma, \down, +}$ relative to the same non-zero eigenvalue and supported on $X_k \setminus \rmR(\Gamma)$. The same is true in reversed order, through the operator $(\delta_{k-1}^{\Gamma, +})^T$. An analogous statement holds for $\delta_{k-1}^{\ul{\Gamma}}$, $A_{k-1}^{\ul{\Gamma}, \up}$, and $A_k^{\ul{\Gamma}, \down}$.
    \item[6.] The operator $\delta_{k-1}^{\Gamma, -}$ maps eigenfunctions of $A_{k-1}^{\Gamma, \up, -}$ relative to a non-zero eigenvalue to eigenfunctions of $A_k^{\Gamma, \down, -}$ relative to the same non-zero eigenvalue. The same is true in reversed order, through the operator $(\delta_{k-1}^{\Gamma, -})^T$. An analogous statement holds for $\delta_{k-1}^{\calO(\ul{\Gamma})}$, $A_{k-1}^{\calO(\ul{\Gamma}), \up}$, and $A_k^{\calO(\ul{\Gamma}), \down}$.
\end{enumerate}
\end{theorem}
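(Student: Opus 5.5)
Items 1 to 4 follow the proof of Theorem \ref{theorem.spectrum_double_cover_quotient_signed_graph}, and items 5 and 6 follow the classical intertwining argument for $B^T B$ and $B B^T$. For item 1, I first check the commutation relations $\delta_k^{\Gamma,\sym} R_k = \delta_k^{\Gamma,\sym}$, $R_{k+1}\delta_k^{\Gamma,\sym} = \delta_k^{\Gamma,\sym}$, $\delta_k^{\Gamma,\alt} R_k = -\delta_k^{\Gamma,\alt}$ and $R_{k+1}\delta_k^{\Gamma,\alt} = -\delta_k^{\Gamma,\alt}$. These are read off the explicit entries: the $\sym$ entries depend only on $(\ul{v},\ul{u'})$, while the $\alt$ entries carry $[v:u']$, which changes sign under either involution. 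I also use $\Theta_{\rmL,k}^\Gamma R_k = R_k\Theta_{\rmL,k}^\Gamma = \Theta_{\rmL,k}^\Gamma$. It follows that $A_k^{\Gamma,\up,+}$ equals $A_k^{\Gamma,\up,+}\tfrac{I+R_k}{2}$, so it kills odd functions and its image consists of even functions. Likewise $A_k^{\Gamma,\up,-} = A_k^{\Gamma,\up,-}\tfrac{I-R_k}{2}$, so it kills even functions and preserves odd ones. Item 2 is then immediate from $\scrF_k^\Gamma = \scrF_k^{\Gamma,\sym}\oplus^\perp\scrF_k^{\Gamma,\alt}$, since $A_k^{\Gamma,\up}$ acts as $A_k^{\Gamma,\up,+}$ on the first summand and as $A_k^{\Gamma,\up,-}$ on the second.

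For item 3, semidefiniteness is clear: $A_k^{\Gamma,\up,+}$ is a Gram matrix plus the positive semi-definite projector $\Theta_{\rmL,k}^\Gamma$, and $A_k^{\Gamma,\up,-}$ is minus a Gram matrix. For the bound by $1$ in absolute value, I use that $A_k^{\Gamma,\up}$ is similar to the stochastic matrix $P_k^{\Gamma,\up}$, so its spectrum lies in $[-1,1]$. By item 2, each of the two restricted spectra is a subset of $\Sp(A_k^{\Gamma,\up})$. Combining this with the signs gives $[0,1]$ and $[-1,0]$ respectively.

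For item 4, I mimic the computation with $Q^\sym$ and $Q^\alt$ restricted to dimension $k$. Using $(Q_k^\sym)^TQ_k^\sym = I+R_k$, I get $A_k^{\Gamma,\up,+}(Q_k^\sym)^T f = (Q_k^\sym)^T\tfrac{Q_k^\sym A_k^{\Gamma,\up,+}(Q_k^\sym)^T}{2}f$. The remaining task is to verify, entrywise, that $\tfrac12 Q_k^\sym A_k^{\Gamma,\up,+}(Q_k^\sym)^T = A_k^{\ul\Gamma,\up}$ and $\tfrac12 Q_k^\alt A_k^{\Gamma,\up,-}(Q_k^\alt)^T = A_k^{\calO(\ul\Gamma),\up}$. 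The cleanest route is to note that $\tfrac{1}{\sqrt2}\,\delta_k^{\Gamma,\sym}(Q_k^\sym)^T$ corresponds to $\delta_k^{\ul\Gamma}$ after identifying even functions on $X_{k+1}$ with functions on $\ul X_{k+1}$, with the factors $\tfrac12$ accounted for. Similarly, $\delta_k^{\Gamma,\alt}(Q_k^\alt)^T$ corresponds to $\delta_k^{\calO(\ul\Gamma)}$ via odd extension. The Gram forms then match, and $\Theta$ matches $\Pi$. The bookkeeping of the factors $1/2$ from $\delta^{\sym}$ and from the double-counting over $\pm v$ is the step I expect to need care. Once the two identities are in place, the isometric embeddings (up to a factor $\sqrt2$) of $\scrF^{\ul\Gamma}_k$ and $\scrF^{\calO(\ul\Gamma)}_k$ onto the even and odd subspaces give equality of spectra with multiplicities. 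The down case is identical, using $\delta_{k-1}(\delta_{k-1})^T$ and $\Theta_\rmR$.

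For items 5 and 6, I read $\delta^{\Gamma,\pm}$ as $\delta^{\Gamma,\sym}$ and $\delta^{\Gamma,\alt}$. Let $f$ be supported on $X_{k-1}\setminus\rmL(\Gamma)$ with $A_{k-1}^{\Gamma,\up,+}f=\lambda f$ and $\lambda\neq0$. Then $\Theta_{\rmL,k-1}^\Gamma f=0$, so $(\delta^{+}_{k-1})^T\delta^{+}_{k-1}f=\lambda f$. Setting $g=\delta^{+}_{k-1}f$, I get $g\neq0$, since $\lambda\neq0$. Moreover, $\delta^{+}_{k-1}(\delta^{+}_{k-1})^T g = \lambda g$. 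The image of $\delta_{k-1}$ is supported on nodes having a face below, that is, on non-roots, so $\Theta_{\rmR,k}^\Gamma g = 0$ and therefore $A_k^{\Gamma,\down,+}g=\lambda g$. The reverse direction is symmetric: the image of $\delta^T$ lies on non-leaves. Item 6 is the same argument without the $\Theta$ terms, so no support hypotheses are needed there. The quotient and signed versions follow verbatim.
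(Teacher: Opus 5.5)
Your proposal is correct and follows the paper's route. Items 1--4 are obtained as in Theorem \ref{theorem.spectrum_double_cover_quotient_signed_graph} (commutation with $R_k$ and the $Q^\sym$/$Q^\alt$ intertwining), and items 5--6 come from the $M^T M$ versus $M M^T$ intertwining. Your explicit checks that the $\Theta$ terms vanish on the relevant supports, and that the bounds $[0,1]$ and $[-1,0]$ follow from the similarity of $A_k^{\Gamma,\up}$ to the stochastic matrix $P_k^{\Gamma,\up}$, fill in details the paper leaves implicit.
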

\begin{remark}\label{remark.no_condition_on_leaves_roots_already_zero}
    If we compare items 5 and 6, we notice that the condition for a function to be supported on $X_{k-1} \setminus \rmL(\Gamma)$ or $X_k \setminus \rmR(\Gamma)$ does not appear in 6. This is because eigenfunctions relative to non-zero eigenvalues of $A_{k-1}^{\Gamma, \up, -}$ or $A_k^{\Gamma, \down, -}$ already satisfy such conditions.
\end{remark}
\begin{remark}\label{remark.restriction_theorem_functions_up_down}
    The decomposition of $X_k = \amalg_{C_k} C_k$ into cover-up-components in dimension $k$ of $\Gamma$, and that of $\ul{X}_k = \amalg_{\ul{C}_k} \ul{C}_k$ into quotient-up-components in dimension $k$ of $\ul{\Gamma}$, induce orthogonal decompositions of the spaces $\scrF_k^\Gamma, \scrF_k^{\Gamma, \sym}, \scrF_k^{\Gamma, \alt}, \scrF_k^{\ul{\Gamma}}$, and $\scrF_k^{\calO(\ul{\Gamma})}$, into their subspaces of functions supported on $C_k, \ul{C}_k$, and $\calO(\ul{C}_k)$. Items 1 to 4 of Theorem \ref{theorem.spectrum_up_down_double_cover_quotient_signed_graph} remain valid when restricting all operators and function spaces to a cover-up-component $C_k$ of $\Gamma$, its corresponding quotient-up-component $\ul{C}_k$ of $\ul{\Gamma}$, and its image $\calO(\ul{C}_k)$ in $\calO(\ul{\Gamma})$. Items 5 and 6 hold true when restricting the operators to a pair of components: in the case of the cover $\Gamma$, a cover-up-component $C_{k-1}$ that is not an involutory pair of leaves and a cover-down-component $C_k$ that is not an involutory pair of roots, associated with each other through the correspondence prescribed by Lemma \ref{lemma.components_of_Gamma}; in the case of the quotient $\ul{\Gamma}$, their corresponding quotient-up-component $\ul{C}_{k-1}$ and quotient-down-component $\ul{C}_k$; in the case of the signed graph $\calO(\ul{\Gamma})$, the images of $\calO(\ul{C}_{k-1})$ and $\calO(\ul{C}_k)$.
\end{remark}
\begin{proof}
    Items 1 to 4 are proven similarly to Theorem \ref{theorem.spectrum_double_cover_quotient_signed_graph}. Items 5 and 6 follow from this general linear algebra fact. Given a (not necessarily square) matrix $M$, consider the symmetric matrices $M^T M$ and $M M^T$. Then, the non-zero spectrum of $M^T M$ coincides with that of $M M^T$. Moreover, $M$ maps eigenvectors of $M^T M$ relative to a non-zero eigenvalue to eigenvectors of $M M^T$ relative to the same eigenvalue (and vice versa, through the matrix $M^T$).
\end{proof}

While root-to-leaf path random walks across different dimensions are aperiodic on each cover-component, this might not be the case when considering conditional random walks. Let us consider, for instance, the up-walk. From Theorem \ref{theorem.spectrum_up_down_double_cover_quotient_signed_graph}, the negative contribution of the spectrum of the up-walk comes from $A_k^{\Gamma, \up, -}|_{\scrF_k^{\Gamma, \alt}}$, or equivalently $A_k^{\calO(\ul{\Gamma}), \up}$. Given a quotient-up-component $\ul{C}_k$ in dimension $k$, we denote by $\scrF_k^{\ul{C}_k}$ and $\scrF_k^{\calO(\ul{C}_k)}$ the subspaces of $\scrF_k^{\ul{\Gamma}}$ and $\scrF_k^{\calO(\ul{\Gamma})}$ of functions supported on $\ul{C}_k$ and $\calO(\ul{C}_k)$, respectively. We also denote by $A_k^{\ul{C}_k, \up}$ the restriction of $A_k^{\ul{\Gamma}, \up}$ to $\scrF_k^{\ul{C}_k}$, and by $A_k^{\calO(\ul{C}_k), \up}$ the restriction of $A_k^{\calO(\ul{\Gamma}), \up}$ to $\scrF_k^{\calO(\ul{C}_k)}$. We also make use of a similar notation in the down-case.
\begin{theorem}\label{theorem.-1_eigenvalue}
    Assume that $\ul{C}_{k-1}$ is a quotient-up-component in dimension $k-1$ that is not a leaf, and let $\ul{C}_k$ be the corresponding quotient-down-component in dimension $k$ prescribed by Lemma \ref{lemma.components_of_Gamma}, which is not a root. Then, the following statements are equivalent:
    \begin{enumerate}
        \item The up-walk is not aperiodic on $C_{k-1}$,
        \item The down-walk is not aperiodic on $C_k$,
        \item The minimal value $-1$ is an eigenvalue of $A_{k-1}^{\calO(\ul{C}_{k-1}), \up}$,
        \item The minimal value $-1$ is an eigenvalue of $A_k^{\calO(\ul{C}_k), \down}$,
        \item $\ul{C}_{k-1}$ is a coherent-up-component,
        \item $\ul{C}_k$ is a coherent-down-component.
    \end{enumerate}
    In this case, the multiplicity of $-1$ is one for both operators. Furthermore, if $\calO$ is chosen according to Remark \ref{remark.adjustment_orientation}, then the corresponding eigenfunction $g$ of $A_{k-1}^{\calO(\ul{C}_{k-1}), \up}$ is given, up to a constant, by
    \[
    g(t) = (\LP(\ul{t}) \cdot \RP(\ul{t}))^{1/2},
    \]
    and the corresponding eigenfunction $f$ of $A_k^{\calO(\ul{C}_k), \down}$, up to a constant, has the same expression
    \[
    f(u) = (\LP(\ul{u}) \cdot \RP(\ul{u}))^{1/2}.
    \]
\end{theorem}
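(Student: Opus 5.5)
The plan is to prove the chain (3)$\Leftrightarrow$(5) on the up-side directly, transfer it to the down-side through Theorem \ref{theorem.spectrum_up_down_double_cover_quotient_signed_graph} and Lemma \ref{lemma.components_of_Gamma}, and obtain the periodicity statements (1), (2) from the spectral ones. For (3)$\Leftrightarrow$(4), item 6 of Theorem \ref{theorem.spectrum_up_down_double_cover_quotient_signed_graph}, restricted to the paired components $\calO(\ul{C}_{k-1})$, $\calO(\ul{C}_k)$ as in Remark \ref{remark.restriction_theorem_functions_up_down}, says that $\delta_{k-1}^{\calO(\ul{\Gamma})}$ and its transpose are multiplicity-preserving bijections between the eigenspaces for the non-zero eigenvalue $-1$. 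Likewise (5)$\Leftrightarrow$(6) is item 4 of Lemma \ref{lemma.components_of_Gamma}. For (1)$\Leftrightarrow$(3), I use Theorem \ref{theorem.spectrum_up_down_double_cover_quotient_signed_graph}: the spectrum of $A_{k-1}^{\Gamma,\up}$ on $C_{k-1}$ is the union of the $[0,1]$-part from the quotient and the spectrum of $A_{k-1}^{\calO(\ul{C}_{k-1}),\up}$. The walk on $C_{k-1}$ is irreducible by Proposition \ref{proposition.existence_uniqueness_stationary_distributions_up_down}, and it is reversible, so its transition matrix is similar to a symmetric matrix. For such an irreducible reversible chain, periodicity is equivalent to $-1$ being an eigenvalue, since the period is at most $2$ for a real spectrum. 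Hence (1)$\Leftrightarrow$(3), and similarly (2)$\Leftrightarrow$(4).

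The core step is (3)$\Leftrightarrow$(5) via a Rayleigh quotient and the equality case of the triangle inequality. Write $h(t)=(\LP(\ul t)\RP(\ul t))^{1/2}$ for $t\in\calO(\ul{C}_{k-1})$. Since $A^{\calO,\up}_{k-1}=-(\delta^{\calO}_{k-1})^T\delta^{\calO}_{k-1}$, for any $g$ the value $g^TA g$ equals $-\|\delta^{\calO}g\|^2$. Expanding $(\delta^{\calO} g)(v)=\sum_{t\subset v}[v:t]\,H(\ul v)^{1/2}H(\ul t)^{-1/2}g(t)$, I will bound $\|\delta^{\calO} g\|^2$ as follows.
\begin{itemize}
\item Apply Cauchy--Schwarz with weights $\RP(\ul t)/\RP(\ul v)$, which sum to $1$ over $t\subset v$ by the definition of $\RP$.
\item Sum over $v$ using $\sum_{v\supset t}\LP(\ul v)=\LP(\ul t)$, noting that no $t$ in $\ul C_{k-1}$ is a leaf.
\end{itemize}
This yields $\|\delta^{\calO} g\|^2\le\|g\|^2$. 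This recovers the bound $\ge -1$ and identifies the equality conditions: equality holds iff for each $v$ the quantities $[v:t]\,g(t)/h(t)$ are all equal over $t\subset v$.

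If $\ul{C}_{k-1}$ is coherent, I choose $\calO$ as in Remark \ref{remark.adjustment_orientation}, so that all $[v:t]=1$. Then $g=h$ attains equality, so $-1$ is an eigenvalue with the stated eigenfunction. Conversely, suppose $g\neq0$ attains $-1$. Set $c(t)=g(t)/h(t)$; equality forces $[v:t]c(t)=[v:t']c(t')$ whenever $t,t'\subset v$. Hence $|c|$ is constant along up-adjacency, and therefore constant on the up-connected set $\ul{C}_{k-1}$, and nonzero there. Flipping $\calO$ at the nodes where $c<0$ (via Remark \ref{remark.flip_function_calO}) makes $c$ a positive constant. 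The equality relation then becomes $[v:t]=[v:t']$, which is exactly coherence. The same argument shows the eigenspace is one-dimensional, since $|c|$ is determined up to a scalar and its sign pattern is forced by the relations. The down-eigenfunction $f$ follows by applying $\delta^{\calO}_{k-1}$ to $g$: with coherent orientation, $(\delta g)(v)=H(\ul v)^{1/2}\sum_{t\subset v}\RP(\ul t)^{1/2}\cdot\RP(\ul t)^{1/2}/\ldots$ collapses to a multiple of $(\LP(\ul v)\RP(\ul v))^{1/2}$ using the recursion for $\RP$. I expect this to need a small orientation adjustment on $\ul X_k$, again as in Remark \ref{remark.adjustment_orientation}.

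I expect the main obstacle to be bookkeeping the weights so that the Cauchy--Schwarz bound is tight with exactly $h$, and checking carefully the aperiodicity-versus-$-1$ equivalence on the cover.
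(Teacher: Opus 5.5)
Your proposal is correct and follows the paper's proof. It uses the same Rayleigh-quotient bound via Cauchy--Schwarz: your weights $\RP(\ul t)/\RP(\ul v)$ amount to the paper's pairing of $([v:t]\RP(\ul t)^{1/2})_{t}$ with $(\LP(\ul t)^{-1/2}g(t))_{t}$. It also uses the same equality analysis, forcing $g\propto(\LP\cdot\RP)^{1/2}$, coherence and multiplicity one, and the same transfers via item 6 of Theorem \ref{theorem.spectrum_up_down_double_cover_quotient_signed_graph} and item 4 of Lemma \ref{lemma.components_of_Gamma}.

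Beyond the paper, you spell out two points it leaves implicit, and both are correct:
\begin{itemize}
\item the equivalence of non-aperiodicity with the eigenvalue $-1$ (an irreducible reversible chain has real spectrum, so its period is at most $2$);
\item the identity $\delta_{k-1}^{\calO(\ul\Gamma)}g=f$ under the adjusted orientation.
\end{itemize}
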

\begin{remark}\label{remark.ping_pong}
    It is intuitive that, if $\ul{C}_{k-1}$ is a coherent-up-component, then the up-walk on $C_{k-1}$ is not aperiodic. Indeed, once a suitable orientation $\calO$ is chosen as in Definition \ref{definition.k-coherent_up_component}, and supposing that the walker is at a node $u \in \calO(\ul{C}_{k-1})$, the rules of the up-walk imply that, after one step, the walker finds themselves at a node in $C_{k-1} \setminus \calO(\ul{C}_{k-1})$, and only after two steps back at a node in $\calO(\ul{C}_{k-1})$.
\end{remark}
\begin{proof}
    First, notice that 3 is equivalent to 4 thanks to Theorem \ref{theorem.spectrum_up_down_double_cover_quotient_signed_graph} (and 5 is equivalent to 6 thanks to Lemma \ref{lemma.components_of_Gamma}). Therefore, we focus on the up-statements, and prove their equivalence. Once we fix $g \in \scrF_{k-1}^{\calO(\ul{C}_{k-1})}$, thanks to Remark \ref{remark.flip_function_calO}, we can always adjust $\calO$ so that $g(t) \ge 0$ for any $t \in \calO(\ul{C}_{k-1})$.
    In the following, when considering a summand prescribed by a choice of $\ul{u}$ and $\ul{v}$ in $\ul{C}_k$, we denote by $t = \calO(\ul{t})$ and $u = \calO(\ul{u})$ the corresponding nodes in $\calO(\ul{C}_{k-1})$ and $\calO(\ul{C}_k)$, respectively. We make use of the Cauchy-Schwartz inequality in order to obtain the following chain of inequalities for the Rayleigh quotient of $g$ with respect to the operator $A_{k-1}^{\calO(\ul{C}_{k-1}), \up}$, or equivalently $A_{k-1}^{\calO(\ul{\Gamma}), \up}$:
    \begin{align*}
        \calR_{A_{k-1}^{\calO(\ul{\Gamma}), \up}}(g) &= \frac{g^T A_{k-1}^{\calO(\ul{\Gamma}), \up} g}{\|g\|^2} \\
        &= - \frac{g^T \big(\delta_{k-1}^{\calO(\ul{\Gamma})}\big)^T \delta_{k-1}^{\calO(\ul{\Gamma})} g}{\|g\|^2} \\
        &= - \frac{\| \delta_{k-1}^{\calO(\ul{\Gamma})} g \|^2}{\|g\|^2} \\
        &= - \frac{\sum_{\ul{u}} H(\ul{u}) \cdot \big(\sum_{\ul{t} \subset \ul{u}} [u : t] \cdot H(\ul{t})^{-1/2} \cdot g(t)\big)^2}{\sum_{\ul{t}} g(t)^2} \\
        &= - \frac{\sum_{\ul{u}} H(\ul{u}) \cdot \big(\sum_{\ul{t} \subset \ul{u}} [u : t] \cdot \LP(\ul{t})^{-1/2} \cdot \RP(\ul{t})^{1/2} \cdot g(t)\big)^2}{\sum_{\ul{t}} g(t)^2}\\
        &\ge - \frac{\sum_{\ul{u}} H(\ul{u}) \cdot \big(\sum_{\ul{t} \subset \ul{u}} [u : t]^2 \cdot \RP(\ul{t})\big) \cdot \big(\sum_{\ul{t} \subset \ul{u}} \LP(\ul{t})^{-1} \cdot g(t)^2\big)}{\sum_{\ul{t}} g(t)^2} \\
        &= - \frac{\sum_{\ul{u}} H(\ul{u}) \cdot \big(\sum_{\ul{t} \subset \ul{u}} \RP(\ul{t})\big) \cdot \big(\sum_{\ul{t} \subset \ul{u}} \LP(\ul{t})^{-1} \cdot g(t)^2\big)}{\sum_{\ul{u}} g(t)^2} \\
        &=- \frac{\sum_{\ul{u}} H(\ul{u}) \cdot \RP(\ul{u}) \cdot \big(\sum_{\ul{t} \subset \ul{u}} \LP(\ul{t})^{-1} \cdot g(t)^2\big)}{\sum_{\ul{t}} g(t)^2} \\
        &=- \frac{\sum_{\ul{u}} \LP(\ul{u}) \cdot \big(\sum_{\ul{t} \subset \ul{u}} \LP(\ul{t})^{-1} \cdot g(t)^2\big)}{\sum_{\ul{t}} g(t)^2} \\
        &=- \frac{\sum_{\ul{u}} \sum_{\ul{t} \subset \ul{u}} \LP(\ul{u}) \cdot \LP(\ul{t})^{-1} \cdot g(t)^2}{\sum_{\ul{t}} g(t)^2} \\
        &=- \frac{\sum_{\ul{t}} \big( \sum_{\ul{u} \supset \ul{t}} \LP(\ul{u})\big) \cdot \LP(\ul{t})^{-1} \cdot g(t)^2}{\sum_{\ul{t}} g(t)^2} \\
        &=- \frac{\sum_{\ul{t}} \LP(\ul{t}) \cdot \LP(\ul{t})^{-1} \cdot g(t)^2}{\sum_{\ul{t}} g(t)^2} \\
        &= -1.
        \vspace{-0.1cm}
    \end{align*}
    The function $g$ is an eigenfunction relative to the eigenvalue $-1$ if and only if the Cauchy-Schwartz inequality above is saturated. This happens if and only if, for any $u \in \calO(\ul{C}_k)$, the vector $([u : t] \cdot \RP(\ul{t})^{1/2})_{t \subset u}$ is proportional to the vector $(\LP(\ul{t})^{-1/2} \cdot g(t))_{t \subset u}$. If this is the case, by connectivity, since $g$ is not everywhere zero, then $g$ is nowhere zero, and therefore $g(t) > 0$ on $\calO(\ul{C}_{k-1})$, and in particular $g(t) = (\LP(\ul{t}) \cdot \RP(\ul{t}))^{1/2}$ up to a positive constant. This also implies that the signature $[u : t]$, as $t \in \calO(\ul{C}_{k-1})$ varies under $u$, does not depend on $t$. In other words, $\ul{C}_{k-1}$ is a coherent-up-component. Vice versa, if $\ul{C}_{k-1}$ is a coherent-up-component, then $g(t) = (\LP(\ul{t}) \cdot \RP(\ul{t}))^{1/2}$ saturates the inequality above. This also implies that the multiplicity of $-1$ is one.
\end{proof}
In conclusion, given a cover-up-component $C_{k-1}$ in dimension $k-1$ (and a cover-down-component $C_k$ in dimension $k$):
\begin{itemize}
    \item If $\ul{C}_{k-1}$ is a leaf ($\ul{C}_k$ is a root), then the up-walk collapses in one step into the uniform stationary probability distribution on $C_{k-1}$ (the down-walk collapses in the same way on $C_k$, respectively);
    \item Assume that $\ul{C}_{k-1}$ is not a leaf, that $\ul{C}_k$ is not a root, and that they correspond to each other as in Lemma \ref{lemma.components_of_Gamma}. If $\ul{C}_{k-1}$ is a coherent-up-component, or equivalently $\ul{C}_k$ is a coherent-down-component, then the up- and down-walks (respectively) are not aperiodic, and almost every initial probability distribution does not converge to the stationary one;
    \item Suppose instead that $\ul{C}_{k-1}$ is not a coherent-up-component (and that the corresponding $\ul{C}_k$ is not a coherent-down-component). We can then obtain a spectral gap theorem for up- and down-walks. Notice that, as pointed out in Remark \ref{remark.automatic_coherent}, both $\ul{C}_{k-1}$ and $\ul{C}_k$ contain at least two quotient nodes each -- otherwise it would be possible to define a coherent orientation. This guarantees that the ``second maximal eigenvalues'' considered below exist. The result is the following:
\end{itemize}
\begin{theorem}\label{theorem.spectral_gap_theorem_conditional}
    Assume that $\ul{C}_{k-1}$ is a quotient-up-component in dimension $k-1$ that is neither a leaf nor coherent, and let $\ul{C}_k$ be the corresponding quotient-down-component in dimension $k$ prescribed by Lemma \ref{lemma.components_of_Gamma}, which is neither a root nor coherent. Then, $1$ is an eigenvalue of $A_{k-1}^{\ul{C}_{k-1}, \up}$, as well as of $A_k^{\ul{C}_k, \down}$, with multiplicity one, so that the second maximal eigenvalue $\lambda_{\max - 1} (A_{k-1}^{\ul{C}_{k-1}, \up})$, which coincides with $\lambda_{\max - 1} (A_k^{\ul{C}_k, \down})$, is lower than $1$. Moreover, the minimal eigenvalue $\lambda_{\min}(A_{k-1}^{\calO(\ul{C}_{k-1}), \up})$, which coincides with $\lambda_{\min}(A_k^{\calO(\ul{C}_k), \down})$, is greater than $-1$. Finally, the up-walk is irreducible and aperiodic on $C_{k-1}$, as well as the down-walk on $C_k$, and the two random processes share the same convergence rate
    \[
    \begin{array}{cccccc}
        &\max \big(
        & \lambda_{\max - 1} \big(A_{k-1}^{\ul{C}_{k-1}, \up}\big)
        &= &\lambda_{\max - 1} \big(A_k^{\ul{C}_k, \down}\big), & \\
        & & -\lambda_{\min} \big(A_{k-1}^{\calO(\ul{C}_{k-1}), \up}\big)
        &= &-\lambda_{\min} \big(A_k^{\calO(\ul{C}_k), \down}\big) &
        \big).
    \end{array}
    \]
\end{theorem}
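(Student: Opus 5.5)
We will reduce everything to the spectral splitting of Theorem \ref{theorem.spectrum_up_down_double_cover_quotient_signed_graph}, restricted to components as in Remark \ref{remark.restriction_theorem_functions_up_down}. The spectrum of the up-walk operator $A_{k-1}^{\Gamma,\up}$ on $C_{k-1}$ is the disjoint union of $\Sp(A_{k-1}^{\ul{C}_{k-1},\up})\subseteq[0,1]$ and $\Sp(A_{k-1}^{\calO(\ul{C}_{k-1}),\up})\subseteq[-1,0]$. Since $A^{\Gamma,\up}_{k-1}$ is similar to $(P^{\Gamma,\up}_{k-1})^T$ via $(D^\Gamma_{k-1})^{1/2}$, the convergence rate of the up-walk on $C_{k-1}$ is the maximum modulus of its eigenvalues other than the top eigenvalue $1$. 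This rate is exactly the displayed maximum, provided we establish three facts: $1$ is simple for the quotient part; $1$ does not occur in the signed part; and $-1$ does not occur in the signed part. The down case is handled in the same way.

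\textbf{Simplicity of $1$.} The function $(\LP\cdot\RP)^{1/2}$ restricted to $\ul{C}_{k-1}$ is an eigenfunction of $A^{\ul{C}_{k-1},\up}_{k-1}$ for $1$, since it is the symmetrized stationary distribution from Theorem \ref{theorem.explicit_description_stationary_distribution_quotient_up_down}. The eigenvalue $1$ is simple because the quotient up-walk on $\ul{C}_{k-1}$ is irreducible (Proposition \ref{proposition.existence_uniqueness_stationary_distributions_up_down}); Perron--Frobenius then applies to the nonnegative irreducible matrix $A^{\ul{C}_{k-1},\up}_{k-1}$. Alternatively, mirroring the proof of Theorem \ref{theorem.-1_eigenvalue}, one can use Cauchy--Schwarz on $\|\delta^{\ul{\Gamma}}_{k-1} g\|^2$, with saturation forcing $g\propto(\LP\cdot\RP)^{1/2}$ by up-connectivity. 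Since $\ul{C}_{k-1}$ has at least two nodes, $\lambda_{\max-1}$ exists and is $<1$.

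\textbf{Identifying the quantities in dimensions $k-1$ and $k$.} The equality of the second maximal eigenvalues in the two dimensions follows from item 5 of Theorem \ref{theorem.spectrum_up_down_double_cover_quotient_signed_graph}, applied to the quotient on the matched pair of components. Since $\ul{C}_{k-1}$ is not a leaf and $\ul{C}_k$ is not a root, there is no $\Pi_\rmL$ or $\Pi_\rmR$ term, so $A^{\ul{C}_{k-1},\up}_{k-1}=M^TM$ and $A^{\ul{C}_k,\down}_k=MM^T$ with $M=\delta^{\ul{\Gamma}}_{k-1}$ restricted. Their nonzero spectra coincide with multiplicities, so the top eigenvalue $1$ and the next one agree. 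A small point needs care here: if $\lambda_{\max-1}$ vanished on one side, it must vanish on the other. This holds because $0$ is then the only remaining eigenvalue candidate on both sides, and each component has at least two nodes.

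By the same item-6 argument, $\lambda_{\min}$ of the two signed operators agree whenever they are nonzero; if one is $0$, then all nonzero eigenvalues are absent on both sides and both are $0$. Theorem \ref{theorem.-1_eigenvalue}, together with the non-coherence hypothesis, gives $\lambda_{\min}>-1$. Finally, the signed part lies in $[-1,0]$, so $1$ is never an eigenvalue there. Hence the eigenvalue $1$ of $A^{\Gamma,\up}_{k-1}$ on $C_{k-1}$ is simple, which confirms irreducibility.

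\textbf{Aperiodicity and the rate.} We obtain aperiodicity from the absence of $-1$, equivalently from the equivalence $1\Leftrightarrow5$ in Theorem \ref{theorem.-1_eigenvalue}. The standard spectral decomposition of a reversible chain, which is available since Theorem \ref{theorem.explicit_description_stationary_distribution_quotient_up_down} gives reversibility on the cover, then shows that the distance to stationarity decays like the largest modulus among the remaining eigenvalues. The nonnegative ones are bounded by $\lambda_{\max-1}$ of the quotient, and the nonpositive ones by $-\lambda_{\min}$ of the signed operator; this yields the displayed maximum for both walks.

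\textbf{Main obstacle.} The delicate point is the bookkeeping of multiplicities and zero eigenvalues when transferring between dimensions $k-1$ and $k$, that is, ensuring that the ``second maximal'' eigenvalues genuinely correspond. I would handle this by the $M^TM$ versus $MM^T$ argument together with the case split on whether the value is zero.
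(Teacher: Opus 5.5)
Your proposal is correct and follows the paper's route. The paper's proof only cites three ingredients: the spectral splitting of Theorem \ref{theorem.spectrum_up_down_double_cover_quotient_signed_graph} (with items 5--6 matching dimensions $k-1$ and $k$), Theorem \ref{theorem.-1_eigenvalue} to exclude $-1$ under non-coherence, and the spectral gap theorem for Markov chains. You supply the bookkeeping it leaves implicit: simplicity of $1$ via Perron--Frobenius, the $M^TM$ versus $MM^T$ matching including the zero-eigenvalue case, and reversibility on the cover. You should also state why both components have at least two nodes, which follows from non-coherence via Remark \ref{remark.automatic_coherent} and item 4 of Lemma \ref{lemma.components_of_Gamma}.
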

\begin{remark}\label{remark.convergence_rate}
    We clarify what we mean by convergence rate. For the random processes of interest, under the hypotheses of Theorem \ref{theorem.spectral_gap_theorem_conditional}, for almost every initial probability distribution, the total variation norm of the difference between the $t$-th evolutionary state of the process and the stationary distribution is, up to a constant, asymptotic to $\lambda^t$ for some $\lambda \in (0, 1)$. Such $\lambda$ is the convergence rate of the process.
\end{remark}

\begin{proof}
    This follows from Theorem \ref{theorem.spectrum_up_down_double_cover_quotient_signed_graph}, Theorem \ref{theorem.-1_eigenvalue}, and the spectral gap theorem for Markov Chains \cite[Chapter 5.3]{beichelt2002stochastic}.
\end{proof}

To conclude this section, we present a stronger result describing the relation between the spectra of the operators of interest in the coherent case. This also provides an alternative proof of the fact that coherent-up- and -down-components induce eigenvalues $-1$ of signed operators:
\begin{proposition}\label{proposition.-1_eigenvalue_same_spectrum}
    Assume that $\ul{C}_k$ is a coherent-up-component in dimension $k$ that is not a leaf. Then, there is an identification of the eigenfunctions of $A_k^{\ul{C}_k, \up}$ and those of $A_k^{\calO(\ul{C}_k), \up}$. In particular, counting eigenvalues with their multiplicities, the operators have opposite spectra. The converse is true as well. The result holds when replacing ``up'' by ``down''.
\end{proposition}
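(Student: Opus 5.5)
The plan is to exhibit an explicit diagonal orthogonal change of basis between $\scrF_k^{\ul{C}_k}$ and $\scrF_k^{\calO(\ul{C}_k)}$ that conjugates $A_k^{\ul{C}_k, \up}$ into $-A_k^{\calO(\ul{C}_k), \up}$. Suppose first that $\ul{C}_k$ is a coherent-up-component that is not a leaf. Following Remark \ref{remark.adjustment_orientation}, we fix an orientation $\calO$ such that, for every $u \in \calO(\ul{C}_k)$ and every $v \in \calO(\ul{X}_{k+1})$ with $u \subset v$, we have $[v : u] = 1$. With this choice, every product $[v:u]\cdot[v:u']$ appearing in the explicit formula for $A_k^{\calO(\ul{\Gamma}), \up}$, restricted to $u, u' \in \calO(\ul{C}_k)$, equals $1$. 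The corresponding entries of $A_k^{\ul{\Gamma}, \up}$ in the case $\ul{u} \sim^\up_= \ul{u'}$ are the same sums of $H(\ul{v})$ with the same prefactors $H(\ul{u})^{-1/2} H(\ul{u'})^{-1/2}$.

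Since $\ul{C}_k$ contains no leaves (as a maximal up-connected set that is not a single leaf, none of its nodes is a leaf), the diagonal case ``$\ul{u}=\ul{u'}$ is a leaf'' never occurs on $\ul{C}_k$. Hence, entrywise,
\[
A_k^{\calO(\ul{C}_k), \up} = - \, O_k^T A_k^{\ul{C}_k, \up} O_k ,
\]
where $O_k$ is the identification $\ul{u}\mapsto \calO(\ul{u})$, a permutation matrix and hence orthogonal. Consequently $f \mapsto f \circ \ul{\ \cdot\ }$ restricted to $\calO(\ul{C}_k)$ sends eigenfunctions of $A_k^{\ul{C}_k,\up}$ with eigenvalue $\lambda$ to eigenfunctions of $A_k^{\calO(\ul{C}_k),\up}$ with eigenvalue $-\lambda$, and the spectra are opposite with multiplicities. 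For an arbitrary orientation $\calO'$, we compose with the orthogonal sign change of Remark \ref{remark.flip_function_calO}. This gives the identification stated in the proposition.

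For the converse, assume that the spectra are opposite, counting multiplicities. By Theorem \ref{theorem.explicit_description_stationary_distribution_quotient_up_down} (reversibility), the vector $(\LP\cdot\RP)^{1/2}$ on $\ul{C}_k$ is an eigenvector of $A_k^{\ul{C}_k,\up}$ with eigenvalue $1$. Hence $-1$ is an eigenvalue of $A_k^{\calO(\ul{C}_k),\up}$. If $\ul{C}_k$ is not a leaf, it then follows from Theorem \ref{theorem.-1_eigenvalue} (applied with $k$ in place of $k-1$) that $\ul{C}_k$ is coherent. If $\ul{C}_k$ is a leaf, then $A_k^{\ul{C}_k,\up}=(1)$ while $A_k^{\calO(\ul{C}_k),\up}=(0)$, so the spectra are not opposite; this case is therefore excluded, consistent with the hypothesis. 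The down-case is obtained identically, using the down version of Remark \ref{remark.adjustment_orientation} and the down formulas, or alternatively by transporting the statement through the bijection of Lemma \ref{lemma.components_of_Gamma} together with items 5 and 6 of Theorem \ref{theorem.spectrum_up_down_double_cover_quotient_signed_graph}.

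The main obstacle is the bookkeeping on the diagonal and at the boundary of the component. Diagonal entries ($\ul{u}=\ul{u'}$ not a leaf) involve $[v:u]^2=1$, so they agree up to sign regardless of coherence. Off-diagonal entries involve only those $\ul{v}$ lying above both nodes, and these all lie in $\ul{X}_{k+1}$, where the adjusted orientation is defined consistently. Checking that the adjustment of Remark \ref{remark.adjustment_orientation} is possible simultaneously for all $\ul{v}$ above $\ul{C}_k$ is the one point that needs care. Each such $\ul{v}$ has all its $k$-faces inside the single component $\ul{C}_k$, and coherence makes $[v:u]$ independent of $u$ for those faces, so flipping $\calO(\ul{v})$ when necessary resolves every $\ul{v}$ independently.
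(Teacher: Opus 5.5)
Your proposal is correct and follows the paper's own argument: under an orientation witnessing coherence, the defining formulas of $A_k^{\ul{C}_k,\up}$ and $A_k^{\calO(\ul{C}_k),\up}$ differ exactly by a sign, since $\ul{C}_k$ contains no leaves. Your treatment of the converse, via the eigenvalue $1$ of the quotient operator together with Theorem~\ref{theorem.-1_eigenvalue}, fills in what the paper leaves implicit. The ``point that needs care'' in your last paragraph is a non-issue: $[v:u]\cdot[v:u']$ is unchanged when $v$ is replaced by $-v$, so Definition~\ref{definition.k-coherent_up_component} alone forces every such product to equal $1$, without adjusting $\calO$ on $\ul{X}_{k+1}$.
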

\begin{remark}
    This statement has the same flavour as the classic result for bipartite graph: for any eigenvalue of the adjacency matrix of a bipartite graph, its opposite is an eigenvalue as well.
\end{remark}
\begin{proof}
    It is enough to notice that, by comparing their defining expressions, these operators are one the opposite of the other under the coherent hypothesis, by choosing an orientation $\calO$ according to Definition \ref{definition.k-coherent_up_component}.
\end{proof}

\endgroup

\section{Normalized Laplacians and Cheeger inequalities on simplicial complexes}\label{chapter.norm_lapl_cheeger}
\begingroup
\let\section\subsection
\let\subsection\subsubsection
\let\subsubsection\paragraph
In this section, we demonstrate how the normalized coboundary operator and the normalized Laplacian on simplicial complexes naturally emerge as operators associated with root-to-leaf path random walks on simplicial complexes, and prove Cheeger inequalities for the maximal eigenvalues of such normalized Laplacians. We first state some standard facts and show how simplicial complexes are an example of double covers of graded signed graphs (Section \ref{section.facts_notation_simplicial_complexes}), and recall the properties of the standard Hodge Laplacian (Section \ref{section.hodge_laplacian}). Then, we dedicate part of this section to the normalized Laplacian (Section \ref{section.normalised_laplacian}). We also make a note on $(k+1)$-partitions and their relation to topological properties of interest (Section \ref{section.k+1_partition}). Lastly, in Section \ref{section.cheeger_inequalities}, we introduce Cheeger constants and prove Cheeger inequalities for normalized Laplacians on simplicial complexes. In particular, after recalling more classic Cheeger inequalities on graphs (Section \ref{subsection.auxiliary_graphs}), we study the up- (Section \ref{subsection.cheeger_up_case}) and down- (Section \ref{subsection.cheeger_down_case}) cases, merging them into combined Cheeger inequalities (Section \ref{subsection.cheeger_merging_everything}). We conclude our discussion by providing an explicit example to show the effectiveness of such combined Cheeger inequalities (Section \ref{subsection.example_tetrahedron}). \\

For a solid introduction to simplicial complexes, we refer the reader to \cite{spanier1989algebraic}. The review \cite{lim2020hodge} is well-written and effective; although developed for the clique complex of a graph, much of the machinery applies to simplicial complexes. This section draws largely from \cite{atay2020} for Cheeger inequalities on signed graphs and is inspired by \cite{jost2023} for Cheeger inequalities on simplicial complexes. We also cite \cite{trevisan2009max} for their work on the Cheeger inequality for the maximal eigenvalue of the normalized Laplacian on simplicial complexes, along with \cite{steenbergen2014cheeger} and \cite{parzanchevski2016isoperimetric} for non-normalized versions of Cheeger inequalities on simplicial complexes. The normalized Laplacian on simplicial complexes considered in this section was introduced by \cite{horak2013spectra}, and random walks on low-dimensional faces of simplicial complexes were studied in \cite{schaub2020}.


\section{Overview and the associated double cover}\label{section.facts_notation_simplicial_complexes}

\begin{figure}[H]
    \centering
    \includegraphics[scale=0.8]{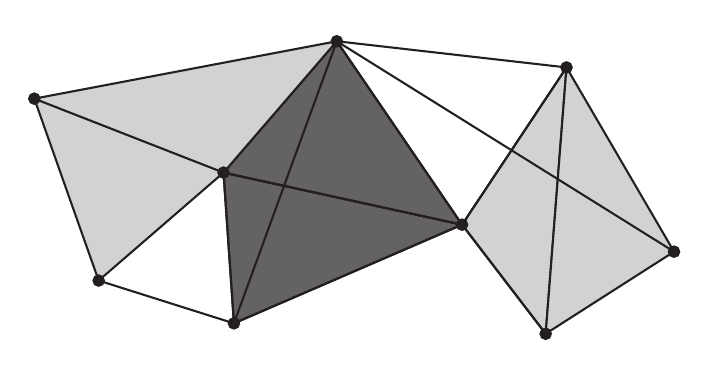}
    \caption{Example of simplicial complex. Notice the use of different scales of gray to differentiate between triangles and tetrahedron.}
    \label{figure.simplicial_complex}
\end{figure}

We introduce simplicial complexes (Figure \ref{figure.simplicial_complex}) and outline how they might be viewed as double covers of graded signed graphs. Typically, a simplicial complex is defined to be its set of faces: given a finite set $\ul{X}_0$ of nodes, or vertices, a set of \textbf{faces} $\ul{X}$ on $\ul{X}_0$ is a set of subsets of $\ul{X}_0$ containing the singletons of $\ul{X}_0$, which we identify with $\ul{X}_0$ itself, and that is closed under sub-inclusion. More explicitly, if $\ul{\varphi}$ is a non-empty subset of $\ul{\varphi'}$ and $\ul{\varphi'} \in \ul{X}$, then $\ul{\varphi} \in \ul{X}$. We choose to assume that $\emptyset \notin \ul{X}$. We will make use of the term simplicial complex (and denote it by $\Gamma$, according to Section \ref{chapter.root_to_leaf}) to refer to $\ul{X}$ together with its accessories, which we describe below. \\

Given $\ul{\varphi} \in \ul{X}$, we define the \textbf{dimension} of $\ul{\varphi}$ to be the cardinality of $\ul{\varphi}$ minus one. Therefore, nodes have dimension $0$, edges have dimension $1$, triangles have dimension $2$, and so on. A \textbf{$k$-face} of $\ul{X}$ is an element of $\ul{X}$ of dimension $k$. We denote by $\ul{X}_k$ the subset of $\ul{X}$ containing the $k$-faces of $\ul{X}$. We also define the \textbf{dimension} of the simplicial complex $\Gamma$ to be the maximal dimension of its faces, and denote it by $\dim(\Gamma)$. Notice that an undirected graph is a simplicial complex of dimension $1$. We denote by $N$ the number of faces of $\ul{X}$, and by $N_k$ the number of $k$-faces in $\ul{X}_k$. \\

A face can be oriented. A $k$-face $\ul{\sigma} = \{\ul{x}_0, \dots, \ul{x}_k\}$ is an unsorted set of $k+1$ nodes. For $k \ge 1$, we say that an \textbf{oriented $k$-face} $\sigma$ is an ordered set $\sigma = [\ul{x}_0, \dots, \ul{x}_k]$, and we refer to $\ul{\sigma}$ as the \textbf{support} of $\sigma$. We consider the orientation to be the same if we rearrange the vertices of $\sigma$ via an even permutation, and we say that the orientation does change if we rearrange them via an odd permutation. We denote by $-\sigma$ the oriented $k$-face supported on the same $\ul{\sigma}$ as $\sigma$, with opposite orientation. An exception is played by nodes $\ul{X}_0$. In this case, for any $\ul{x} \in \ul{X}_0$, we have $x=[\ul{x}]$, and we formally introduce $-x$, another oriented $0$-face supported on $\ul{x}$. We denote the set containing all such $\sigma$, both for $k \ge 1$ and $k = 0$, by $X$, and we see that $X$ contains $2N$ oriented faces. We extend the definition of dimension to $X$, and we denote by $X_k$ the set of oriented $k$-faces, of cardinality $2N_k$. We have a natural quotient map $\ul{ \ . \ } \colon X \to \ul{X}$ that ignores the orientation. On the other hand, choosing an orientation on all faces in $\ul{X}$ corresponds to giving a map $\calO \colon \ul{X} \to X$. \\

Assume that $\sigma, \tau$ are two oriented faces in $X$. We say that $\sigma$ is an \textbf{subface} of $\tau$, and that $\ul{\sigma}$ is a subface of $\ul{\tau}$, if $\ul{\sigma}$ is properly contained in $\ul{\tau}$ as sets. Moreover, we say that $\sigma$ is a \textbf{boundary subface} of $\tau$, and that $\ul{\sigma}$ is a boundary subface of $\ul{\tau}$, if in addition the dimensions of $\tau$ and $\sigma$ differ by $1$. If $\sigma$ is a boundary subface of $\tau$, we write $\sigma \subset \tau$, and $\ul{\sigma} \subset \ul{\tau}$. \\

A fundamental operation on oriented faces of a simplicial complex is taking their boundary. In order to do so, we introduce the modules of \textbf{chains}, on which the boundary operator is defined.
For a given $k$ with $0 \le k \le \dim(\Gamma)$, we denote by $\Z[X_k]$ the $\Z$-module generated by the oriented $k$-faces. Given an orientation, we also denote by $\Z[\calO(\ul{X}_k)]$ the $\Z$-module generated by the oriented $k$-faces in $\calO(\ul{X}_k)$ only. We have a natural map $\Z[X_k] \to \Z[\calO(\ul{X}_k)]$: for every face $\sigma \in \calO(\ul{X}_k)$, the face with opposite orientation $-\sigma$ is identified with $(-1) \cdot \sigma \in \Z[\calO(\ul{X}_k)]$. In other words, this identification makes the multiplication by the scalar $-1$ and taking the opposite orientation equivalent in $\Z[\calO(\ul{X}_k)]$. Even if, technically, the oppositely oriented $-\sigma$ does not belong to $\Z[\calO(\ul{X}_k)]$ when $\sigma$ does, we can identify it with $(-1) \cdot \sigma \in \Z[\calO(\ul{X}_k)]$. Notice that these two elements are not identified in $\Z[X_k]$. \\

With this setup, the \textbf{$k$-th boundary operator} $\partial_k^{\calO(\ul{\Gamma})} \colon \Z[\calO(\ul{X}_k)] \to \Z[\calO(\ul{X}_{k-1})]$ is $\Z$-linear and defined on an oriented $k$-face $\sigma = [\ul{x}_0, \dots, \ul{x}_k]$ as
\begin{equation*}
    \partial_k(\sigma) = \sum_{i=0}^k (-1)^{i+1} \sigma_{-i},
\end{equation*}
where we omit to specify the superscript $\calO(\ul{\Gamma})$. Here, $\sigma_{-i}$ is the oriented $(k-1)$-face $[\ul{x}_0, \dots, \ul{x}_{i-1}, \ul{x}_{i+1}, \dots, \ul{x}_k]$ obtained by removing the $i$-th vertex of $\sigma$. Notice and that the operator $\partial_0$ is the zero operator (nodes have no boundary). To give an example, for an edge $[\ul{x}_0, \ul{x}_1]$, its boundary is given by its ending point minus its starting point, $[\ul{x}_1] - [\ul{x}_0]$. We can then consider a unified boundary operator $\partial \colon \Z[\calO(\ul{X})] \to \Z[\calO(\ul{X})]$ defined as $\partial = \oplus_{k=0}^{\dim(\Gamma)} \partial_k$, with the property that $\partial$ is grading-shifting. A fundamental property of the boundary operator is that a boundary has no boundary, that is, $\partial_{k-1} \partial_k = 0$. Notice that this is true after identifying $-\sigma$ with $(-1) \cdot \sigma$. More concisely, $\partial^2 = 0$. \\

If $\sigma$ is an oriented $k$-face, each of its boundary subfaces $\rho \subset \sigma$, $\rho \in \calO(\ul{X}_{k-1})$, appears, with a positive or negative sign, as a summand in the boundary expression above. The symbol $[\sigma : \rho]$ is defined to be $1$ if the prescribed orientation of $\rho$ is the same as the orientation induced on $\rho$ by taking the boundary of $\sigma$, and $-1$ if they are opposite. More explicitly, if $\rho$ is obtained from $\sigma = [\ul{x}_0, \dots, \ul{x}_k]$ by removing $\ul{x}_i$, then $[\sigma : \rho] = 1$ if the orientation of $(-1)^{i+1} \sigma_{-i}$ and $\rho$ are the same, and $-1$ if they are opposite. We can then rewrite the boundary operator $\partial_k$ as
\[
\partial_k(\sigma) = \sum_{\ul{\rho} \subset \ul{\sigma}} [\sigma : \rho] \cdot \rho.
\]
With all this introduced, we can now see that any simplicial complex $\ul{X}$ produces a double cover $\Gamma = (X, E, [ \ : \ ], -)$ of a strongly graded signed graph, in accordance with Definition \ref{definition.double_covers_graded_signed_graphs} and Definition \ref{definition.strongly_graded}. In particular:
\begin{itemize}
    \item The set of oriented faces $X$ constitutes the set of nodes of the double cover,
    \item The set of edges $E$ is given by the boundary subface condition,
    \item The signature $[ \ : \ ]$ is described through the boundary operator,
    \item The involution $-$ is determined by taking the opposite orientation of an oriented face.
\end{itemize}
We also point out that the notions of involutory quotient (Definition \ref{definition.involutory_quotient}) and orientation (Definition \ref{definition.orientation}) also agree with what discussed so far. \\

It is interesting to describe leaves and roots of a simplicial complex, that is, leaves and roots of the associated double cover, and its leaf- and root-path functions. A leaf has no particular additional characterization: it is a face in $\ul{X}$ that is not properly contained in any other faces. On the other hand, roots are exactly faces of dimension $0$, that is, $\rmR(\ul{\Gamma}) = \ul{X}_0$. The leaf-path function of a simplicial complex, other than Definition \ref{definition.leaf_root_path_function} and Lemma \ref{lemma.number_of_paths_alternative_computation_leaf_root_path_function}, admits a further alternative description:
\begin{lemma}\label{lemma.other_descriprion_leaf_path_function_sc}
The leaf-path function of a simplicial complex can be computed as:
\[
    \LP(\ul{\sigma}) = \sum_{\substack{\ul{\varphi} \in \rmL(\ul{\Gamma}) \\ \ul{\varphi} \text{ contains } \ul{\sigma}}} \big(\dim(\ul{\varphi}) - \dim(\ul{\sigma}) \, \big) !
\]
\end{lemma}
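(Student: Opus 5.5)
By Lemma \ref{lemma.number_of_paths_alternative_computation_leaf_root_path_function}, $\LP(\ul{\sigma})$ equals $|\calP^\uparrow(\ul{\sigma})|$, the number of ascending paths from $\ul{\sigma}$ to some leaf. I will partition this set according to the terminal leaf $\ul{\varphi}$, which gives
\[
\LP(\ul{\sigma}) = \sum_{\ul{\varphi} \in \rmL(\ul{\Gamma})} |\calP^\uparrow(\ul{\sigma}, \ul{\varphi})|.
\]
Only leaves with $\ul{\varphi} \supseteq \ul{\sigma}$ contribute. Indeed, an ascending path consists of boundary-subface inclusions, so its endpoint contains its starting face as a set. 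It therefore remains to show that $|\calP^\uparrow(\ul{\sigma}, \ul{\varphi})| = (\dim(\ul{\varphi}) - \dim(\ul{\sigma}))!$ whenever $\ul{\sigma} \subseteq \ul{\varphi}$.

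For this I will exhibit a bijection between ascending paths from $\ul{\sigma}$ to $\ul{\varphi}$ and total orderings of the set $\ul{\varphi} \setminus \ul{\sigma}$, which has $m = \dim(\ul{\varphi}) - \dim(\ul{\sigma})$ elements. Take an ascending path $\ul{\sigma} = \ul{w}_0 \subset \ul{w}_1 \subset \dots \subset \ul{w}_n = \ul{\varphi}$. The grading is strong, so each step adds exactly one vertex; this forces $n = m$, and the added vertices $\ul{w}_i \setminus \ul{w}_{i-1}$ list $\ul{\varphi} \setminus \ul{\sigma}$ in some order. Conversely, start from an ordering $\ul{x}_1, \dots, \ul{x}_m$ of $\ul{\varphi} \setminus \ul{\sigma}$ and set $\ul{w}_i = \ul{\sigma} \cup \{\ul{x}_1, \dots, \ul{x}_i\}$. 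Each $\ul{w}_i$ is a subset of $\ul{\varphi} \in \ul{X}$, so it lies in $\ul{X}$ by closure under subsets, and each step is a boundary-subface inclusion. The two maps are mutually inverse, so the count is $m!$. When $\ul{\varphi} = \ul{\sigma}$, the length-zero path is allowed by Remark \ref{remark.paths_of_length_zero_allowed} and the count is $0! = 1$, which is consistent.

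There is no real obstacle here. The only points needing care are these: the argument relies on closure of faces under taking subsets, so that every intermediate set is genuinely a face; and $\ul{\sigma}$ itself is included among the leaves containing it exactly when it is a leaf, in which case it is the only contributing leaf, since a leaf is contained in no larger face.

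Alternatively, one could verify the recursion of Definition \ref{definition.leaf_root_path_function} directly, using the identity $\sum_{\ul{\tau} \supset \ul{\sigma}} (d_\varphi - \dim \ul{\tau})! = (d_\varphi - \dim\ul{\sigma})\cdot(d_\varphi - \dim\ul{\sigma} - 1)!$ for each fixed leaf. The path-counting route is cleaner, and it is the one I will use.
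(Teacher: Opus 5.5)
Your proof is correct, but it takes a different route from the paper. The paper uses exactly the alternative you mention at the end: a reversed induction on $\dim(\ul{\sigma})$ straight from the recursion $\LP(\ul{\sigma}) = \sum_{\ul{\tau} \supset \ul{\sigma}} \LP(\ul{\tau})$. It inserts the inductive hypothesis and rewrites $(\dim(\ul{\varphi}) - \dim(\ul{\sigma}) - 1)!$ as $(\dim(\ul{\varphi}) - \dim(\ul{\sigma}))!/(\dim(\ul{\varphi}) - \dim(\ul{\sigma}))$. It then swaps the sums over $\ul{\tau}$ and $\ul{\varphi}$, and uses that a leaf $\ul{\varphi}$ containing $\ul{\sigma}$ contains exactly $\dim(\ul{\varphi}) - \dim(\ul{\sigma})$ boundary supfaces of $\ul{\sigma}$.

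You instead pass through Lemma~\ref{lemma.number_of_paths_alternative_computation_leaf_root_path_function}, and count ascending paths from $\ul{\sigma}$ to $\ul{\varphi}$ as maximal chains in the Boolean interval between them, via orderings of $\ul{\varphi} \setminus \ul{\sigma}$. This gives a finer, leaf-by-leaf identity $|\calP^\uparrow(\ul{\sigma}, \ul{\varphi})| = (\dim(\ul{\varphi}) - \dim(\ul{\sigma}))!$, valid for any faces $\ul{\sigma} \subseteq \ul{\varphi}$, and it makes clear where the factorial comes from. The same bijection run downwards also yields Lemma~\ref{lemma.simple_description_root_path_function_sc}: there are $k!$ descending paths from a $k$-face to each of its $k+1$ vertices.

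The paper's induction is lighter on prerequisites. It needs only the defining recursion and a local count of intermediate faces, not the path interpretation of $\LP$. Both arguments use closure of faces under subsets at the same point, to guarantee that every intermediate set between $\ul{\sigma}$ and $\ul{\varphi}$ is a face.
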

\begin{proof}
    The proof is by reversed induction on $\dim(\ul{\sigma})$. If $\ul{\sigma}$ is a leaf, the result is true. Otherwise, assume that $\ul{\sigma}$ is a $k$-face and not a leaf, and suppose that the result is true for $(k+1)$-faces. Then, using the recursive property defining the leaf-path function, and the inductive hypothesis, we have that
    \begin{align*}
        \LP(\ul{\sigma}) &= \sum_{\ul{\tau} \supset \ul{\sigma}} \LP(\ul{\tau}) \\
        &= \sum_{\ul{\tau} \supset \ul{\sigma}} \sum_{\substack{\ul{\varphi} \in \rmL(\ul{\Gamma}) \\ \ul{\varphi} \text{ contains } \ul{\tau}}} \big(\dim(\ul{\varphi}) - \dim(\ul{\tau}) \, \big) ! \\
        &= \sum_{\ul{\tau} \supset \ul{\sigma}} \sum_{\substack{\ul{\varphi} \in \rmL(\ul{\Gamma}) \\ \ul{\varphi} \text{ contains } \ul{\tau}}} \big(\dim(\ul{\varphi}) - \dim(\ul{\sigma}) - 1 \, \big) ! \\
        &= \sum_{\ul{\tau} \supset \ul{\sigma}} \sum_{\substack{\ul{\varphi} \in \rmL(\ul{\Gamma}) \\ \ul{\varphi} \text{ contains } \ul{\tau}}} \frac{\big(\dim(\ul{\varphi}) - \dim(\ul{\sigma}) \, \big) !}{\dim(\ul{\varphi}) - \dim(\ul{\sigma})} \\
        &= \sum_{\ul{\varphi} \in \rmL(\ul{\Gamma})} \sum_{\substack{\ul{\tau} \supset \ul{\sigma} \\ \ul{\tau} \text{ is contained in } \ul{\varphi}}} \frac{\big(\dim(\ul{\varphi}) - \dim(\ul{\sigma}) \, \big) !}{\dim(\ul{\varphi}) - \dim(\ul{\sigma})} \\
        &= \sum_{\substack{\ul{\varphi} \in \rmL(\ul{\Gamma}) \\ \ul{\varphi} \text{ contains } \ul{\sigma}}} \big(\dim(\ul{\varphi}) - \dim(\ul{\sigma}) \, \big) !
    \end{align*}
    where we used that the number of boundary supfaces $\ul{\tau}$ of $\ul{\sigma}$ contained in $\ul{\varphi}$ is $\dim(\ul{\varphi}) - \dim(\ul{\sigma})$.
\end{proof}

\begin{figure}[H]
    \centering
    \includegraphics[scale=0.8]{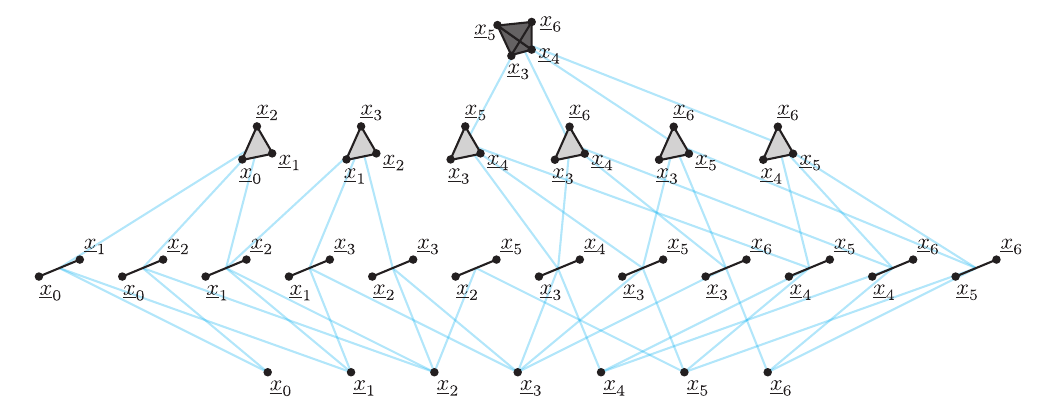}
    \caption{Structure of the quotient $\ul{\Gamma}$ for the simplicial complex whose faces are all non-empty subsets of $\{\ul{x}_0, \ul{x}_1, \ul{x}_2\}, \{\ul{x}_1, \ul{x}_2, \ul{x}_3\}, \{\ul{x}_2, \ul{x}_5\}, \{\ul{x}_3, \ul{x}_4, \ul{x}_5, \ul{x}_6\}$.}
    \label{figure.LP_simplicial_complex}
\end{figure}

\begin{table}[H]
    \centering
    \begin{tabular}{c|c|c||c|c|c}
        {$k$} & {$k$-face $\ul{\sigma}$} & {$\LP(\ul{\sigma})$} & {$k$} & {$k$-face $\ul{\sigma}$} & {$\LP(\ul{\sigma})$} \\
        \hline
        3 & $\{\ul{x}_3, \ul{x}_4, \ul{x}_5, \ul{x}_6\}$ & 1 & 1 & $\{\ul{x}_3, \ul{x}_4\}$ & 2 \\
        2 & $\{\ul{x}_0, \ul{x}_1, \ul{x}_2\}$ & 1 & 1 & $\{\ul{x}_3, \ul{x}_5\}$ & 2 \\
        2 & $\{\ul{x}_1, \ul{x}_2, \ul{x}_3\}$ & 1 & 1 & $\{\ul{x}_3, \ul{x}_6\}$ & 2 \\
        2 & $\{\ul{x}_3, \ul{x}_4, \ul{x}_5\}$ & 1 & 1 & $\{\ul{x}_4, \ul{x}_5\}$ & 2 \\
        2 & $\{\ul{x}_3, \ul{x}_4, \ul{x}_6\}$ & 1 & 1 & $\{\ul{x}_4, \ul{x}_6\}$ & 2 \\
        2 & $\{\ul{x}_3, \ul{x}_5, \ul{x}_6\}$ & 1 & 1 & $\{\ul{x}_5, \ul{x}_6\}$ & 2 \\
        2 & $\{\ul{x}_4, \ul{x}_5, \ul{x}_6\}$ & 1 & 0 & $\ul{x}_0$ & 2 \\
        1 & $\{\ul{x}_0, \ul{x}_1\}$ & 1 & 0 & $\ul{x}_1$ & 4 \\
        1 & $\{\ul{x}_0, \ul{x}_2\}$ & 1 & 0 & $\ul{x}_2$ & 5 \\
        1 & $\{\ul{x}_1, \ul{x}_2\}$ & 2 & 0 & $\ul{x}_3$ & 8 \\
        1 & $\{\ul{x}_1, \ul{x}_3\}$ & 1 & 0 & $\ul{x}_4$ & 6 \\
        1 & $\{\ul{x}_2, \ul{x}_3\}$ & 1 & 0 & $\ul{x}_5$ & 7 \\
        1 & $\{\ul{x}_2, \ul{x}_5\}$ & 1 & 0 & $\ul{x}_6$ & 6 \\
    \end{tabular}
    \caption{Computation of the leaf-path function values $\LP(\ul{\sigma})$ for the simplicial complex in Figure \ref{figure.LP_simplicial_complex}. The values can be obtained via the definition of $\LP$, or Lemma \ref{lemma.number_of_paths_alternative_computation_leaf_root_path_function}, or Lemma \ref{lemma.other_descriprion_leaf_path_function_sc}.}
    \label{table.LP_simplicial_complex}
\end{table}

In Figure \ref{figure.LP_simplicial_complex}, we provide an example of the structure of the quotient $\ul{\Gamma}$ of a simplicial complex, and in Table \ref{table.LP_simplicial_complex} we summarize the values of the leaf path function of said simplicial complex. If we focus the attention on a $k$-face $\ul{\sigma}$, the simplicial structure above $\ul{\sigma}$ might be complicated. However, since every non-empty subset of $\ul{\varphi}$ is a face of the simplicial complex, the simplicial structure is complete below $\ul{\sigma}$. This observation implies that the root-path function takes a simple form in the case of simplicial complexes:

\begin{lemma}\label{lemma.simple_description_root_path_function_sc}
    The root-path function of a simplicial complex takes values $\RP(\ul{\sigma}) = (k+1)!$ on $k$-faces $\ul{\sigma}$.
\end{lemma}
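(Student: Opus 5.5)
We argue by induction on the dimension $k$, using the recursive definition of $\RP$ from Definition \ref{definition.leaf_root_path_function}. An alternative is to count descending paths via Lemma \ref{lemma.number_of_paths_alternative_computation_leaf_root_path_function}. A descending path from a $k$-face to a root removes one vertex at a time until a single vertex remains, so it is an ordering of the $k+1$ vertices of $\ul{\sigma}$, giving $(k+1)!$ paths. We nonetheless follow the inductive route, which mirrors the proof of Lemma \ref{lemma.other_descriprion_leaf_path_function_sc}.

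For the base case $k=0$, recall from the discussion above that the roots are exactly the $0$-faces, $\rmR(\ul{\Gamma}) = \ul{X}_0$. So every $0$-face $\ul{x}$ is a root, and $\RP(\ul{x}) = 1 = 1!$.

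For the inductive step, let $\ul{\sigma}$ be a $k$-face with $k \ge 1$, and assume $\RP(\ul{\rho}) = k!$ for every $(k-1)$-face $\ul{\rho}$. Since $\ul{\sigma}$ has at least two vertices, it is not a root. Because the complex is closed under taking non-empty subsets, its boundary subfaces $\ul{\rho} \subset \ul{\sigma}$ are exactly the $k+1$ subsets obtained by deleting one vertex, and all of them lie in $\ul{X}$. The recursion then gives
\[
\RP(\ul{\sigma}) = \sum_{\ul{\rho} \subset \ul{\sigma}} \RP(\ul{\rho}) = (k+1) \cdot k! = (k+1)!.
\]

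The only point needing care is that the set of boundary subfaces of $\ul{\sigma}$ in $\ul{\Gamma}$ has exactly $k+1$ elements. This follows from closure under sub-inclusion, which is the ``completeness below $\ul{\sigma}$'' remarked just before the statement. It also uses the convention $\emptyset \notin \ul{X}$, which guarantees that $0$-faces have no subfaces and are therefore roots. With these two facts in hand, the argument is short and has no real obstacle.
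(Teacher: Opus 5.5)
Your proposal is correct and follows the same route as the paper: induction on $k$ via the recursive definition of $\RP$, using that $0$-faces are roots and that every $k$-face has exactly $k+1$ boundary subfaces, each carrying $\RP = k!$. Your added checks (closure under sub-inclusion and $\emptyset \notin \ul{X}$) simply make explicit what the paper's one-line proof leaves implicit.
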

\begin{proof}
    The result follows by induction from Definition \ref{definition.leaf_root_path_function} and the fact that every $k$-face has exactly $k + 1$ boundary subfaces.
\end{proof}
\begin{remark}\label{remark.rp_constant_uniform_probability}
    The fact that the root path function is constant on faces of the same dimension implies that, when walking a down-step of the root-to-leaf path random walk on a simplicial complex, a boundary subface is chosen uniformly.
\end{remark}
We also point out an additional property of coherent-up- and -down-components that is specific to simplicial complexes.
\begin{proposition}\label{proposition.no_k+1_faces}
    Given $k$ with $1 \le k \le \dim(\Gamma)$, assume that $\ul{C}_k$ is a coherent-down-component in dimension $k$ of the simplicial complex $\Gamma$. Then, there are no $(k+1)$-faces over $\ul{C}_k$. More precisely, any $\ul{\sigma} \in \ul{C}_k$ is a leaf.
\end{proposition}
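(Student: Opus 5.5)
The plan is to argue by contradiction, combining the identity $\partial_k \partial_{k+1} = 0$ with the sign constraint imposed by coherence. Suppose that some $\ul{\sigma} \in \ul{C}_k$ is not a leaf. Then there is a $(k+1)$-face $\ul{\tau} \supset \ul{\sigma}$. Write $\ul{\sigma}_0, \dots, \ul{\sigma}_{k+1}$ for the $k+2$ boundary subfaces of $\ul{\tau}$; one of them is $\ul{\sigma}$.

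The first step is to show that all the $\ul{\sigma}_i$ lie in $\ul{C}_k$. Two distinct $\ul{\sigma}_i, \ul{\sigma}_j$ are $(k+1)$-element subsets of the $(k+2)$-element set $\ul{\tau}$, so their intersection has exactly $k$ elements. Since $k \ge 1$, this intersection is non-empty, and it is a $(k-1)$-face $\ul{\rho}_{ij}$ of $\Gamma$. Hence $\ul{\sigma}_i \sim^\down \ul{\sigma}_j$ for every pair. The set $\ul{C}_k \cup \{\ul{\sigma}_0, \dots, \ul{\sigma}_{k+1}\}$ is therefore down-connected, and maximality of the quotient-down-component $\ul{C}_k$ (Definition \ref{definition.k-quotient_down_component}) forces every $\ul{\sigma}_i$ to belong to $\ul{C}_k$.

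Next I fix an orientation $\calO$ as in Definition \ref{definition.k-coherent_down_component}, and set $\sigma_i = \calO(\ul{\sigma}_i)$, $\tau = \calO(\ul{\tau})$ and $\rho_{ij} = \calO(\ul{\rho}_{ij})$. Each $(k-1)$-subface $\ul{\rho}_{ij}$ of $\ul{\tau}$ lies below exactly two of the $\ul{\sigma}$'s, namely $\ul{\sigma}_i$ and $\ul{\sigma}_j$. Expanding $\partial_k \partial_{k+1} \tau = 0$ in $\Z[\calO(\ul{X}_{k-1})]$ and reading off the coefficient of $\rho_{ij}$ gives
\[
[\tau : \sigma_i]\,[\sigma_i : \rho_{ij}] + [\tau : \sigma_j]\,[\sigma_j : \rho_{ij}] = 0 .
\]
Coherence gives $[\sigma_i : \rho_{ij}] = [\sigma_j : \rho_{ij}]$, since $\sigma_i, \sigma_j \in \calO(\ul{C}_k)$ both contain $\rho_{ij}$. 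Combining the two, $[\tau : \sigma_i] = -[\tau : \sigma_j]$ for every $i \neq j$.

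The contradiction now comes from counting. Because $k \ge 1$, there are $k+2 \ge 3$ boundary subfaces, so we may take three of them, $\sigma_0, \sigma_1, \sigma_2$. Their signs $\varepsilon_i = [\tau : \sigma_i] \in \{\pm 1\}$ must be pairwise opposite, which is impossible for three values in $\{\pm 1\}$. Hence no such $\ul{\tau}$ exists, and every $\ul{\sigma} \in \ul{C}_k$ is a leaf.

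The main point needing care is the bookkeeping in $\partial\partial = 0$: each $(k-1)$-subface of $\ul{\tau}$ receives exactly two contributions, and the identification of $-\sigma$ with $(-1)\cdot\sigma$ in $\Z[\calO(\ul{X}_{k-1})]$ is what makes the coefficient statement literal. The hypothesis $k \ge 1$ is used twice: for the non-emptiness of $\ul{\rho}_{ij}$ and for the existence of at least three boundary subfaces.
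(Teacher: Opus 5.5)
Your proof is correct and is essentially the paper's argument: expand $\partial_k \partial_{k+1} \tau = 0$, use that each $(k-1)$-subface of $\ul{\tau}$ lies in exactly two $k$-subfaces together with coherence to force $[\tau : \sigma_i] = -[\tau : \sigma_j]$ for all $i \neq j$, and get a contradiction from $k+2 \ge 3$. The only difference is bookkeeping. The paper passes to the associated coherent-up-component $\ul{C}_{k-1}$ and normalizes $[\sigma : \rho] = 1$ via Remark \ref{remark.adjustment_orientation}. You instead show directly that the boundary subfaces of $\ul{\tau}$ are pairwise down-adjacent, hence lie in $\ul{C}_k$, and then apply the coherence identity $[\sigma_i : \rho_{ij}] = [\sigma_j : \rho_{ij}]$ without renormalizing. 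This is slightly cleaner than the paper's claim that any two $(k-1)$-faces of $\ul{\tau}$ span a $k$-face, which fails for disjoint ones when $k \ge 2$.
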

\begin{proof}
    Let $\ul{C}_{k-1}$ be the coherent-up-component in dimension $k-1$ provided by Lemma \ref{lemma.components_of_Gamma}. Assume by contradiction that there exists a $(k+1)$-face $\ul{\tau}$ containing some $k$-face of $\ul{C}_k$. Notice that this implies that all $(k-1)$-faces $\ul{\rho}$ contained in $\ul{\tau}$ are contained in $\ul{C}_{k-1}$. This is because $\ul{C}_{k-1}$ is up-connected, and for any two different $\ul{\rho}, \ul{\rho'}$ both contained in $\ul{\tau}$, $\ul{\rho} \cup \ul{\rho'}$ is a $k$-face (a boundary subface of $\ul{\tau}$). Now, fixing an orientation $\calO$ as in Remark \ref{remark.adjustment_orientation} and only considering oriented faces $\rho, \sigma, \tau$ belonging to $\calO(\ul{X})$, taking twice the boundary of $\tau$ yields
    \begin{align*}
        0 &= \partial_k \partial_{k+1} (\tau) \\
        &= \partial_k \left( \sum_{\ul{\sigma} \subset \ul{\tau}} [\tau : \sigma] \cdot \sigma \right) \\
        &= \sum_{\ul{\rho} \subset \ul{\sigma}} \sum_{\ul{\sigma} \subset \ul{\tau}} [\tau : \sigma] \cdot [\sigma : \rho] \cdot \rho \\
        &= \sum_{\ul{\rho} \text{ contained in } \ul{\tau}} \left( \sum_{\substack{\ul{\sigma} \\ \ul{\rho} \subset \ul{\sigma} \subset \ul{\tau}}} [\tau : \sigma] \right) \cdot \rho.
    \end{align*}
    It follows that, for all $\ul{\rho}$ contained in $\ul{\tau}$,
    \[
    \sum_{\substack{\ul{\sigma} \\ \ul{\rho} \subset \ul{\sigma} \subset \ul{\tau}}} [\tau : \sigma] = 0.
    \]
    Given $\ul{\rho}$ contained in $\ul{\tau}$, there are exactly two $k$-faces $\ul{\sigma}$ such that $\ul{\rho} \subset \ul{\sigma} \subset \ul{\tau}$. Moreover, each pair of $k$-faces $\ul{\sigma} \neq \ul{\sigma'}$, with $\ul{\sigma}, \ul{\sigma'} \subset \ul{\tau}$, identifies a $k$-face $\ul{\rho}$ contained in $\ul{\tau}$ by taking the intersection $\ul{\rho} = \ul{\sigma} \cap \ul{\sigma'}$. This implies that, for any $\ul{\sigma}, \ul{\sigma'} \subset \ul{\tau}$ with $\ul{\sigma} \neq \ul{\sigma'}$, $[\tau : \sigma] = - [\tau : \sigma']$. As the number of $k$-faces $\ul{\sigma} \subset \ul{\tau}$ is $k+2 \ge 3$, this contradicts the choice of orientation $\calO$ specified by Remark \ref{remark.adjustment_orientation}.
\end{proof}

\section{The combinatorial Hodge Laplacian}\label{section.hodge_laplacian}
We present the construction of Hodge Laplacians, whose normalization will naturally arise from the specialization of root-to-leaf path random walks from Section \ref{chapter.root_to_leaf} to simplicial complexes. Having built the bridge between simplicial complexes and double covers, we can import the description of functions on $\Gamma$, whose space splits as
\[
\scrF_k^{\Gamma} = \scrF_k^{\Gamma, \sym} \oplus^\perp \scrF_k^{\Gamma, \alt},
\]
and
\[
\scrF_k^{\Gamma, \sym} \simeq \scrF_k^{\ul{\Gamma}}, \quad \scrF_k^{\Gamma, \alt} \simeq \scrF_k^{\calO(\ul{\Gamma})}.
\]
Functions in $\scrF_k^{\calO(\ul{\Gamma})}$, or equivalently alternating functions, are more natural to consider from a geometric perspective, as they represent the discrete equivalent of differential $k$-forms on differentiable manifolds. There is more to say on this parallelism, and specifically on Laplacians. \\

A function in $\scrF_k^{\calO(\ul{\Gamma})}$ is, equivalently, a $\Z$-linear function $f \colon \Z[\calO(\ul{X}_k)] \to \R$, as discussed above. We can consider the coboundary operator $\partial_k^* \colon \scrF_k^{\calO(\ul{\Gamma})} \to \scrF_{k+1}^{\calO(\ul{\Gamma})}$: given a function $f \colon \Z[\calO(\ul{X}_k)] \to \R$, its \textbf{coboundary} $\partial_k^*(f) \colon \Z[\calO(\ul{X}_{k+1})] \to \R$, or \textbf{differential}, is defined as $\partial_k^*(f) = f \circ \partial_{k+1}$. More explicitly, once an orientation $\calO$ is prescribed, we have
\begin{equation*}
    (\partial_k^* f)(\tau) = \sum_{\ul{\sigma} \subset \ul{\tau}} [\tau : \sigma] \cdot f(\sigma).
\end{equation*}
As usual, $\sigma = \calO(\ul{\sigma})$ and $\tau = \calO(\ul{\tau})$. For instance, $\partial^*_0$ is the gradient of a function, that assigns to a function $f$ on nodes the discrete vector field $\partial^*_0 f$ on edges that takes values $(\partial^*_0 f)([\ul{x}_0, \ul{x}_1]) = f([\ul{x}_1]) - f([\ul{x}_0])$. Notice that, in matrix form, $\partial_k^* = \partial_{k+1}^T$. We can concisely rewrite the operators $\partial_k^* \colon \scrF_k^{\calO(\ul{\Gamma})} \to \scrF_{k+1}^{\calO(\ul{\Gamma})}$ into $\partial^* \colon \scrF^{\calO(\ul{\Gamma})} \to \scrF^{\calO(\ul{\Gamma})}$, where $\partial^* = \oplus_{k=0}^{\dim(\Gamma)} \partial_k^*$ is a grading-shifting operator. Notice that $\partial^* _{\dim(\Gamma)} = 0$. The dual property $\partial_{k+1}^*  \partial_k^* = 0$ can be written as $(\partial^*)^2 = 0$. \\

The \textbf{combinatorial Laplacian} $L$ (we omit the superscript $\calO(\ul{\Gamma})$) of a simplicial complex (as well as its differential analogous on a manifold) is defined as
\begin{equation*}
    L = \left(\partial^* + (\partial^*)^T \right)^2.
\end{equation*}
The transpose matrix form $(\partial^*)^T$ corresponds to the adjoint of $\partial^*$ with respect to standard scalar product on $\scrF^{\calO(\ul{\Gamma})}$. Since $(\partial^*)^2 = 0$, the combinatorial Laplacian is grading-preserving, and can be rewritten as $L = \oplus_{k=0}^{\dim(\Gamma)} L_k$, where $L_k \colon \scrF_k^{\calO(\ul{\Gamma})} \to \scrF_k^{\calO(\ul{\Gamma})}$ is given by
\begin{equation*}
    L_k = \underbrace{(\partial_k^*)^T  \partial_k^*}_{L_k^\up} + \underbrace{\partial_{k-1}^* (\partial_{k-1}^*)^T}_{L_k^\down}.
\end{equation*}
Notice that $L_0^\down = 0$, as well as $L_{\dim(\Gamma)}^\up = 0$. This expression for the combinatorial Laplacian is particularly convenient to identify some of its properties and the meaningful Hodge decomposition of the space of $k$-alternating functions:
\begin{enumerate}
    \item[H1.] $L_k^\up$ and $L_k^\down$ are self-adjoint (or, in matrix form, symmetric) and positive semi-definite.
    \item[H2.] $L_k^\up  L_k^\down = L_k^\down  L_k^\up = 0$. In particular, $L_k^\up$ and $L_k^\down$ are simultaneously diagonalizable, and we have a \textbf{Hodge decomposition}
    \vspace{-0.2cm}
    \begin{table}[H]
        \centering
        \hspace{0.7cm}
        \begin{tabular}{ccccccc}
        $\scrF_k^{\calO(\ul{\Gamma})}$ & $=$ & $\im(L_k^\up)$ & $\oplus^\perp$ & $\im(L_k^\down)$ & $\oplus^\perp$ & $\left( \ker(L_k^\up) \cap \ker(L_k^\down) \right)$ \\[5pt]
        & $=$ & $\im\big((\partial_k^*)^T\big)$ & $\oplus^\perp$ & $\im(\partial_{k-1}^*)$ & $\oplus^\perp$ & $\left( \ker(\partial_k^*) \cap \ker\big((\partial_{k-1}^*)^T\big) \right)$.
        \end{tabular}
    \end{table}
    \vspace{-0.5cm}
    The direct sum is an orthogonal direct sum with respect to the standard inner product on $k$-alternating functions.
    \item[H3.] The non-zero spectra of $L_{k-1}^\up$ and $L_k^\down$, with eigenvalues counted with multiplicities, coincide. Moreover, the coboundary operator $\partial_{k-1}^*$ maps eigenfunctions of $L_{k-1}^\up$ relative to a non-zero eigenvalue to eigenfunctions of $L_k^\down$ of the same eigenvalue (or, equivalently, the adjoint coboundary operator $(\partial_{k-1}^*)^T$ maps eigenfunctions of $L_k^\down$ relative to a non-zero eigenvalue to eigenfunctions of $L_{k-1}^\up$ of the same eigenvalue).
\end{enumerate}
The last term of the Hodge decomposition plays a central role in algebraic topology: it is the space of \textbf{harmonic $k$-functions}
\[
\ker(L_k^\up) \cap \ker(L_k^\down) = \ker(\partial_k^*) \cap \ker\big((\partial_{k-1}^*)^T\big).
\]
Noticing that $\ker((\partial_{k-1}^*)^T) = \im(\partial_{k-1}^*)^\perp$, the natural isomorphism
\begin{equation*}
    \ker(\partial_k^*) \cap \im(\partial_{k-1}^*)^\perp \simeq \frac{\ker(\partial_k^*)}{\im(\partial_{k-1}^*)} = H^k(\Gamma, \R)
\end{equation*}
identifies the space of harmonic $k$-functions with the \textbf{$k$-th cohomology} of the simplicial complex with real coefficients. Again, this fact -- much simpler to verify in this discrete framework -- has its analogous version in the differential context of manifolds. \\

We compute explicitly the action of the up- and down-Laplacians on $k$-alternating functions. Let $f$ be in $\scrF_k^{\calO(\ul{\Gamma})}$. For the up-Laplacian, we define the \textbf{degree} of $\ul{\sigma}$ to be the number of $(k+1)$-supfaces of $\ul{\sigma}$. We denote it by $\deg(\sigma) = \deg(\ul{\sigma})$. Then,
\begin{align*}
    \big(L_k^\up f\big)(\sigma) &= \sum_{\ul{\sigma'}} (L_k^\up)_{\sigma \sigma'} \cdot f(\sigma') \\
    &= \sum_{\ul{\sigma'}} \sum_{\ul{\tau} \supset \ul{\sigma}, \ul{\sigma'}} [\tau : \sigma] \cdot [\tau : \sigma'] \cdot f(\sigma') \\
    &= \deg(\sigma) \cdot f(\sigma) + \sum_{\ul{\sigma'} \sim^\up \ul{\sigma}} s^\up(\sigma, \sigma') \cdot f(\sigma').
\end{align*}
The signature function $s^\up(\sigma, \sigma')$ takes values $\pm 1$. When $\ul{\sigma} \neq \ul{\sigma'}$, if there exists a $\ul{\tau}$ containing both $\ul{\sigma}$ and $\ul{\sigma'}$, then this $\ul{\tau}$ is unique -- and it is $\ul{\tau} = \ul{\sigma} \cup \ul{\sigma'}$. Then, $s^\up(\sigma, \sigma')$ is defined as
\begin{equation*}
    s^\up(\sigma, \sigma') = [\sigma \cup \sigma' : \sigma] \cdot [\sigma \cup \sigma' : \sigma'],
\end{equation*}
where $\sigma \cup \sigma' = \calO(\ul{\sigma} \cup \ul{\sigma'})$. Notice that $s^\up$ only depends on an orientation on $k$-faces, and not on $(k+1)$-faces (or on faces of other dimensions). \\

For the down-Laplacian, we have already noticed that $L_k^\down = 0$ when $k=0$. When $k > 0$, the number of $(k-1)$-subfaces of $\ul{\sigma}$ is $k+1$. Then,
\begin{align*}
    \big(L_k^\down f\big)(\sigma) &= \sum_{\ul{\sigma'}} (L_k^\down)_{\sigma \sigma'} \cdot f(\sigma') \\
    &= \sum_{\ul{\sigma'}} \sum_{\ul{\rho} \subset \ul{\sigma}, \ul{\sigma'}} [\sigma : \rho] \cdot [\sigma' : \rho] \cdot f(\sigma') \\
    &= (k+1) \cdot f(\sigma) + \sum_{\ul{\sigma'} \sim^\down \ul{\sigma}} s^\down(\sigma, \sigma') \cdot f(\sigma').
\end{align*}
Here $s^\down(\sigma, \sigma')$ is defined analogously to the up-case: when $\ul{\sigma} \neq \ul{\sigma'}$, if there exists a $\ul{\rho}$ contained in both $\ul{\sigma}$ and $\ul{\sigma'}$, then this $\ul{\rho}$ is unique -- and it is $\ul{\rho} = \ul{\sigma} \cap \ul{\sigma'}$. Then, $s^\down(\sigma, \sigma')$ is defined as
\begin{equation*}
    s^\down(\sigma, \sigma') = [\sigma : \sigma \cap \sigma'] \cdot [\sigma' : \sigma \cap \sigma'],
\end{equation*}
where $\sigma \cap \sigma' = \calO(\ul{\sigma} \cap \ul{\sigma'})$. Notice that $s^\down$ only depends on an orientation on $k$-faces, and not on $(k-1)$-faces (or on faces of other dimensions).

\section{The normalized Laplacian}\label{section.normalised_laplacian}
A \textbf{normalization} of the Laplacian $L$ of a simplicial complex $\Gamma$ is a choice of diagonal matrices $W_k$, for each $k$ with $0 \le k \le \dim(\Gamma)$, of size $N_k \times N_k$ and positive diagonal entries, that fit into the following desired framework. We think of $W_k$ as indexed by $\sigma \in \calO(\ul{X}_k)$. We define the \textbf{normalized coboundary operator} $\delta_k$ as
\begin{equation}\label{equation.normalised_co-boundary}
    \delta_k = W_{k+1}^{1/2} \partial_k^* W_k^{-1/2}
\end{equation}
and the \textbf{normalized Laplacian}, which splits into its up- and down-components, as
\[
    \Delta_k = \underbrace{\delta_k^T  \delta_k}_{\Delta_k^\up} + \underbrace{\delta_{k-1} \delta_{k-1}^T}_{\Delta_k^\down},
\]
with
\[
    \Delta_k^\up = \delta_k^T  \delta_k = W_k^{-1/2} (\partial_k^*)^T W_{k+1} \partial_k^* W_k^{-1/2}
\]
and
\[
    \Delta_k^\down = \delta_{k-1} \delta_{k-1}^T = W_k^{1/2} \partial_{k-1}^* W_{k-1}^{-1} (\partial_{k-1}^*)^T W_k^{1/2}.
\]
With these constructions, the normalized equivalent of the properties satisfied by the Hodge Laplacian are automatically guaranteed:
\begin{enumerate}
    \item[NH1.] $\Delta_k^\up$ and $\Delta_k^\down$ are self-adjoint (or, in matrix form, symmetric) and positive semi-definite.
    \item[NH2.] $\Delta_k^\up  \Delta_k^\down = \Delta_k^\down  \Delta_k^\up = 0$. In particular, $\Delta_k^\up$ and $\Delta_k^\down$ are simultaneously diagonalizable, and we have a \textbf{normalized Hodge decomposition}
    \vspace{-0.2cm}
    \begin{table}[H]
        \hspace{0.7cm}
        \centering
        \begin{tabular}{ccccccc}
        $\scrF_k^{\calO(\ul{\Gamma})}$ & $=$ & $\im(\Delta_k^\up)$ & $\oplus^\perp$ & $\im(\Delta_k^\down)$ & $\oplus^\perp$ & $\left( \ker(\Delta_k^\up) \cap \ker(\Delta_k^\down) \right)$ \\[5pt]
        & $=$ & $\im\big(\delta_k^T\big)$ & $\oplus^\perp$ & $\im(\delta_{k-1})$ & $\oplus^\perp$ & $\left( \ker(\delta_k) \cap \ker\big(\delta_{k-1}^T\big) \right)$.
        \end{tabular}
    \end{table}
    \vspace{-0.5cm}
    The direct sum is an orthogonal direct sum with respect to the standard inner product on $k$-alternating functions.
    \item[NH3.] The non-zero spectra of $\Delta_{k-1}^\up$ and $\Delta_k^\down$, with eigenvalues counted with multiplicities, coincide. Moreover, the coboundary operator $\delta_{k-1}$ maps eigenfunctions of $\Delta_{k-1}^\up$ relative to a non-zero eigenvalue to eigenfunctions of $\Delta_k^\down$ of the same eigenvalue (or, equivalently, the adjoint coboundary operator $\delta_{k-1}^T$ maps eigenfunctions of $\Delta_k^\down$ relative to a non-zero eigenvalue to eigenfunctions of $\Delta_{k-1}^\up$ of the same eigenvalue).
\end{enumerate}
In addition, we also have that:
\begin{enumerate}
    \item[NH4.] The ranks of $L_k^\up$ and $\Delta_k^\up$ are the same, and the same holds for $L_k^\down$ and $\Delta_k^\down$. From the Hodge decomposition, and its normalized counterpart, it follows that the space of \textbf{normalized harmonic $k$-functions}
    \begin{equation*}
        \ker(\Delta_k^\up) \cap \ker(\Delta_k^\down) = \ker(\delta_k) \cap \ker\big(\delta_{k-1}^T\big)
    \end{equation*}
    has the same dimension of the space of (standard) harmonic $k$-functions, that is the same as the dimension of the $k$-th cohomology of $\Gamma$.
\end{enumerate}
A normalization procedure on simplicial complexes is relevant if it yields properties that, in some sense, generalise those achieved through the normalization procedure for the graph Laplacian. In this regard, we seek a normalization satisfying these further properties:
\begin{enumerate}
    \item[NH5$^0$.] The spectra of $\Delta_k^\up$ and $\Delta_k^\down$ are uniformly bounded from above. Here ``uniformly'' means ``by the same value across all simplicial complexes''.
    \item[NH6$^0$.] The spectral gaps between the maximal eigenvalues of $\Delta_k^\up$ and $\Delta_k^\down$ and such upper bound values are saturated if and only if the simplicial complex admits meaningful topological/combinatorial substructures.
\end{enumerate}
It is natural to seek a process across different dimensions to obtain a normalization of the coboundary operator that is compatible across dimensions, as in Equation \ref{equation.normalised_co-boundary}. From this perspective, root-to-leaf path random walks reveal a normalization procedure that satisfies the above:
\begin{theorem}\label{theorem.normalization}
    Given $k$ with $0 \le k \le \dim(\Gamma)$, we define $W_k$ to be the $N_k \times N_k$ diagonal matrix whose $(\sigma, \sigma)$-entry is the value $H(\ul{\sigma}) = \LP(\ul{\sigma}) / \RP(\ul{\sigma}) = \LP(\ul{\sigma}) / (k+1)!$. This yields a normalization of the Hodge Laplacian of a simplicial complex $\Gamma$. In particular:
    \begin{itemize}
        \item The normalized coboundary operator $\delta_k = W_{k+1}^{1/2} \partial_k^* W_k^{-1/2}$ coincides with the operator $\delta_k^{\calO(\ul{\Gamma})}$ associated with the root-to-leaf path random walk on $\Gamma$,
        \item The normalized up- and down-Laplacians can be expressed in terms of operators associated with the conditional random up- and down-walks, respectively, as
        \[
        \Delta_k^\up = - A_k^{\calO(\ul{\Gamma}), \up}, \qquad \text{and} \qquad \Delta_k^\down = - A_k^{\calO(\ul{\Gamma}), \down}.
        \]
        \item The following properties are satisfied:
        \begin{enumerate}
            \item[NH5.] The spectra of $\Delta_k^\up$ and $\Delta_k^\down$ are uniformly bounded from above by the value $1$.
            \item[NH6.] Suppose that $1 \le k \le \dim(\Gamma)$. Then, the spectral gaps $1 - \lambda_{\max} (\Delta_{k-1}^\up)$ and $1 - \lambda_{\max} (\Delta_k^\down)$ coincide, and they are both saturated if and only if the simplicial complex admits a coherent-up-component in dimension $k-1$, or equivalently a coherent-down-component in dimension $k$. Moreover, the multiplicities of the value $1$ as an eigenvalue of $\Delta_{k-1}^\up$ and of $\Delta_k^\down$ are the same, and they are equal to the number of such components.
        \end{enumerate}
    \end{itemize}
\end{theorem}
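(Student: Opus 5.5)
The plan is to check the identities entry by entry and then import the spectral results of Section \ref{chapter.root_to_leaf}. The simplicial complex has been shown to be a double cover of a strongly graded signed graph, so all operators and theorems of Section \ref{section.conditional_random_walks} apply. By Lemma \ref{lemma.simple_description_root_path_function_sc}, $\RP(\ul{\sigma}) = (k+1)!$, hence $H(\ul{\sigma}) = \LP(\ul{\sigma})/(k+1)! > 0$, and $W_k$ is a legitimate positive diagonal normalization. Properties NH1--NH4 then hold automatically from the general construction, as remarked before the statement.

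\textbf{Identification of the coboundary.} Fix $\tau \in \calO(\ul{X}_{k+1})$ and $\sigma \in \calO(\ul{X}_k)$. In matrix form $\partial_k^* = \partial_{k+1}^T$, so $(\partial_k^*)_{\tau\sigma} = [\tau:\sigma]$ when $\ul{\sigma} \subset \ul{\tau}$, and $0$ otherwise. Therefore
\[
(W_{k+1}^{1/2} \partial_k^* W_k^{-1/2})_{\tau\sigma} = [\tau:\sigma] \cdot H(\ul{\tau})^{1/2} \cdot H(\ul{\sigma})^{-1/2},
\]
which is exactly the definition of $(\delta_k^{\calO(\ul{\Gamma})})_{\tau\sigma}$. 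This proves the first bullet.

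\textbf{The Laplacian identities.} In the previous section we showed that
\[
A_k^{\calO(\ul{\Gamma}),\up} = -(\delta_k^{\calO(\ul{\Gamma})})^T\delta_k^{\calO(\ul{\Gamma})}, \qquad A_k^{\calO(\ul{\Gamma}),\down} = -\delta_{k-1}^{\calO(\ul{\Gamma})}(\delta_{k-1}^{\calO(\ul{\Gamma})})^T.
\]
Substituting $\delta_k = \delta_k^{\calO(\ul{\Gamma})}$ gives $\Delta_k^\up = -A_k^{\calO(\ul{\Gamma}),\up}$ and $\Delta_k^\down = -A_k^{\calO(\ul{\Gamma}),\down}$ immediately.

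\textbf{NH5.} Theorem \ref{theorem.spectrum_up_down_double_cover_quotient_signed_graph}, item 4, identifies $\Sp(A_k^{\calO(\ul{\Gamma}),\up})$ with $\Sp(A_k^{\Gamma,\up,-}|_{\scrF_k^{\Gamma,\alt}})$. Item 3 places the latter in $[-1,0]$, and likewise in the down-case. Negating gives $\Sp(\Delta_k^\up), \Sp(\Delta_k^\down) \subseteq [0,1]$.

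\textbf{NH6.} The main care goes here. By NH3, the non-zero spectra of $\Delta_{k-1}^\up$ and $\Delta_k^\down$ coincide with multiplicities, so if either operator has a non-zero eigenvalue, their maximal eigenvalues and the multiplicities of $1$ agree. If both operators vanish, their maxima are both $0$. I would handle that trivial case separately: it occurs exactly when $\ul{X}_k = \emptyset$, and then there are no coherent components.

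For the saturation and multiplicity claims, I will decompose into components. By Remark \ref{remark.restriction_theorem_functions_up_down}, $A_{k-1}^{\calO(\ul{\Gamma}),\up}$ is the orthogonal direct sum of its restrictions $A_{k-1}^{\calO(\ul{C}_{k-1}),\up}$ over quotient-up-components $\ul{C}_{k-1}$. So the multiplicity of $-1$ is the sum over components.
\begin{itemize}
\item If $\ul{C}_{k-1}$ is a leaf, then $\delta_{k-1}$ vanishes on it and the restriction is $0$, contributing no eigenvalue $-1$.
\item If $\ul{C}_{k-1}$ is not a leaf, Theorem \ref{theorem.-1_eigenvalue} says $-1$ is an eigenvalue of the restriction if and only if $\ul{C}_{k-1}$ is coherent-up, and then with multiplicity exactly one.
\end{itemize}
Hence the multiplicity of $1$ in $\Delta_{k-1}^\up$ equals the number of coherent-up-components in dimension $k-1$. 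The gap $1-\lambda_{\max}$ is zero precisely when at least one such component exists. Lemma \ref{lemma.components_of_Gamma}, item 4, transfers the count bijectively to coherent-down-components in dimension $k$. Running the same argument directly on $\Delta_k^\down$, with roots in place of leaves, gives a consistency check.

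The main obstacle is purely bookkeeping: making sure that leaf components (respectively root components), which Definition \ref{definition.k-coherent_up_component} excludes from being coherent, contribute no eigenvalue $1$. This is why the explicit zero-operator observation on them is needed.
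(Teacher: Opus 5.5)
Your proposal is correct and follows the same route as the paper. The paper's one-line proof defers to Section \ref{chapter.root_to_leaf} and Theorem \ref{theorem.-1_eigenvalue}, and your ingredients are exactly what that proof relies on: the entrywise identification $\delta_k = \delta_k^{\calO(\ul{\Gamma})}$, the factorizations $A_k^{\calO(\ul{\Gamma}),\up} = -(\delta_k^{\calO(\ul{\Gamma})})^T\delta_k^{\calO(\ul{\Gamma})}$ and their down analogue, items 3--4 of Theorem \ref{theorem.spectrum_up_down_double_cover_quotient_signed_graph} for NH5, and the component-wise count via Theorem \ref{theorem.-1_eigenvalue} for NH6. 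Your explicit handling of leaf components and of the multiplicity as a sum over quotient-up-components fills in bookkeeping the paper leaves implicit. Your ``trivial case'' is vacuous, since $k \le \dim(\Gamma)$ forces $\ul{X}_k \neq \emptyset$ and hence $\delta_{k-1} \neq 0$.
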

\begin{proof}
    This follows from the discussion carried out in Section \ref{chapter.root_to_leaf}, and specifically Theorem \ref{theorem.-1_eigenvalue}.
\end{proof}
Finally, we point out that, similarly to the classic theory developed on graphs, there exist Cheeger inequalities that relate the corresponding spectral gaps to combinatorial measures that quantify how far the simplicial complex is from admitting relevant coherent components; these will be discussed in Section \ref{section.cheeger_inequalities}. \\

To conclude this section, we write down the explicit matrix form of $\delta_k$, $\Delta_k^\up$, and $\Delta_k^\down$:
\[
(\delta_k)_{\tau \sigma} = \begin{cases}
    (k + 2)^{-1/2} \cdot [\tau : \sigma] \cdot \LP(\ul{\tau})^{1/2} \cdot \LP(\ul{\sigma})^{-1/2} & \text{if $\ul{\tau} \supset \ul{\sigma}$,} \\
    0 & \text{otherwise,}
\end{cases}
\]
\[
(\Delta_k^\up)_{\sigma \sigma'} = \begin{cases}
    1 / (k + 2) & \text{if $\sigma = \sigma'$,} \\
    1 / (k + 2) \cdot s^\up(\sigma, \sigma') \\
    \qquad \qquad \cdot \LP(\ul{\sigma})^{-1/2} \cdot \LP(\ul{\sigma'})^{-1/2} \cdot \LP(\ul{\sigma} \cup \ul{\sigma'}) & \text{if $\sigma \sim^\up \sigma'$,} \\
    0 & \text{otherwise,}
\end{cases}
\]
and
\[
(\Delta_k^\down)_{\sigma \sigma'} = \begin{cases}
    1 / (k + 1) \cdot \LP(\ul{\sigma}) \cdot \sum_{\ul{\rho} \subset \ul{\sigma}} \LP(\ul{\rho})^{-1} & \text{if $\sigma = \sigma'$,} \\
    1 / (k + 1) \cdot s^\down(\sigma, \sigma') \\
    \qquad \qquad \cdot \LP(\ul{\sigma})^{1/2} \cdot \LP(\ul{\sigma'})^{1/2} \cdot \LP(\ul{\sigma} \cap \ul{\sigma'})^{-1} & \text{if $\sigma \sim^\down \sigma'$,} \\
    0 & \text{otherwise.}
\end{cases}
\]

\section{A note on partitions}\label{section.k+1_partition}

\begin{figure}[H]
    \centering
    \includegraphics[scale=0.8]{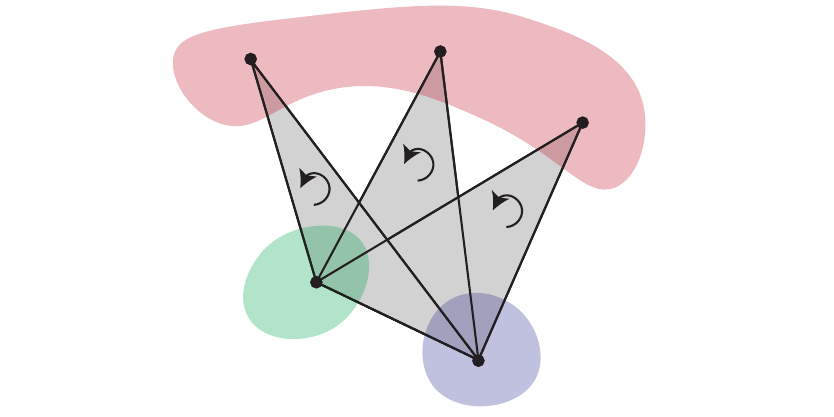}
    \caption{A $(k+1)$-partition induces a coherent-down-component in dimension $k$. In this example, $k=2$. The vertices are partitioned accordingly to the three colors green, blue, and red. The triangles are oriented as described in the proof of Proposition \ref{proposition.k+1_partition}, and form a coherent-down-component in dimension $2$.}
    \label{figure.bipartite_orientation}
\end{figure}

First, we point out that Definition \ref{definition.k-coherent_up_component} and Definition \ref{definition.k-coherent_down_component} of coherent-up- and -down-components extend the notion of bipartiteness on graphs. Indeed, if a graph (for simplicity, connected) is bipartite, then by orienting one class of vertices as $x=[\ul{x}]$ and the second class of vertices as $-x=-[\ul{x}]$, and all the edges from a vertex in the former class to a vertex in the latter, we see that the vertices constitute a coherent-up-component in dimension $0$, while the edges are a coherent-down-component in dimension $1$. The converse is also true on graphs. Assume that the edges of a connected graph constitute a coherent-down-component in dimension $1$. Choose an orientation as specified by Remark \ref{remark.adjustment_orientation}. Then, if a vertex $\ul{x}$ appears as the starting vertex in an oriented edge as $[\ul{x}, \ul{x'}]$ for some other vertex $\ul{x'}$, then $\ul{x}$ appears as starting vertex in all oriented edges containing it. The same is true for vertices appearing as terminal vertices. Therefore, the classes of starting and terminal vertices determine a bipartition of the graph. \\

The situation is more sophisticated on simplicial complexes. While one might expect that a notion of $(k+1)$-partition would be the natural concept to investigate, it turns out to be too strong of a condition. It implies the existence of a coherent-up-component in dimension $k-1$ (or, equivalently, a coherent-down-component in dimension $k$). However, the spectrum of the normalized Laplacian is only able to detect these topological structures, rather than $(k+1)$-partitions. Here, we define $(k+1)$-partitions on simplicial complexes, and show that one can in fact construct relevant structures from such a partition (Figure \ref{figure.bipartite_orientation}).
\begin{definition}\label{definition.k+1_partition}
    Assume that $\ul{C}_k$ is a quotient-down-component in dimension $k$ of a simplicial complex. Denote by $\ul{X}_0(\ul{C}_k)$ the set of vertices that belong to at least one $k$-face in $\ul{C}_k$. A \textbf{$(k+1)$-partition} of $\ul{X}_0(\ul{C}_k)$ is a collection of $k+1$ sets $V_0, \dots , V_k$ such that:
    \begin{enumerate}
        \item $\ul{X}_0(\ul{C}_k)$ is the disjoint union of $V_0, \dots, V_k$,
        \item For each $k$-face $\ul{\sigma} \in \ul{C}_k$, and for each $i = 0, \dots, k$, $\ul{\sigma} \cap V_i$ consists of exactly one vertex.
    \end{enumerate}
\end{definition}
\begin{proposition}\label{proposition.k+1_partition}
    Assume that a quotient-down-component $\ul{C}_k$ in dimension $k$ of a simplicial complex admits a $(k+1)$-partition. Then, $\ul{C}_k$ is a coherent-down-component.
\end{proposition}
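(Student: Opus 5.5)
We construct the orientation explicitly from the partition, as suggested by Figure \ref{figure.bipartite_orientation}. Each $k$-face $\ul{\sigma} \in \ul{C}_k$ has, by condition 2 of Definition \ref{definition.k+1_partition}, exactly one vertex in each class. Let $\ul{x}_i$ be its vertex in $V_i$, and set $\calO(\ul{\sigma}) = [\ul{x}_0, \ul{x}_1, \dots, \ul{x}_k]$, listing the vertices in the order of the classes. On all other faces $\calO$ may be chosen arbitrarily. Remark \ref{remark.adjustment_orientation} confirms that only the values of $\calO$ on $\ul{X}_k$ matter for Definition \ref{definition.k-coherent_down_component}.

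The key step is to check the defining condition of a coherent-down-component. Take $\sigma, \sigma' \in \calO(\ul{C}_k)$ and a $(k-1)$-face $\ul{\rho} \subset \ul{\sigma}, \ul{\sigma'}$. If $\ul{\sigma} = \ul{\sigma'}$ there is nothing to check. Otherwise $\ul{\rho} = \ul{\sigma} \cap \ul{\sigma'}$, and $\ul{\rho}$ meets every class except exactly one, say $V_i$. Since the vertex of $\ul{\sigma}$ in class $V_j$ lies in $\ul{\rho}$ for $j \neq i$, the two faces share these vertices and can differ only in their $V_i$-vertex. By construction, both $\sigma$ and $\sigma'$ place the removed vertex in position $i$. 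In both, the remaining vertices appear in the same order, namely the class order. The boundary formula $\partial_k(\sigma) = \sum_j (-1)^{j+1}\sigma_{-j}$ therefore gives $\sigma_{-i} = \sigma'_{-i}$ as the same oriented $(k-1)$-face, each with sign $(-1)^{i+1}$. Hence, for any fixed orientation $\rho$ of $\ul{\rho}$ (in particular for both lifts $\pm\rho$), we get $[\sigma : \rho] = [\sigma' : \rho]$. For $k = 0$ the notion is vacuous (cf.\ Proposition \ref{proposition.no_k+1_faces}), so we assume $k \ge 1$.

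It remains to verify that $\ul{C}_k$ is not a root, as Definition \ref{definition.k-coherent_down_component} requires. This holds because roots of a simplicial complex are exactly the $0$-faces, and $k \ge 1$. No real obstacle arises. The only point needing care is the bookkeeping of signs: the sign attached to $\rho$ in $\partial_k\sigma$ depends only on the position $i$ of the removed vertex and on the induced order of the remaining vertices. Both of these agree for $\sigma$ and $\sigma'$, precisely because positions are indexed by partition classes. Note that the condition in Definition \ref{definition.k-coherent_down_component} is stated for all $t \in X_{k-1}$, which covers both lifts of $\ul{\rho}$. Since the equality $[\sigma : t] = [\sigma' : t]$ is preserved under $t \mapsto -t$ by the compatibility axioms, checking one lift suffices.
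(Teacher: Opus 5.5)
Your proposal is correct and is essentially the paper's proof. You orient each $\ul{\sigma} \in \ul{C}_k$ by listing its vertices in the order of the classes $V_0, \dots, V_k$, and then observe that two down-adjacent faces differ only in their $V_i$-vertex, so both induce $(-1)^{i+1}$ times the same ordered face on $\ul{\sigma} \cap \ul{\sigma'}$.

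Your explicit check that $\ul{C}_k$ is not a root is a small addition that the paper leaves implicit. However, for $k = 0$ the conclusion genuinely fails rather than being vacuous: every $0$-dimensional quotient-down-component is a single root and trivially admits a $1$-partition. So $k \ge 1$ is an implicit hypothesis of the statement.
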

\begin{proof}
    Given $\ul{\sigma} \in \ul{C}_k$, define by $\ul{x}_{\ul{\sigma}}(i)$ the only element of $\ul{\sigma} \cap V_i$. Consider the orientation of $\ul{\sigma}$ given by $\sigma = [\ul{x}_{\ul{\sigma}}(0), \dots, \ul{x}_{\ul{\sigma}}(k)]$. Now, given a different $\ul{\sigma'} \in \ul{C}_k$ oriented as $\sigma' = [\ul{x}_{\ul{\sigma'}}(0), \dots, \ul{x}_{\ul{\sigma'}}(k)]$, assume that $\ul{\sigma}$ and $\ul{\sigma'}$ intersect in a $(k-1)$-face. This means that, for all but exactly one index $i$, $\ul{x}_{\ul{\sigma}}(i) = \ul{x}_{\ul{\sigma'}}(i)$. Then, both $\sigma$ and $\sigma'$ induce on the intersection $\ul{\sigma} \cap \ul{\sigma'}$, when taking their boundaries, the same orientation
    \begin{align*}
        (-1)^{i+1} [\ul{x}_{\ul{\sigma}}(0), \dots,  \ul{x}_{\ul{\sigma}}(i - 1), &\ul{x}_{\ul{\sigma}}(i + 1), \ul{x}_{\ul{\sigma}}(k)] \\
        &= (-1)^{i+1} [\ul{x}_{\ul{\sigma'}}(0), \dots,  \ul{x}_{\ul{\sigma'}}(i - 1), \ul{x}_{\ul{\sigma'}}(i + 1), \ul{x}_{\ul{\sigma'}}(k)].
    \end{align*}
    It follows that $\ul{C}_k$ is a coherent-down-component.
\end{proof}

\begin{remark}\label{remark.counterexample_k+1_partition}
    While for graphs the converse of Proposition \ref{proposition.k+1_partition} is also true, on a simplicial complex it is generally not true that the existence of a coherent-down-component in dimension $k$ guarantees the presence of a $(k+1)$-partition of its nodes. We provide a counterexample, drawn in Figure \ref{figure.counterexample_k+1_partition}. Consider the simplicial complex on nodes $\ul{X}_0=\{\ul{x}_0, \dots, \ul{x}_7\}$ obtained by considering all non-empty subsets of the supports of the oriented triangles
    \begin{align*}
        &[\ul{x}_0, \ul{x}_4, \ul{x}_5], [\ul{x}_0, \ul{x}_4, \ul{x}_3], [\ul{x}_7, \ul{x}_4, \ul{x}_3], [\ul{x}_7, \ul{x}_2, \ul{x}_3], \\
        &[\ul{x}_7, \ul{x}_2, \ul{x}_6], [\ul{x}_1, \ul{x}_2, \ul{x}_6], [\ul{x}_1, \ul{x}_5, \ul{x}_6], [\ul{x}_1, \ul{x}_5, \ul{x}_0].
    \end{align*}
	    The specified orientation on these triangles shows that $\ul{X}_2$ is a coherent-down-component in dimension $2$. However, it is not possible to $3$-partition the set of vertices $\ul{X}_0$ of this simplicial complex.
\end{remark}
\begin{figure}[t]
    \centering
    \includegraphics[scale=0.8]{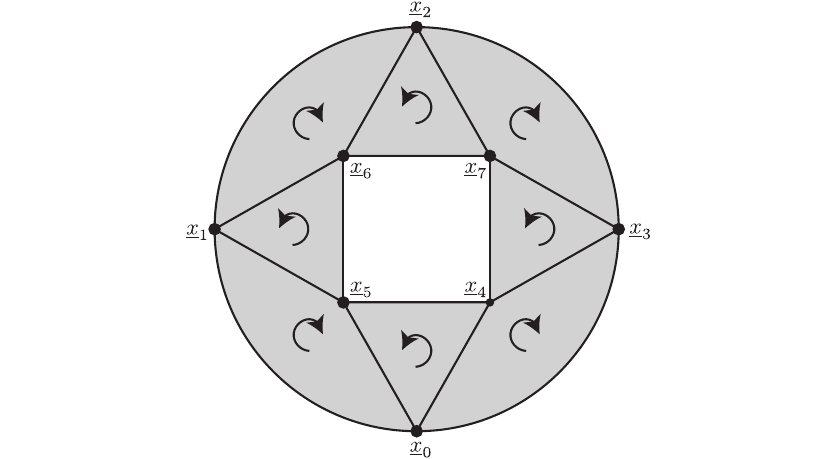}
    \caption{A visualization of the counterexample described in Remark \ref{remark.counterexample_k+1_partition}. Triangles constitute a coherent-down-component in dimension $2$. However, the nodes of the simplicial complex cannot by $3$-partitioned.}
    \label{figure.counterexample_k+1_partition}
\end{figure}
\begin{remark}
	    It is straightforward to deduce Proposition \ref{proposition.no_k+1_faces} for a quotient-down-component $\ul{C}_k$ in dimension $k$ that admits a $(k+1)$-partition: if there existed a $(k+1)$-face $\ul{\tau}$ over $\ul{C}_k$, then two of its $k+2$ nodes would belong to the same class in the partition, and a $k$-boundary subface $\ul{\sigma} \subset \ul{\tau}$ containing them both would belong to $\ul{C}_k$, contradicting the fact that $\ul{C}_k$ is $(k+1)$-partitioned.
\end{remark}

\section{Cheeger inequalities}\label{section.cheeger_inequalities}
After recalling more classic Cheeger inequalities on graphs (Section \ref{subsection.auxiliary_graphs}), we study the up- (Section \ref{subsection.cheeger_up_case}) and down- (Section \ref{subsection.cheeger_down_case}) cases, merging them into combined Cheeger inequalities (Section \ref{subsection.cheeger_merging_everything}). We conclude our discussion by providing an explicit example to show the effectiveness of such combined Cheeger inequalities (Section \ref{subsection.example_tetrahedron}).

\subsection{Auxiliary graphs}\label{subsection.auxiliary_graphs}
The main strategy proposed by \cite{jost2023} for proving Cheeger inequalities on simplicial complexes involves constructing auxiliary graphs based on the up- or down-adjacency of $k$-faces and establishing a relationship between the Laplacians of these objects. We refer the reader to \cite{atay2020} for proofs of Cheeger inequalities on (possibly signed) graphs. In their paper, the authors also develop a theory for multi-way Cheeger constants and inequalities, which provide double bounds on eigenvalues further from the maximal and minimal ends of the spectrum. We note that, although their theory would also apply to our framework, we prefer not to delve into this topic to avoid excessive notational complexity. \\

We will make use of double covers of signed graphs (not graded)
\[
\Gamma^\aux = (X^\aux, E^\aux, s^\aux, -^\aux, \omega^\aux, \mu^\aux),
\]
defined similarly to Section \ref{chapter.root_to_leaf}. The signature function $s^\aux$, the edge structure $E^\aux$, and the involution $-^\aux$ are compatible with each other, as described in Definition \ref{definition.double_covers_graded_signed_graphs}. Two additional ingredients are:
\begin{itemize}
    \item $\omega^\aux \colon \ul{E}^\aux \to \R_{>0}$ is a \textbf{weight function} on (undirected) edges of the quotient, and
    \item $\mu^\aux \colon \ul{X}^\aux \to \R_{>0}$ is a \textbf{measure function} on quotient nodes.
\end{itemize}
As usual, we can also consider an orientation $\calO^\aux \colon \ul{\Gamma}^\aux \to \Gamma^\aux$, and the signed subgraph $\calO^\aux(\ul{\Gamma}^\aux)$ in $\Gamma^\aux$. We will introduce and recall Cheeger inequalities for the quotient Laplacian $\Delta^{\ul{\Gamma}^\aux}$ and the signed Laplacian $\Delta^{\calO^\aux(\ul{\Gamma}^\aux)}$. \\

The symmetric positive semi-definite operator $\Delta^{\ul{\Gamma}^\aux}$ is defined, on functions $f \in \scrF^{\ul{\Gamma}^\aux}$, as
\[
\big(\Delta^{\ul{\Gamma}^\aux} f\big) (\ul{u}) = \frac{1}{\mu^\aux(\ul{u})^{1/2}} \cdot \sum_{\ul{u'} \sim^\aux \ul{u}} \omega^\aux_{\ul{u u'}} \cdot \left( \frac{f(\ul{u})}{\mu^\aux(\ul{u})^{1/2}} - \frac{f(\ul{u'})}{\mu^\aux(\ul{u'})^{1/2}} \right).
\]
More explicitly, the matrix form of this operator is given, for $\ul{u}, \ul{u'} \in \ul{X}^\aux$, by
\[
\Delta^{\ul{\Gamma}^\aux}_{\ul{u u'}} = \begin{cases}
    \mu^\aux(\ul{u})^{-1} \cdot \sum_{\ul{u''} \sim^\aux \ul{u}} \omega^\aux_{\ul{u u''}} & \text{if $\ul{u} = \ul{u'}$}, \\
    - \mu^\aux(\ul{u})^{-1/2} \cdot \mu^\aux(\ul{u'})^{-1/2} \cdot \omega^\aux_{\ul{u u'}} & \text{if $\ul{u} \sim^\aux \ul{u'}$}, \\
    0 & \text{otherwise}.
\end{cases}
\]
We denote by $(\mu^\aux)^{1/2}$ the function in $\scrF^{\ul{\Gamma}^\aux}$ defined as $(\mu^\aux)^{1/2}(\ul{u}) = \mu^\aux(\ul{u})^{1/2}$. Then, $(\mu^\aux)^{1/2}$ is in the kernel of $\Delta^{\ul{\Gamma}^\aux}$, so that $\lambda_{\min} (\Delta^{\ul{\Gamma}^\aux}) = 0$. We assume that the graph $\ul{\Gamma}^\aux$ is connected. In this case, the value $0$ has multiplicity one as an eigenvalue. We recall here the Cheeger inequality for the smallest positive eigenvalue $\lambda_{\min + 1} (\Delta^{\ul{\Gamma}^\aux})$ of $\Delta^{\ul{\Gamma}^\aux}$. Given a subset $\ul{Y}^\aux \subseteq \ul{X}^\aux$ such that $\ul{Y}^\aux \neq \emptyset, \ul{X}^\aux$, we define $\ul{\beta}^\aux(\ul{Y}^\aux)$ as
\[
\ul{\beta}^\aux(\ul{Y}^\aux) = \frac{\omega^\aux \big(\ul{Y}^\aux, \ul{X}^\aux \setminus \ul{Y}^\aux \big)}{\min \big( \mu^\aux(\ul{Y}^\aux), \mu^\aux(\ul{X}^\aux \setminus \ul{Y}^\aux) \big)},
\]
where we define
\[
\omega^\aux \big(\ul{Y}^\aux, \ul{X}^\aux \setminus \ul{Y}^\aux \big) = \sum_{\substack{\ul{u}, \ul{u'} \\ \ul{u} \sim^\aux \ul{u'} \\ \ul{u} \in \ul{Y}^\aux, \ul{u'} \in \ul{X}^\aux \setminus \ul{Y}^\aux}} \omega_{\ul{uu'}},
\]
and
\[
\mu^\aux(\ul{Y}^\aux) = \sum_{\ul{u} \in \ul{Y}^\aux} \mu^\aux(\ul{u}).
\]
Then, the Cheeger constant $\ul{h}^\aux$ is given by
\[
\ul{h}^\aux = \min_{\substack{\ul{Y}^\aux \subseteq \ul{X}^\aux \\ \ul{Y}^\aux \neq \emptyset, \ul{X}^\aux}} \ul{\beta}^\aux (\ul{Y}^\aux).
\]
Notice that $\ul{\beta}^\aux$ and $\ul{h}^\aux$ are underlined to remark that these quantities are associated with the quotient $\ul{\Gamma}^\aux$.
We also define $d_{\mu^\aux}^{\omega^\aux}$ as
\[
d_{\mu^\aux}^{\omega^\aux} = \max_{\ul{u} \in \ul{X}^\aux} \frac{\sum_{\ul{u'} \sim^\aux \ul{u}} \omega_{\ul{u u'}}}{\mu^\aux(\ul{u})}.
\]
\begin{remark}\label{remark.d_mu_omega_zero}
    Notice that $d_{\mu^\aux}^{\omega^\aux}$ might be zero. This happens if and only if there are no edges. Since we are assuming that $\ul{\Gamma}^\aux$ is connected, $d_{\mu^\aux}^{\omega^\aux} = 0$ if and only if $\ul{\Gamma}^\aux$ has only one node (that is, $\Gamma^\aux$ is simply constituted by an involutory pair of isolated nodes). In this case, $\Delta^{\ul{\Gamma}^\aux}$ is the $1 \times 1$ zero matrix.
\end{remark}
Assuming that $\ul{\Gamma}^\aux$ is connected and not reduced to one single node, the Cheeger inequality for the smallest positive eigenvalue of $\Delta^{\ul{\Gamma}^\aux}$ is
\begin{equation}\label{equation.cheeger_auxiliary_quotient}
    \frac{\big(\ul{h}^\aux \big)^2}{2 \cdot d_{\mu^\aux}^{\omega^\aux}} \le \lambda_{\min + 1} (\Delta^{\ul{\Gamma}^\aux}) \le 2 \cdot \ul{h}^\aux.
\end{equation}

Moving to the signed Laplacian, the symmetric positive semi-definite operator $\Delta^{\calO^\aux(\ul{\Gamma}^\aux)}$ is defined, on functions $f \in \scrF^{\calO^\aux(\ul{\Gamma}^\aux)}$, as
\[
\big(\Delta^{\calO^\aux(\ul{\Gamma}^\aux)} f\big) (u) = \frac{1}{\mu^\aux(\ul{u})^{1/2}} \cdot \sum_{\ul{u'} \sim^\aux \ul{u}} \hspace{-0.25cm} \omega^\aux_{\ul{u u'}} \cdot \left( \frac{f(u)}{\mu^\aux(\ul{u})^{1/2}} - s^\aux(u, u') \cdot \frac{f(u')}{\mu^\aux(\ul{u'})^{1/2}} \right).
\]
More explicitly, the matrix form of this operator is given, for $u, u' \in \calO^\aux(\ul{X}^\aux)$, by
\[
\Delta^{\calO^\aux(\ul{\Gamma}^\aux)}_{u u'} = \begin{cases}
    \mu^\aux(\ul{u})^{-1} \cdot \sum_{\ul{u''} \sim^\aux \ul{u}} \omega^\aux_{\ul{u u''}} & \text{if $u = u'$}, \\
    - s^\aux(u, u') \cdot \mu^\aux(\ul{u})^{-1/2} \cdot \mu^\aux(\ul{u'})^{-1/2} \cdot \omega^\aux_{\ul{u u'}} & \text{if $\ul{u} \sim^\aux \ul{u'}$}, \\
    0 & \text{otherwise}.
\end{cases}
\]
In contrast with the quotient scenario, the value $0$ is not necessarily an eigenvalue of the operator $\Delta^{\calO^\aux(\ul{\Gamma}^\aux)}$. Assuming that $\ul{\Gamma}^\aux$ is connected and not reduced to a single node (or, equivalently, that $\Gamma^\aux$ is connected), then $0$ is an eigenvalue of $\Delta^{\calO^\aux(\ul{\Gamma}^\aux)}$ if and only if $\Gamma^\aux$ is balanced, that is: there exists an orientation $\calO^\aux \colon \ul{\Gamma}^\aux \to \Gamma^\aux$ such that $s^\aux (u, u') = 1$ for any $u, u' \in \calO^\aux(\ul{X}^\aux)$. More generally, a Cheeger inequality quantifies the spectral gap $\lambda_{\min} (\Delta^{\calO^\aux(\ul{\Gamma}^\aux)})$. Given a subset $\ul{Y}^\aux \subseteq \ul{X}^\aux$ such that $\ul{Y}^\aux \neq \emptyset$ (possibly $\ul{Y}^\aux = \ul{X}^\aux$), and an orientation $\calO^\aux \colon \ul{\Gamma}^\aux \to \Gamma^\aux$, we define $\beta^\aux (\ul{Y}^\aux, \calO^\aux)$ as
\[
\beta^\aux (\ul{Y}^\aux, \calO^\aux) = \frac{\omega^\aux \big(\ul{Y}^\aux, \ul{X}^\aux \setminus \ul{Y}^\aux \big) + \omega^\aux_{-} (\ul{Y}^\aux, \calO^\aux)}{\mu^\aux(\ul{Y}^\aux)},
\]
where we additionaly define
\[
\omega^\aux_{-} (\ul{Y}^\aux, \calO^\aux) = \sum_{\substack{\ul{u}, \ul{u'} \\ \ul{u} \sim^\aux \ul{u'} \\ \ul{u}, \ul{u'} \in \ul{Y}^\aux \\ s^\aux(u, u') = -1}} \omega_{\ul{uu'}},
\]
where $u, u'$ are the images of $\ul{u}, \ul{u'}$ via $\calO^\aux$ (and therefore $\omega^\aux_{-}$ also depend on $\calO^\aux$). Notice that, given two adjacent nodes $\ul{u}$ and $\ul{u'}$, the pairs of nodes ``first $\ul{u}$, then $\ul{u'}$'' and ``first $\ul{u'}$, then $\ul{u}$'' are considered distinct, and they both contribute as summands. Then, the Cheeger constant $h^\aux$ is given by
\[
h^\aux = \min_{\substack{\ul{Y}^\aux \subseteq \ul{X}^\aux, \calO^\aux \\ \ul{Y}^\aux \neq \emptyset}} \beta^\aux (\ul{Y}^\aux, \calO^\aux).
\]
Notice that $\beta^\aux$ and $h^\aux$ are not underlined to remark that these quantities are associated with possible orientations $\calO^\aux \colon \ul{\Gamma}^\aux \to \Gamma^\aux$ and the signature $s^\aux$, rather than merely the quotient $\ul{\Gamma}^\aux$. The Cheeger inequality for the smallest eigenvalue of $\Delta^{\calO^\aux(\ul{\Gamma}^\aux)}$ is
\begin{equation}\label{equation.cheeger_auxiliary_orientation}
    \frac{\big(h^\aux \big)^2}{2 \cdot d_{\mu^\aux}^{\omega^\aux}} \le \lambda_{\min} \big(\Delta^{\calO^\aux(\ul{\Gamma}^\aux)}\big) \le 2 \cdot h^\aux.
\end{equation}

\subsection{The up-case}\label{subsection.cheeger_up_case}
Given $k \ge 0$, we consider a quotient-up-component $\ul{C}_k$ in dimension $k$ of a simplicial complex that is not a leaf. Notice that such quotient component contains at least two $k$-faces, thanks to the simplicial structure. We construct the following auxiliary graph $\Gamma^{\aux, \up} = (X^{\aux, \up}, E^{\aux, \up}, s^{\aux, \up}, -^{\aux, \up}, \omega^{\aux, \up}, \mu^{\aux, \up})$:
\begin{itemize}
    \item The nodes $X^{\aux, \up}$ are the oriented $k$-faces in $C_k$,
    \item The edges $E^{\aux, \up}$ correspond to the relation of up-adjacency $\sim^\up$ on oriented $k$-faces,
    \item The signature function is $s^{\aux, \up} = s^\up$, that is, $s^\up(\sigma, \sigma') = [\sigma \cup \sigma' : \sigma] \cdot [\sigma \cup \sigma' : \sigma']$ on up-adjacent oriented $k$-faces $\sigma, \sigma'$,
    \item The involution $-^{\aux, \up}$ simply reverses the orientation on oriented $k$-faces,
    \item The weight function $\omega^{\aux, \up} = \omega^\up$ is given, on up-adjacent $k$-faces $\ul{\sigma}, \ul{\sigma'}$, by $\omega^\up_{\ul{\sigma} \ul{\sigma'}} = \LP(\ul{\sigma} \cup \ul{\sigma'})$,
    \item Finally, the measure $\mu^{\aux, \up} = \mu$ is given by $\mu(\ul{\sigma}) = \LP(\ul{\sigma})$.
\end{itemize}
In Figure \ref{figure.auxiliary_graphs}, we provide an example of this construction, together with its down-counterpart. For any $\ul{\sigma} \in \ul{C}_k$, we have
\begin{align*}
    \frac{\sum_{\ul{\sigma'} \sim^\up \ul{\sigma}} \omega^\up_{\ul{\sigma\sigma'}}}{\mu(\ul{\sigma})} &= \frac{\sum_{\ul{\sigma'} \sim^\up \ul{\sigma}} \LP(\ul{\sigma} \cup \ul{\sigma'})}{\LP(\ul{\sigma})} \\
    &= \frac{\sum_{\ul{\tau} \supset \ul{\sigma}} \sum_{\substack{\ul{\sigma'} \subset \ul{\tau} \\ \ul{\sigma'} \neq \ul{\sigma}}} \LP(\ul{\tau})}{\LP(\ul{\sigma})} \\
    &= \frac{(k + 1) \cdot \LP(\ul{\sigma})}{\LP(\ul{\sigma})} \\
    &= k+1.
\end{align*}
It follows that $d_k^\up = d_\mu^{\omega^\up} = k+1$. Following the computation above, we can also simplify the matrix form of the quotient Laplacian of the constructed auxiliary graph:
\[
\Delta^{\ul{\Gamma}^{\aux, \up}}_{\ul{\sigma\sigma'}} = \begin{cases}
    k+1 & \text{if $\ul{\sigma} = \ul{\sigma'}$,} \\
    - \LP(\ul{\sigma})^{-1/2} \cdot \LP(\ul{\sigma'})^{-1/2} \cdot \LP(\ul{\sigma} \cup \ul{\sigma'}) & \text{if $\ul{\sigma} \sim^\up \ul{\sigma'}$}, \\
    0 & \text{otherwise}.
\end{cases}
\]
Denote by $N_{\ul{C}_k}$ the number of $k$-faces in $\ul{C}_k$. Since $\RP(\ul{\sigma}) = (k+1)!$ on $k$-faces $\ul{\sigma}$, and $\RP(\ul{\tau}) = (k+2)!$ on $(k+1)$-faces $\ul{\tau}$, we crucially observe that
\[
\Delta^{\ul{\Gamma}^{\aux, \up}} = (k+2) \cdot I_{N_{\ul{C}_k}} - (k+2) \cdot A^{\ul{C}_k, \up}_k,
\]
or equivalently
\[
I_{N_{\ul{C}_k}} - A^{\ul{C}_k, \up}_k = \frac{\Delta^{\ul{\Gamma}^{\aux, \up}}}{k + 2}.
\]
In particular,
\[
1 - \lambda_{\max - 1} \big( A^{\ul{C}_k, \up}_k \big) = \frac{\lambda_{\min + 1} \big( \Delta^{\ul{\Gamma}^{\aux, \up}} \big)}{k + 2}.
\]
We define the \textbf{$k$-th quotient up-Cheeger constant} $\ul{h}_{C_k}^\up$ as the Cheeger constant $\ul{h}^{\aux, \up}$ associated with the considered auxiliary graph. More explicitly, 
\[
\ul{h}_{C_k}^\up = \min_{\substack{\ul{Y} \subseteq \ul{C}_k \\ \ul{Y} \neq \emptyset, \ul{C}_k}} \ul{\beta}_k^\up (\ul{Y}),
\]
where
\[
\ul{\beta}_k^\up (\ul{Y}) = \frac{\omega^\up \big(\ul{Y}, \ul{C}_k \setminus \ul{Y} \big)}{\min \big( \mu(\ul{Y}), \mu(\ul{C}_k \setminus \ul{Y}) \big)}.
\]
\begin{remark}\label{remark.beta_up_quotient_positive}
    Notice that, since $\ul{C}_k$ is connected, $\ul{\beta}_k^\up (\ul{Y})$ is positive, and so is $\ul{h}_{C_k}^\up$.
\end{remark}
From the above discussion, together with Equation \ref{equation.cheeger_auxiliary_quotient}, we obtain:
\begin{lemma}\label{lemma.cheeger_inequality_quotient_up}
    Given $k \ge 0$ and a quotient-up-component $\ul{C}_k$ in dimension $k$ of a simplicial complex that is not a leaf, we have that
    \[
    \frac{\big(\ul{h}_{C_k}^\up \big)^2}{2 \cdot (k+1) \cdot (k+2)} \le 1 - \lambda_{\max - 1} \big( A^{\ul{C}_k, \up}_k \big) \le \frac{2 \cdot \ul{h}_{C_k}^\up}{k + 2}.
    \]
\end{lemma}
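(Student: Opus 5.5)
The plan is to transfer the graph Cheeger inequality of Equation \ref{equation.cheeger_auxiliary_quotient} to the operator $A^{\ul{C}_k, \up}_k$. The bridge is the identity $I_{N_{\ul{C}_k}} - A^{\ul{C}_k, \up}_k = \Delta^{\ul{\Gamma}^{\aux, \up}}/(k+2)$ established just before the statement. I would begin by checking that the auxiliary quotient graph $\ul{\Gamma}^{\aux,\up}$ satisfies the hypotheses of Equation \ref{equation.cheeger_auxiliary_quotient}.

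\emph{Connectivity.} Its nodes are the $k$-faces of $\ul{C}_k$, and its edges are given by up-adjacency. Since $\ul{C}_k$ is a quotient-up-component, it is maximally up-connected, so $\ul{\Gamma}^{\aux,\up}$ is connected.

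\emph{More than one node.} Since $\ul{C}_k$ is not a leaf, some $k$-face $\ul{\sigma} \in \ul{C}_k$ lies in a $(k+1)$-face $\ul{\tau}$. All $k+2 \ge 2$ boundary subfaces of $\ul{\tau}$ are pairwise up-adjacent, hence they all belong to $\ul{C}_k$. By Remark \ref{remark.d_mu_omega_zero}, this gives $d_\mu^{\omega^\up} \neq 0$. The computation above in fact shows $d_\mu^{\omega^\up} = k+1$.

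\emph{Well-definedness of the quantities.} The weights $\omega^\up$ and the measure $\mu$ are strictly positive, because $\LP$ takes positive integer values. Hence all the quantities entering the inequality are well defined.

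Next I would identify the spectral quantities. The matrix $\Delta^{\ul{\Gamma}^{\aux,\up}}$ has $(\mu)^{1/2}$ in its kernel, and $0$ is a simple eigenvalue by connectivity. Through the affine relation, the eigenvalues of $A^{\ul{C}_k, \up}_k$ are $1-\lambda/(k+2)$ for $\lambda$ ranging over the spectrum of $\Delta^{\ul{\Gamma}^{\aux,\up}}$, with the ordering reversed. In particular, $\lambda_{\max}(A^{\ul{C}_k,\up}_k) = 1$ corresponds to $\lambda_{\min}=0$, and $\lambda_{\max-1}$ corresponds to $\lambda_{\min+1}$. This gives the displayed identity $1-\lambda_{\max-1}(A^{\ul{C}_k,\up}_k) = \lambda_{\min+1}(\Delta^{\ul{\Gamma}^{\aux,\up}})/(k+2)$. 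The second largest eigenvalue exists because there are at least two nodes.

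Finally, I would substitute into Equation \ref{equation.cheeger_auxiliary_quotient}, using $\ul{h}^{\aux,\up} = \ul{h}^\up_{C_k}$ (this is the definition) and $d_\mu^{\omega^\up} = k+1$, and then divide by $k+2$. This yields
\[
\frac{(\ul{h}^\up_{C_k})^2}{2(k+1)(k+2)} \le 1-\lambda_{\max-1}\big(A^{\ul{C}_k,\up}_k\big) \le \frac{2\,\ul{h}^\up_{C_k}}{k+2}.
\]

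The only delicate point is the identity $\Delta^{\ul{\Gamma}^{\aux,\up}} = (k+2)(I - A^{\ul{C}_k,\up}_k)$. For the diagonal, $A$ has entry $H(\ul{\sigma})^{-1}\sum_{\ul{\tau}\supset\ul{\sigma}} H(\ul{\tau}) = \LP(\ul{\sigma})/(k+2)$ divided by $\LP(\ul{\sigma})$, which equals $1/(k+2)$, matching $k+1 = (k+2)(1 - 1/(k+2))$. For the off-diagonal entries, the values $\RP = (k+1)!$ and $\RP = (k+2)!$ from Lemma \ref{lemma.simple_description_root_path_function_sc} give the factor $1/(k+2)$, multiplied by the $\LP$ ratios. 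This identity is already asserted in the text, so it only needs a brief recheck.
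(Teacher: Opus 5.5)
Your proposal is correct and follows the paper's own route. You build the weighted auxiliary graph on $\ul{C}_k$, use $d_\mu^{\omega^\up} = k+1$ and the identity $I_{N_{\ul{C}_k}} - A_k^{\ul{C}_k,\up} = \Delta^{\ul{\Gamma}^{\aux,\up}}/(k+2)$, and divide the graph Cheeger inequality of Equation~\ref{equation.cheeger_auxiliary_quotient} by $k+2$. Your checks of connectivity, of the component containing at least two $k$-faces, and of the matrix entries make explicit the hypotheses that the paper only mentions in passing.
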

With the same approach, we can analyze the signed Laplacian $\Delta^{\calO^{\aux, \up}(\ul{\Gamma}^{\aux, \up})}$:
\[
\Delta^{\calO^{\aux, \up}(\ul{\Gamma}^{\aux, \up})}_{\ul{\sigma\sigma'}} = \begin{cases}
    k+1 & \text{if $\ul{\sigma} = \ul{\sigma'}$,} \\
    - s^\up(\sigma, \sigma') \\
    \qquad \cdot \LP(\ul{\sigma})^{-1/2} \cdot \LP(\ul{\sigma'})^{-1/2} \cdot \LP(\ul{\sigma} \cup \ul{\sigma'}) & \text{if $\ul{\sigma} \sim^\up \ul{\sigma'}$}, \\
    0 & \text{otherwise}.
\end{cases}
\]
From this, it follows that
\[
\Delta^{\calO^{\aux, \up}(\ul{\Gamma}^{\aux, \up})} = (k + 2) \cdot I_{N_{\ul{C}_k}} + (k + 2) \cdot A_k^{\calO(\ul{C}_k), \up},
\]
or equivalently
\[
I_{N_{\ul{C}_k}} - \big( - A_k^{\calO(\ul{C}_k), \up} \big) = \frac{\Delta^{\calO^{\aux, \up}(\ul{\Gamma}^{\aux, \up})}}{k + 2}.
\]
In particular,
\[
1 - \left( - \lambda_{\min} \big( A_k^{\calO(\ul{C}_k), \up} \big) \right) =
1 - \lambda_{\max} \big( - A_k^{\calO(\ul{C}_k), \up} \big) = \frac{\lambda_{\min} \big( \Delta^{\calO^{\aux, \up}(\ul{\Gamma}^{\aux, \up})} \big)}{k + 2}.
\]
We define the \textbf{$k$-th signed up-Cheeger constant} $h_{C_k}^\up$ as the Cheeger constant $h^{\aux, \up}$ associated with the considered auxiliary graph, that is:
\[
h_{C_k}^\up = \min_{\substack{\ul{Y} \subseteq \ul{C}_k, \calO \\ \ul{Y} \neq \emptyset}} \beta_k^\up (\ul{Y}, \calO),
\]
where
\[
\beta_k^\up (\ul{Y}, \calO) = \frac{\omega^\up \big(\ul{Y}, \ul{C}_k \setminus \ul{Y} \big) + \omega^\up_{-} (\ul{Y}, \calO)}{\mu(\ul{Y})}.
\]
\begin{remark}\label{remark.beta_up_signed_zero}
    Notice that, since $\ul{C}_k$ is connected, $\ul{\beta}_k^\up (\ul{Y}) = 0$ if and only if $\ul{Y} = \ul{C}_k$, and
    \[
    \omega^\up_{-} (\ul{Y}, \calO) = \omega^\up_{-} (\ul{C}_k, \calO) = 0
    \]
    for some orientation $\calO$, that is, $\ul{C}_k$ is a coherent-up-component in dimension $k$. Therefore, $h_{C_k}^\up = 0$ if and only if $\ul{C}_k = 0$ is a coherent-up-component.
\end{remark}
From Equation \ref{equation.cheeger_auxiliary_orientation}, we obtain:
\begin{lemma}\label{lemma.cheeger_inequality_orientation_up}
    Given $k \ge 0$ and a quotient-up-component $\ul{C}_k$ in dimension $k$ of a simplicial complex that is not a leaf, we have that
    \[
    \frac{\big(h_{C_k}^\up \big)^2}{2 \cdot (k+1) \cdot (k+2)} \le 1 - \left( - \lambda_{\min} \big( A_k^{\calO(\ul{C}_k), \up} \big) \right) \le \frac{2 \cdot h_{C_k}^\up}{k + 2}.
    \]
\end{lemma}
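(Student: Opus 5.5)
The plan is to obtain Lemma \ref{lemma.cheeger_inequality_orientation_up} directly from the signed Cheeger inequality (Equation \ref{equation.cheeger_auxiliary_orientation}), applied to the up-auxiliary graph $\Gamma^{\aux, \up}$ built on $C_k$. This mirrors the way Lemma \ref{lemma.cheeger_inequality_quotient_up} follows from Equation \ref{equation.cheeger_auxiliary_quotient}.

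First, I will check that the hypotheses of Equation \ref{equation.cheeger_auxiliary_orientation} hold. Since $\ul{C}_k$ is a quotient-up-component, it is up-connected by definition, so the quotient $\ul{\Gamma}^{\aux, \up}$ is connected. Since $\ul{C}_k$ is not a leaf, it contains at least two $k$-faces, because any $(k+1)$-face over a face of $\ul{C}_k$ contributes $k+2\ge 2$ up-adjacent boundary faces. Hence $\ul{\Gamma}^{\aux, \up}$ is not a single node, and Remark \ref{remark.d_mu_omega_zero} does not apply. Next, I will use the computation already carried out above the statement: the weighted degree ratio equals $k+1$ at every node, so $d_\mu^{\omega^\up} = k+1$ (not merely bounded by it). I will also verify that $h^{\aux, \up}$ coincides with $h_{C_k}^\up$. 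This holds because orientations $\calO^{\aux}$ of $\ul{\Gamma}^{\aux,\up}$ correspond to restrictions to $\ul{C}_k$ of orientations $\calO$ of $\ul{\Gamma}$, and $s^{\aux,\up}=s^\up$ depends only on the orientation of $k$-faces.

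Second, I will justify the matrix identity $\Delta^{\calO^{\aux, \up}(\ul{\Gamma}^{\aux, \up})} = (k+2)\big(I + A_k^{\calO(\ul{C}_k), \up}\big)$ entrywise. The diagonal of $\Delta$ is $k+1$. The diagonal of $A_k^{\calO(\ul\Gamma),\up}$ at a non-leaf $\ul\sigma$ is $-H(\ul\sigma)^{-1}\sum_{\ul\tau\supset\ul\sigma}H(\ul\tau) = -\tfrac{(k+1)!}{\LP(\ul\sigma)}\cdot\tfrac{\LP(\ul\sigma)}{(k+2)!} = -\tfrac{1}{k+2}$, using Lemma \ref{lemma.simple_description_root_path_function_sc}, so $(k+2)(1-\tfrac1{k+2})=k+1$ as required. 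For the off-diagonal entries, the unique $\ul\tau=\ul\sigma\cup\ul\sigma'$ gives $-(k+2)\cdot s^\up(\sigma,\sigma')\,\LP(\ul\sigma\cup\ul\sigma')\,\LP(\ul\sigma)^{-1/2}\LP(\ul\sigma')^{-1/2}/(k+2)$, which matches. One subtlety must be handled: the faces of $\ul{C}_k$ that are leaves. These cannot occur, since a face in a non-singleton up-component has a coface. Therefore the restriction $A_k^{\calO(\ul{C}_k),\up}$ is exactly the restriction of the global operator, with no leaf diagonal term.

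Third, taking minimal eigenvalues of this identity gives $\lambda_{\min}(\Delta^{\calO^{\aux,\up}}) = (k+2)\big(1+\lambda_{\min}(A_k^{\calO(\ul C_k),\up})\big)$. Substituting into Equation \ref{equation.cheeger_auxiliary_orientation} with $d=k+1$ and dividing by $k+2$ yields exactly the two bounds in the statement. The only real obstacle is the bookkeeping in the second step, specifically matching the diagonal entries and confirming that the Cheeger constant of the auxiliary graph is literally $h_{C_k}^\up$, including the convention that ordered pairs are counted twice in $\omega_-$. Everything else is substitution.
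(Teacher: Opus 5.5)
Your proposal is correct and follows the paper's route: the paper obtains the lemma by applying the signed Cheeger inequality (Equation \ref{equation.cheeger_auxiliary_orientation}) to $\Gamma^{\aux,\up}$, using $d_\mu^{\omega^\up}=k+1$ and the identity $\Delta^{\calO^{\aux,\up}(\ul{\Gamma}^{\aux,\up})}=(k+2)\big(I+A_k^{\calO(\ul{C}_k),\up}\big)$. Your entrywise check of that identity, together with the observation that a non-singleton up-component contains no leaf faces, fills in details the paper only asserts; the check computes the diagonal entry $-1/(k+2)$ via $\RP(\ul\sigma)=(k+1)!$.
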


\subsection{The down-case}\label{subsection.cheeger_down_case}
We adjust to the down-case the argument carried out in Section \ref{subsection.cheeger_up_case}. Consider $k \ge 0$ and a quotient-down-component $\ul{C}_k$ in dimension $k$ of a simplicial complex that is not a root. We enforce on $\ul{C}_k$ a stronger condition: assume that $\ul{C}_k$ contains at least two $k$-faces. Then, consider the following auxiliary graph $\Gamma^{\aux, \down} = (X^{\aux, \down}, E^{\aux, \down}, s^{\aux, \down}, -^{\aux, \down}, \omega^{\aux, \down}, \mu^{\aux, \down})$:
\begin{itemize}
    \item The nodes $X^{\aux, \down}$ are the oriented $k$-faces in $C_k$,
    \item The edges $E^{\aux, \down}$ correspond to the relation of down-adjacency $\sim^\down$ on oriented $k$-faces,
    \item The signature function is $s^{\aux, \down} = s^\down$, that is, $s^\down(\sigma, \sigma') = [\sigma : \sigma \cap \sigma'] \cdot [\sigma' : \sigma \cap \sigma']$ on down-adjacent oriented $k$-faces $\sigma, \sigma'$,
    \item The involution $-^{\aux, \down}$ simply reverses the orientation on oriented $k$-faces,
    \item The weight function $\omega^{\aux, \down} = \omega^\down$ is given, on down-adjacent $k$-faces $\ul{\sigma}, \ul{\sigma'}$, by
    \[
    \omega^\down_{\ul{\sigma} \ul{\sigma'}} = \frac{\LP(\ul{\sigma}) \cdot \LP(\ul{\sigma'})}{\LP(\ul{\sigma} \cap \ul{\sigma'})},
    \]
    \item Finally, the measure $\mu^{\aux, \down} = \mu$ is given by $\mu(\ul{\sigma}) = \LP(\ul{\sigma})$.
\end{itemize}
In Figure \ref{figure.auxiliary_graphs}, we provide an example of this construction, together with its up-counterpart. For any $\sigma \in \ul{C}_k$,
\begin{align*}
    \frac{\sum_{\ul{\sigma'} \sim^\down \ul{\sigma}} \omega^\down_{\ul{\sigma\sigma'}}}{\mu(\ul{\sigma})} &= \frac{\sum_{\ul{\sigma'} \sim^\down \ul{\sigma}} \frac{\LP(\ul{\sigma}) \cdot \LP(\ul{\sigma'})}{\LP(\ul{\sigma} \cap \ul{\sigma'})}}{\LP(\ul{\sigma})} \\
    &= \sum_{\ul{\rho} \subset \ul{\sigma}} \LP(\ul{\rho})^{-1}
    \cdot \sum_{\substack{\ul{\sigma'} \supset \ul{\rho} \\ \ul{\sigma'} \neq \ul{\sigma}}} \LP(\ul{\sigma'}) \\
    &= \sum_{\ul{\rho} \subset \ul{\sigma}} \LP(\ul{\rho})^{-1} \cdot \left( \LP(\ul{\rho}) - \LP(\ul{\sigma}) \right) \\
    &= k + 1 - \LP(\ul{\sigma}) \cdot \sum_{\ul{\rho} \subset \ul{\sigma}} \LP(\ul{\rho})^{-1} \\
    &\le k + 1.
\end{align*}

\begin{figure}[t]
    \centering
    \includegraphics[scale=0.62]{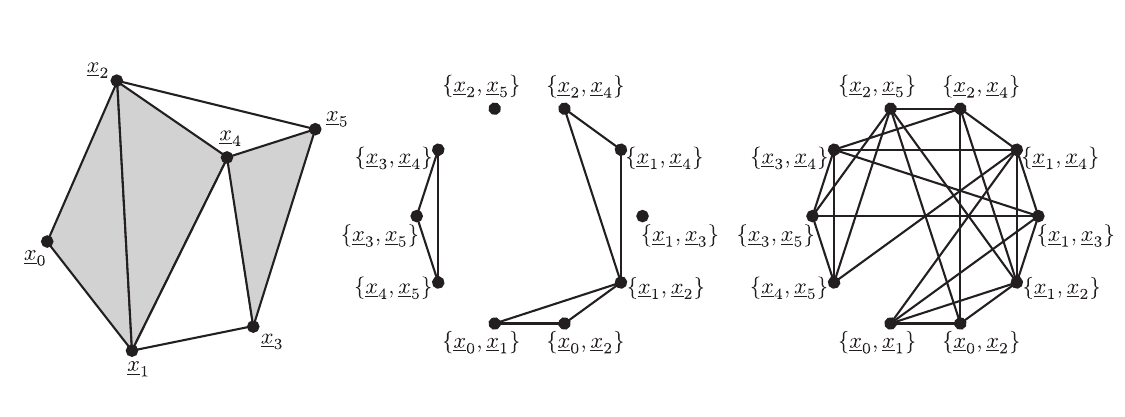}
    \caption{Construction of the auxiliary quotient graphs $\ul{\Gamma}^{\aux, \up}$ (in the middle) and $\ul{\Gamma}^{\aux, \down}$ (on the right) associated with a simplicial complex (on the left), when $k=1$. About $\ul{\Gamma}^{\aux, \up}$: $\{\ul{x}_1, \ul{x}_3\}$ and $\{\ul{x}_2, \ul{x}_5\}$ are isolated nodes in $\ul{\Gamma}^{\aux, \up}$, as they are leaves of $\ul{\Gamma}$; the graph $\ul{\Gamma}^{\aux, \up}$ has two further connected components as, for instance, there is no path of triangles joining the edge $\{\ul{x}_0, \ul{x}_1\}$ to the edge $\{\ul{x}_3, \ul{x}_5\}$. Signature functions, edge weights and node measures are not specified in this figure.}
    \label{figure.auxiliary_graphs}
\end{figure}

It follows that
\[
d_{C_k}^\down = d_\mu^{\omega^\down} = k + 1 - \min_{\ul{\sigma} \in \ul{C}_k} \left( \LP(\ul{\sigma}) \cdot \sum_{\ul{\rho} \subset \ul{\sigma}} \LP(\ul{\rho})^{-1} \right) \le k + 1.
\]
The same computations allow us to simplify the matrix form of the quotient Laplacian of this auxiliary graph:
\[
\Delta^{\ul{\Gamma}^{\aux, \down}}_{\ul{\sigma\sigma'}} = \begin{cases}
    k + 1 - \LP(\ul{\sigma}) \cdot \sum_{\ul{\rho} \subset \ul{\sigma}} \LP(\ul{\rho})^{-1} & \text{if $\ul{\sigma} = \ul{\sigma'}$,} \\
    - \LP(\ul{\sigma})^{1/2} \cdot \LP(\ul{\sigma'})^{1/2} \cdot \LP(\ul{\sigma} \cap \ul{\sigma'})^{-1} & \text{if $\ul{\sigma} \sim^\down \ul{\sigma'}$}, \\
    0 & \text{otherwise}.
\end{cases}
\]
Again, a critical equality is
\[
    \Delta^{\ul{\Gamma}^{\aux, \down}} = (k+1) \cdot I_{N_{\ul{C}_k}} - (k+1) \cdot A^{\ul{C}_k, \up}_k,
\]
or equivalently
\[
   I_{N_{\ul{C}_k}} - A^{\ul{C}_k, \down}_k = \frac{\Delta^{\ul{\Gamma}^{\aux, \down}}}{k + 1}.
\]
In particular,
\[
1 - \lambda_{\max - 1} \big( A^{\ul{C}_k, \down}_k \big) = \frac{\lambda_{\min + 1} \big( \Delta^{\ul{\Gamma}^{\aux, \down}} \big)}{k + 1}.
\]
We define the \textbf{$k$-th quotient down-Cheeger constant} $\ul{h}_{C_k}^\down$ as the Cheeger constant $\ul{h}^{\aux, \down}$ associated with this auxiliary graph, that is:
\[
\ul{h}_{C_k}^\down = \min_{\substack{\ul{Y} \subseteq \ul{C}_k \\ \ul{Y} \neq \emptyset, \ul{C}_k}} \ul{\beta}_k^\down (\ul{Y}),
\]
where
\[
\ul{\beta}_k^\down (\ul{Y}) = \frac{\omega^\down \big(\ul{Y}, \ul{C}_k \setminus \ul{Y} \big)}{\min \big( \mu(\ul{Y}), \mu(\ul{C}_k \setminus \ul{Y}) \big)}.
\]
\begin{remark}\label{remark.beta_down_quotient_positive}
    Notice that, since $\ul{C}_k$ is connected, $\ul{\beta}_k^\down (\ul{Y})$ is positive, and so is $\ul{h}_{C_k}^\down$.
\end{remark}
Equation \ref{equation.cheeger_auxiliary_quotient} then implies:
\begin{lemma}\label{lemma.cheeger_inequality_quotient_down}
    Given $k \ge 0$ and a quotient-down-component $\ul{C}_k$ in dimension $k$ of a simplicial complex containing at least two $k$-faces, we have that
    \[
    \frac{\big(\ul{h}_{C_k}^\down \big)^2}{2 \cdot d_{C_k}^\down \cdot (k+1)} \le 1 - \lambda_{\max - 1} \big( A^{\ul{C}_k, \down}_k \big) \le \frac{2 \cdot \ul{h}_{C_k}^\down}{k + 1}.
    \]
\end{lemma}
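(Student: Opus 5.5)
The plan is to apply the quotient Cheeger inequality \eqref{equation.cheeger_auxiliary_quotient} to the auxiliary graph $\Gamma^{\aux,\down}$ built above. We then transfer the resulting bound to $A^{\ul{C}_k,\down}_k$ through the identity $I_{N_{\ul{C}_k}} - A^{\ul{C}_k,\down}_k = \Delta^{\ul{\Gamma}^{\aux,\down}}/(k+1)$, exactly mirroring the proof of Lemma \ref{lemma.cheeger_inequality_quotient_up}.

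\textbf{Step 1: hypotheses of \eqref{equation.cheeger_auxiliary_quotient}.}
\begin{itemize}
\item The quotient $\ul{\Gamma}^{\aux,\down}$ is connected, since its nodes form the quotient-down-component $\ul{C}_k$ and its edges are exactly the down-adjacencies, so down-connectedness of $\ul{C}_k$ is connectedness of this graph.
\item It is not reduced to a single node, because we assume $\ul{C}_k$ contains at least two $k$-faces.
\item Hence $d_{C_k}^\down > 0$ by Remark \ref{remark.d_mu_omega_zero}, and \eqref{equation.cheeger_auxiliary_quotient} gives
\[
\frac{(\ul{h}_{C_k}^\down)^2}{2\, d_{C_k}^\down} \le \lambda_{\min+1}\big(\Delta^{\ul{\Gamma}^{\aux,\down}}\big) \le 2\,\ul{h}_{C_k}^\down,
\]
since $\ul{h}^{\aux,\down} = \ul{h}_{C_k}^\down$ and $d_\mu^{\omega^\down} = d_{C_k}^\down$ by definition.
\end{itemize}

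\textbf{Step 2: verify the matrix identity.} This is the only step needing real checking; the displayed identity has a typo (``up'' for ``down'').
\begin{itemize}
\item Use $\RP(\ul{\sigma}) = (k+1)!$ and $\RP(\ul{\rho}) = k!$ from Lemma \ref{lemma.simple_description_root_path_function_sc}. Then $H(\ul{\sigma}) = \LP(\ul{\sigma})/(k+1)!$ and $H(\ul{\rho})^{-1} = k!/\LP(\ul{\rho})$.
\item Off the diagonal, for down-adjacent $\ul{\sigma},\ul{\sigma'}$,
\[
(A^{\ul{\Gamma},\down}_k)_{\ul{\sigma\sigma'}} = \frac{\LP(\ul{\sigma})^{1/2}\,\LP(\ul{\sigma'})^{1/2}}{(k+1)\,\LP(\ul{\sigma}\cap\ul{\sigma'})}.
\]
This is $-1/(k+1)$ times the off-diagonal entry of $\Delta^{\ul{\Gamma}^{\aux,\down}}$, since the common boundary subface $\ul{\sigma}\cap\ul{\sigma'}$ is unique.
\item On the diagonal, faces in $\ul{C}_k$ are not roots when $k\ge 1$, which holds since $\ul{C}_k$ is not a root. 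So
\[
(A^{\ul{\Gamma},\down}_k)_{\ul{\sigma\sigma}} = \frac{1}{k+1}\,\LP(\ul{\sigma}) \sum_{\ul{\rho}\subset\ul{\sigma}} \LP(\ul{\rho})^{-1},
\]
and $(k+1)\big(1 - (A^{\ul{\Gamma},\down}_k)_{\ul{\sigma\sigma}}\big)$ matches the diagonal entry computed before the statement.
\item Both operators are restricted to $\ul{C}_k$. By Proposition \ref{proposition.existence_uniqueness_stationary_distributions_up_down} the down-walk does not leave $\ul{C}_k$, so the restriction is a genuine block.
\end{itemize}

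\textbf{Step 3: spectral translation.}
\begin{itemize}
\item The identity gives $\lambda(A^{\ul{C}_k,\down}_k) = 1 - \lambda(\Delta^{\ul{\Gamma}^{\aux,\down}})/(k+1)$, an order-reversing bijection of spectra counted with multiplicity.
\item The smallest eigenvalue $0$ of $\Delta^{\ul{\Gamma}^{\aux,\down}}$, with eigenvector $\mu^{1/2}$, corresponds to the eigenvalue $1$ of $A^{\ul{C}_k,\down}_k$. Hence $\lambda_{\min+1}$ of the Laplacian corresponds to $\lambda_{\max-1}$ of $A^{\ul{C}_k,\down}_k$, that is,
\[
1 - \lambda_{\max-1}\big(A^{\ul{C}_k,\down}_k\big) = \frac{\lambda_{\min+1}\big(\Delta^{\ul{\Gamma}^{\aux,\down}}\big)}{k+1}.
\]
\item Dividing the inequality of Step 1 by $k+1$ yields
\[
\frac{(\ul{h}_{C_k}^\down)^2}{2\, d_{C_k}^\down\,(k+1)} \le 1 - \lambda_{\max-1}\big(A^{\ul{C}_k,\down}_k\big) \le \frac{2\,\ul{h}_{C_k}^\down}{k+1},
\]
which is the claim.
\end{itemize}

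I expect no real obstacle beyond the bookkeeping of factorials in Step 2 and the connectivity check in Step 1.
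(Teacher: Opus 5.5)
Your proposal is correct and follows the paper's own route. You apply the quotient Cheeger inequality \eqref{equation.cheeger_auxiliary_quotient} to $\ul{\Gamma}^{\aux,\down}$ and transfer it through $I_{N_{\ul{C}_k}} - A^{\ul{C}_k,\down}_k = \Delta^{\ul{\Gamma}^{\aux,\down}}/(k+1)$, so that $1-\lambda_{\max-1}(A^{\ul{C}_k,\down}_k) = \lambda_{\min+1}(\Delta^{\ul{\Gamma}^{\aux,\down}})/(k+1)$. Your entrywise check of that identity is accurate and makes explicit what the paper leaves implicit, including your observation that the paper's displayed identity should read ``down'' rather than ``up''.
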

For the signed counterpart,
\[
\Delta^{\calO^{\aux, \down}(\ul{\Gamma}^{\aux, \down})}_{\ul{\sigma\sigma'}} = \begin{cases}
    k + 1 - \LP(\ul{\sigma}) \cdot \sum_{\ul{\rho} \subset \ul{\sigma}} \LP(\ul{\rho})^{-1} & \text{if $\ul{\sigma} = \ul{\sigma'}$,} \\
    - s^\down(\sigma, \sigma') \\
    \qquad \cdot \LP(\ul{\sigma})^{1/2} \cdot \LP(\ul{\sigma'})^{1/2} \cdot \LP(\ul{\sigma} \cap \ul{\sigma'})^{-1} & \text{if $\ul{\sigma} \sim^\down \ul{\sigma'}$}, \\
    0 & \text{otherwise}.
\end{cases}
\]
We obtain the equality
\[
\Delta^{\calO^{\aux, \down}(\ul{\Gamma}^{\aux, \down})} = (k + 1) \cdot I_{N_{\ul{C}_k}} + (k + 1) \cdot A_k^{\calO(\ul{C}_k), \down},
\]
or equivalently
\[
I_{N_{\ul{C}_k}} - \big( - A_k^{\calO(\ul{C}_k), \down} \big) = \frac{\Delta^{\calO^{\aux, \down}(\ul{\Gamma}^{\aux, \down})}}{k + 1}.
\]
In particular,
\[
1 - \left( - \lambda_{\min} \big( A_k^{\calO(\ul{C}_k), \down} \big) \right) \hspace{-0.06cm} = \hspace{-0.06cm}
1 - \lambda_{\max} \big( - A_k^{\calO(\ul{C}_k), \down} \big) \hspace{-0.06cm} = \hspace{-0.06cm} \frac{\lambda_{\min} \big( \Delta^{\calO^{\aux, \down}(\ul{\Gamma}^{\aux, \down})} \big)}{k + 1}.
\]
We define the \textbf{$k$-th signed down-Cheeger constant} $h_{C_k}^\down$ as the Cheeger constant $h^{\aux, \down}$ associated with the considered auxiliary graph:
\[
h_{C_k}^\down = \min_{\substack{\ul{Y} \subseteq \ul{C}_k, \calO \\ \ul{Y} \neq \emptyset}} \beta_k^\down (\ul{Y}, \calO),
\]
where
\[
\beta_k^\down (\ul{Y}, \calO) = \frac{\omega^\down \big(\ul{Y}, \ul{C}_k \setminus \ul{Y} \big) + \omega^\down_{-} (\ul{Y}, \calO)}{\mu(\ul{Y})}.
\]
\begin{remark}\label{remark.beta_down_signed_zero}
    Notice that, since $\ul{C}_k$ is connected, $\ul{\beta}_k^\down (\ul{Y}) = 0$ if and only if $\ul{Y} = \ul{C}_k$, and
    \[
    \omega^\down_{-} (\ul{Y}, \calO) = \omega^\down_{-} (\ul{C}_k, \calO) = 0
    \]
    for some orientation $\calO$, that is, $\ul{C}_k$ is a coherent-down-component in dimension $k$. Therefore, $h_{C_k}^\down = 0$ if and only if $\ul{C}_k$ is a coherent-down-component.
\end{remark}
From Equation \ref{equation.cheeger_auxiliary_orientation}, we obtain:
\begin{lemma}\label{lemma.cheeger_inequality_orientation_down}
    Given $k \ge 0$ and a quotient-down-component $\ul{C}_k$ in dimension $k$ of a simplicial complex containing at least two $k$-faces, we have that
    \[
    \frac{\big(h_{C_k}^\down \big)^2}{2 \cdot d_{C_k}^\down \cdot (k+1)} \le 1 - \left( - \lambda_{\min} \big( A_k^{\calO(\ul{C}_k), \down} \big) \right) \le \frac{2 \cdot h_{C_k}^\down}{k + 1}.
    \]
\end{lemma}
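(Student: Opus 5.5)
This lemma is the down-case analogue of Lemma \ref{lemma.cheeger_inequality_orientation_up}. I would derive it directly from the general signed Cheeger inequality in Equation \ref{equation.cheeger_auxiliary_orientation}, applied to the auxiliary graph $\Gamma^{\aux,\down}$ built on the oriented $k$-faces of $C_k$. The main work is checking that the hypotheses of that inequality hold here, and then transporting the bound through the identity
\[
\Delta^{\calO^{\aux, \down}(\ul{\Gamma}^{\aux, \down})} = (k + 1) \cdot I_{N_{\ul{C}_k}} + (k + 1) \cdot A_k^{\calO(\ul{C}_k), \down}
\]
established just above.

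First I would verify the hypotheses. The quotient $\ul{\Gamma}^{\aux,\down}$ is connected, because $\ul{C}_k$ is by definition maximally down-connected and the edges of the auxiliary graph are exactly the down-adjacencies. It is not reduced to a single node, since $\ul{C}_k$ contains at least two $k$-faces. By Remark \ref{remark.d_mu_omega_zero}, this gives $d_{C_k}^\down = d_\mu^{\omega^\down} > 0$, so the left-hand side of the inequality is well defined. I would also double-check that the weights $\omega^\down$ and the measure $\mu = \LP$ are strictly positive, which is immediate since $\LP$ takes positive values.

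Next I would confirm the matrix identity entrywise. On the diagonal, the expression $(A_k^{\ul{\Gamma},\down})$-style computation uses $H(\ul{\sigma}) = \LP(\ul{\sigma})/(k+1)!$ and $H(\ul{\rho})^{-1} = k!/\LP(\ul{\rho})$. The diagonal entry of $-A_k^{\calO,\down}$ is then $\LP(\ul{\sigma})\sum_{\ul{\rho}\subset\ul{\sigma}}\LP(\ul{\rho})^{-1}/(k+1)$, because $[\sigma:\rho]^2=1$. This matches the diagonal entry $k+1-\LP(\ul{\sigma})\sum_{\ul{\rho}}\LP(\ul{\rho})^{-1}$ after multiplying by $k+1$ and adding $(k+1)I$. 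Off the diagonal, the sign $-[\sigma:\rho][\sigma':\rho] = -s^\down(\sigma,\sigma')$ appears, with the same $\LP$ factors. I expect this bookkeeping of the factorials and signs to be the only delicate point. If a discrepancy arose, I would recheck it against the explicit $\Delta_k^\down$ formula of Section \ref{section.normalised_laplacian}, using $\Delta_k^\down = -A_k^{\calO(\ul{\Gamma}),\down}$.

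With the identity in place, the spectra are related by an affine map:
\[
\lambda_{\min}\big(\Delta^{\calO^{\aux,\down}(\ul{\Gamma}^{\aux,\down})}\big) = (k+1)\big(1+\lambda_{\min}(A_k^{\calO(\ul{C}_k),\down})\big),
\]
and the right-hand factor equals $1-\big(-\lambda_{\min}(A_k^{\calO(\ul{C}_k),\down})\big)$. By definition, $h^{\aux,\down}=h_{C_k}^\down$. Substituting into Equation \ref{equation.cheeger_auxiliary_orientation} and dividing by $k+1$ yields exactly
\[
\frac{(h_{C_k}^\down)^2}{2\, d_{C_k}^\down (k+1)} \le 1-\big(-\lambda_{\min}(A_k^{\calO(\ul{C}_k),\down})\big) \le \frac{2 h_{C_k}^\down}{k+1}.
\]
Finally, the choice of orientation $\calO$ is immaterial by Remark \ref{remark.flip_function_calO}, since all choices give spectrally equivalent operators.
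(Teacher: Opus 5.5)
Your proposal is correct and follows the paper's argument: the paper obtains the lemma by applying the signed Cheeger inequality (Equation \ref{equation.cheeger_auxiliary_orientation}) to $\Gamma^{\aux,\down}$, transporting it through the identity $\Delta^{\calO^{\aux,\down}(\ul{\Gamma}^{\aux,\down})} = (k+1)\big(I_{N_{\ul{C}_k}} + A_k^{\calO(\ul{C}_k),\down}\big)$, and dividing by $k+1$. Your entrywise check of that identity is right, up to a wording slip on the diagonal: the diagonal entry is $k+1$ minus $(k+1)$ times the entry of $-A_k^{\calO(\ul{C}_k),\down}$, not plus. Your explicit verification of connectivity and of $d_{C_k}^\down>0$ spells out hypotheses that the paper leaves implicit.
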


\subsection{Merging up- and down-cases}\label{subsection.cheeger_merging_everything}
The Cheeger inequalities obtained in the previous sections can be combined by means of the relation between the spectra of $(k-1)$-up and $k$-down operators described in Theorem \ref{theorem.spectrum_up_down_double_cover_quotient_signed_graph}, producing tighter simultaneous bounds on such operators. We present it here from two perspectives: first, that of root-to-leaf path random walks (Theorem \ref{theorem.spectral_gap_cheeger}); then, that of normalized Laplacians on simplicial complexes (Theorem \ref{theorem.spectral_gap_cheeger_laplacians}). \\

Assume that $\ul{C}_{k-1}$ is a quotient-up-component in dimension $k-1$ that is not a leaf, and let $\ul{C}_k$ its corresponding quotient-down-component in dimension $k$ that is not a root, as in Lemma \ref{lemma.components_of_Gamma}. If one of these two components only contains one quotient node, then, as pointed out in Remark \ref{remark.automatic_coherent}, these components are coherent. In this case, Theorem \ref{theorem.-1_eigenvalue}, Remark \ref{remark.beta_up_signed_zero} and Remark \ref{remark.beta_down_signed_zero} replace the signed Cheeger inequality in the result below: all three terms of the double-inequality are zero. If, instead, $\ul{C}_{k-1}$ and $\ul{C}_k$ contain at least two $(k-1)$- and $k$-faces, respectively, then:
\begin{theorem}\label{theorem.spectral_gap_cheeger}
    Given a simplicial complex $\Gamma$ and $k$ with $1 \le k \le \dim (\Gamma)$, assume that $\ul{C}_{k-1}$ is a quotient-up-component in dimension $k-1$ that is not a leaf, and let $\ul{C}_k$ its corresponding quotient-down-component in dimension $k$ that is not a root, as in Lemma \ref{lemma.components_of_Gamma}. Assume that $\ul{C}_{k-1}$ and $\ul{C}_k$ contain at least two $(k-1)$- and $k$-faces, respectively. Then, the following combined Cheeger inequalities hold:
    \begin{align*}
    \frac{\max \left(
    \frac{\big(\ul{h}_{C_{k-1}}^\up \big)^2}{k},
    \frac{\big(\ul{h}_{C_k}^\down \big)^2}{d_{C_k}^\down}
    \right)}{2 \cdot (k + 1)}
    &\le 1 - \lambda_{\max - 1} \big(A_{k-1}^{\ul{C}_{k-1}, \up}\big) \\
    &= 1 - \lambda_{\max - 1} \big(A_k^{\ul{C}_k, \down}\big) \le
    \frac{ 2 \cdot
    \min \left( \ul{h}_{C_{k-1}}^\up, \ul{h}_{C_k}^\down \right)
    }{k + 1},
    \end{align*}
    as well as
    \begin{align*}
    \frac{\max \left(
    \frac{\big(h_{C_{k-1}}^\up \big)^2}{k},
    \frac{\big(h_{C_k}^\down \big)^2}{d_{C_k}^\down}
    \right)}{2 \cdot (k + 1)}
    &\le 1 - \left( - \lambda_{\min} \big( A_{k-1}^{\calO(\ul{C}_{k-1}), \up} \big) \right) \\
    &= 1 - \left( - \lambda_{\min} \big( A_k^{\calO(\ul{C}_k), \down} \big) \right) \le
    \frac{ 2 \cdot
    \min \left( h_{C_{k-1}}^\up, h_{C_k}^\down \right)
    }{k + 1}.
    \end{align*}
    Finally, assume additionally that these components are not coherent. Then, the convergence rate shared by the up-walk on $C_{k-1}$ and the down-walk on $C_k$ is lower-bounded by
    \[
    1 - \frac{ 2 \cdot
    \max \left( \min \left( \ul{h}_{C_{k-1}}^\up, \ul{h}_{C_k}^\down \right) ,
    \min \left( h_{C_{k-1}}^\up, h_{C_k}^\down
    \right) \right)
    }{k + 1}
    \]
    and upper-bounded by
    \[
    1 -
    \frac{\min \left( \max \left(
    \frac{\big(\ul{h}_{C_{k-1}}^\up \big)^2}{k},
    \frac{\big(\ul{h}_{C_k}^\down \big)^2}{d_{C_k}^\down} \right) ,
    \max \left( \frac{\big(h_{C_{k-1}}^\up \big)^2}{k},
    \frac{\big(h_{C_k}^\down \big)^2}{d_{C_k}^\down}
    \right) \right)}{2 \cdot (k + 1)}.
    \]
\end{theorem}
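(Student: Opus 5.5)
The plan is to obtain each combined inequality by applying the four single-sided Cheeger inequalities to the matched pair of components, and then using the spectral coincidences from Theorem \ref{theorem.spectrum_up_down_double_cover_quotient_signed_graph} to combine them.

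\emph{Quotient case.} Apply Lemma \ref{lemma.cheeger_inequality_quotient_up} in dimension $k-1$ to $\ul{C}_{k-1}$. Replacing $k$ by $k-1$ there, the lower bound becomes $(\ul{h}^\up_{C_{k-1}})^2/(2k(k+1))$ and the upper bound becomes $2\ul{h}^\up_{C_{k-1}}/(k+1)$. Then apply Lemma \ref{lemma.cheeger_inequality_quotient_down} in dimension $k$ to $\ul{C}_k$. This gives the lower bound $(\ul{h}^\down_{C_k})^2/(2 d^\down_{C_k}(k+1))$ and the upper bound $2\ul{h}^\down_{C_k}/(k+1)$. Both hypotheses (not a leaf, and at least two $k$-faces) are assumed.

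The key identification is $\lambda_{\max-1}(A^{\ul{C}_{k-1},\up}_{k-1}) = \lambda_{\max-1}(A^{\ul{C}_k,\down}_k)$. I would justify it with item 5 of Theorem \ref{theorem.spectrum_up_down_double_cover_quotient_signed_graph}, restricted to the paired components as in Remark \ref{remark.restriction_theorem_functions_up_down}. On a non-leaf up-component in a simplicial complex no node is a leaf, and on a non-root down-component with $k\ge1$ no node is a root. Indeed, every node of $\ul{C}_{k-1}$ lies under some $k$-face, because up-connectedness with at least two elements gives this; and every $k$-face has subfaces. So the projector terms vanish, and $A^{\up}_{k-1}=(\delta_{k-1})^T\delta_{k-1}$ and $A^\down_k=\delta_{k-1}(\delta_{k-1})^T$ on these components share their non-zero spectrum.

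The top eigenvalue $1$ (Theorem \ref{theorem.spectral_gap_theorem_conditional}, or the stationary vector $(\LP\cdot\RP)^{1/2}$) appears on both sides with multiplicity one, by irreducibility on the quotient component. The second largest eigenvalues then agree, provided both are non-zero or are handled by the zero eigenvalue. The mild subtlety is that if $\lambda_{\max-1}$ is $0$ on one side, it must also be $\le 0$ on the other. Since both operators are positive semidefinite, the second largest eigenvalue on each side equals the second largest non-zero eigenvalue if one exists, and $0$ otherwise. Both sides have at least two nodes, so a zero slot exists when needed, and the values agree.

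Once the two quantities are known to be equal, each lower bound is a lower bound for the common gap, so their maximum is too. Likewise each upper bound is an upper bound, so their minimum is too; this is the first display.

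\emph{Signed case.} The same argument with Lemmas \ref{lemma.cheeger_inequality_orientation_up} and \ref{lemma.cheeger_inequality_orientation_down}, combined through item 6, gives $\lambda_{\min}(A^{\calO(\ul{C}_{k-1}),\up}_{k-1})=\lambda_{\min}(A^{\calO(\ul{C}_k),\down}_k)$. Here the operators are $-M^TM$ and $-MM^T$ with $M=\delta^{\calO}_{k-1}$ restricted to the components. Their non-zero spectra coincide, and if one minimum is $0$ then so is the other, since each component has at least two nodes.

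\emph{Convergence rate.} Under non-coherence, Theorem \ref{theorem.spectral_gap_theorem_conditional} says the rate is $\max(\lambda_{\max-1},-\lambda_{\min})$, which equals $1-\min(\text{gap}_{\rm quot},\text{gap}_{\rm signed})$. Substituting the two-sided bounds on each gap gives the result. The minimum of the two gaps is at most the minimum of their upper bounds, which yields the lower bound on the rate with the outer maximum. It is also at least the minimum of their lower bounds, which yields the upper bound on the rate.

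The main obstacle is the careful justification of the spectral equalities. This means checking that the leaf/root projector terms vanish on the chosen components, and handling the possible zero eigenvalues so that the ``second maximal'' and ``minimal'' eigenvalues truly coincide rather than merely their non-zero parts.
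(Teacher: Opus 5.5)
Your proposal is correct and takes the paper's route. The paper's proof likewise combines Lemmas \ref{lemma.cheeger_inequality_quotient_up}, \ref{lemma.cheeger_inequality_quotient_down}, \ref{lemma.cheeger_inequality_orientation_up} and \ref{lemma.cheeger_inequality_orientation_down} through the shared spectral gaps of Theorem \ref{theorem.spectral_gap_theorem_conditional}, which themselves come from items 5--6 of Theorem \ref{theorem.spectrum_up_down_double_cover_quotient_signed_graph}. Your direct appeal to those items, together with checking that the leaf/root projector terms vanish and handling the zero eigenvalue, is slightly more careful than the paper: it justifies the first two displays without the non-coherence hypothesis that Theorem \ref{theorem.spectral_gap_theorem_conditional} carries.
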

\begin{proof}
    This follows by specializing Theorem \ref{theorem.spectral_gap_theorem_conditional} and combining the Cheeger inequalities in Lemma \ref{lemma.cheeger_inequality_quotient_up}, Lemma \ref{lemma.cheeger_inequality_quotient_down}, Lemma \ref{lemma.cheeger_inequality_orientation_up}, and Lemma \ref{lemma.cheeger_inequality_orientation_down}.
\end{proof}
Equivalently, we rewrite this result in terms of normalized Laplacians on simplicial complexes:
\begin{theorem}\label{theorem.spectral_gap_cheeger_laplacians}
    Given a simplicial complex $\Gamma$ and $k$ with $1 \le k \le \dim (\Gamma)$, assume that $\ul{C}_{k-1}$ is a quotient-up-component in dimension $k-1$ that is not a leaf, and let $\ul{C}_k$ its corresponding quotient-down-component in dimension $k$ that is not a root, as in Lemma \ref{lemma.components_of_Gamma}. Then, the following combined Cheeger inequality holds for its normalized Laplacians:
    \begin{align*}
    \frac{\max \left(
    \frac{\big(h_{C_{k-1}}^\up \big)^2}{k},
    \frac{\big(h_{C_k}^\down \big)^2}{d_{C_k}^\down}
    \right)}{2 \cdot (k + 1)}
    &\le 1 - \lambda_{\max} \big( \Delta_{k-1}^\up|_{\calO(\ul{C}_{k-1})} \big) \\
    &= 1 - \lambda_{\max} \big( \Delta_k^\down|_{\calO(\ul{C}_k)} \big) \le
    \frac{ 2 \cdot
    \min \left( h_{C_{k-1}}^\up, h_{C_k}^\down \right)
    }{k + 1}.
    \end{align*}
\end{theorem}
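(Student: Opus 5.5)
The plan is to reduce the statement to the signed half of Theorem \ref{theorem.spectral_gap_cheeger}, using the dictionary of Theorem \ref{theorem.normalization}.

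\textbf{Step 1: translate the spectral gap.} By Theorem \ref{theorem.normalization}, $\Delta_{k-1}^\up = -A_{k-1}^{\calO(\ul{\Gamma}), \up}$ and $\Delta_k^\down = -A_k^{\calO(\ul{\Gamma}), \down}$. By Remark \ref{remark.restriction_theorem_functions_up_down}, these operators are block diagonal with respect to the decomposition into quotient-up- (respectively quotient-down-) components. Restricting to $\calO(\ul{C}_{k-1})$ and $\calO(\ul{C}_k)$ therefore gives
\[
\lambda_{\max}\big(\Delta_{k-1}^\up|_{\calO(\ul{C}_{k-1})}\big) = -\lambda_{\min}\big(A_{k-1}^{\calO(\ul{C}_{k-1}), \up}\big),
\]
and the analogous identity in the down-case.

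\textbf{Step 2: the equality of the two gaps.} This follows from item 6 of Theorem \ref{theorem.spectrum_up_down_double_cover_quotient_signed_graph}, restricted to the associated pair of components as in Remark \ref{remark.restriction_theorem_functions_up_down}. Equivalently, it is NH3 applied to the restricted blocks. Both operators are of the form $M^TM$ and $MM^T$ with $M = \delta_{k-1}$ restricted to these components, so they share their nonzero spectrum. One must check that the maximal eigenvalue is attained in the nonzero part of the spectrum. If both maxima were $0$, then $M = 0$, which is impossible because $\ul{C}_{k-1}$ is not a leaf. Hence the maxima coincide.

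\textbf{Step 3: the Cheeger bounds, split by case.}

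\emph{Generic case.} Suppose $\ul{C}_{k-1}$ and $\ul{C}_k$ each contain at least two faces. Then the signed double inequality of Theorem \ref{theorem.spectral_gap_cheeger} gives the claim verbatim after substituting Step 1. Spelled out, the argument is as follows.
\begin{itemize}
\item Lemma \ref{lemma.cheeger_inequality_orientation_up} is applied in dimension $k-1$. It has denominator $2k(k+1)$ and upper bound $2h^\up_{C_{k-1}}/(k+1)$.
\item Lemma \ref{lemma.cheeger_inequality_orientation_down} is applied in dimension $k$. It has denominator $2\,d^\down_{C_k}(k+1)$ and upper bound $2h^\down_{C_k}/(k+1)$.
\end{itemize}
Both lemmas bound the same quantity, by Step 2, so we may take the maximum of the two lower bounds and the minimum of the two upper bounds.

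\emph{Degenerate case.} Suppose one of the two components is a single face. Then both are coherent, by Remark \ref{remark.automatic_coherent} together with Lemma \ref{lemma.components_of_Gamma}(4). Theorem \ref{theorem.-1_eigenvalue} then shows that $-1$ is an eigenvalue, so the middle term vanishes. Remarks \ref{remark.beta_up_signed_zero} and \ref{remark.beta_down_signed_zero} give $h^\up_{C_{k-1}} = h^\down_{C_k} = 0$. All three terms are therefore zero, and the inequality holds with equality.

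\textbf{Expected obstacle.} The main care point is the degenerate case where $\ul{C}_k$ is a single face. There $d^\down_{C_k}$ may equal $0$, for instance when $\ul{C}_k$ has no down-neighbours (compare Remark \ref{remark.d_mu_omega_zero}). The term $(h^\down_{C_k})^2/d^\down_{C_k}$ is then of the form $0/0$. I would handle this by the convention that this term is read as $0$ whenever $h^\down_{C_k} = 0$, stating it explicitly so that the maximum in the lower bound is well-defined.

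A second point to verify is that the restriction in Step 1 commutes with the identification of Theorem \ref{theorem.normalization}. This holds because the normalizing weights $W_k$ are diagonal, so they do not mix the component blocks.
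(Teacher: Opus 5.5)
Your proposal is correct and follows the paper's own route: the theorem is the signed half of Theorem \ref{theorem.spectral_gap_cheeger}, translated through $\Delta_{k-1}^\up = -A_{k-1}^{\calO(\ul{\Gamma}),\up}$ and $\Delta_k^\down = -A_k^{\calO(\ul{\Gamma}),\down}$ from Theorem \ref{theorem.normalization}, with the single-face case handled as in the paragraph preceding Theorem \ref{theorem.spectral_gap_cheeger}, where all three terms vanish. Your $0/0$ caveat is well taken and can be stated more sharply: only $\ul{C}_k$ can be a single face, since a non-leaf $\ul{C}_{k-1}$ contains at least $k+1 \ge 2$ faces, and in that case $d_{C_k}^\down = 0$ always, so the convention you propose is genuinely needed and is left implicit in the paper.
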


\subsection{An explicit example: the tetrahedron}\label{subsection.example_tetrahedron}
We provide a tractable example to show that the combined bounds of Theorem \ref{theorem.spectral_gap_cheeger} are effective, that is, the optimal bounds are at times provided by an up-Cheeger constant, at other times by a down-Cheeger constant. \\

The simplicial complex $\Gamma$ that we consider is fairly simple: given four nodes $\ul{x}_0, \ul{x}_1, \ul{x}_2, \ul{x}_3$, consider the tetrahedron $\{\ul{x}_0, \ul{x}_1, \ul{x}_2, \ul{x}_3\}$ (and all its non-empty subsets). All the values of interest for this limited geometric object can be computed by hand or with the help of a computer. In Table \ref{table.bounds_quotient} and Table \ref{table.bounds_signed}, we summarize the relevant values for quotient and signed Cheeger constants and their associated combined inequalities, respectively. In both cases, we might observe that more competitive bounds can come from both up- or down-Cheeger constants.

\begin{table}[H]
    \centering
    \begin{tabular}{c|c|c|c|c|c|c|c|c}
        & & & & low. & low. & $k$-th & up. & up. \\
        & & & & bound & bound & quotient & bound & bound \\
        $k$ & $d_k^\down$ & $\ul{h}_{X_{k-1}}^\up$ & $\ul{h}_{X_k}^\down$ & $\ul{h}_{X_{k-1}}^\up$ & $\ul{h}_{X_k}^\down$ & sp. gap & $\ul{h}_{X_{k-1}}^\up$ & $\ul{h}_{X_k}^\down$ \\
        \hline
        $1$ & $4/3$ & $2/3$ & $2/3$ & $\mathbf{1/9}$ & $1/12$ & $2/3$ & $\mathbf{2/3}$ & $\mathbf{2/3}$ \\
        $2$ & $3/2$ & $1$ & $1$ & $1/12$ & $\mathbf{1/9}$ & $2/3$ & $\mathbf{2/3}$ & $\mathbf{2/3}$  \\
    \end{tabular}
    \caption{Numerical values relative to the quotient spectral gap in Theorem \ref{theorem.spectral_gap_cheeger} for the tetrahedron. Values that provide the tighter lower and upper bounds are in bold font. Lower and upper bounds provided by $\ul{h}_{X_{k-1}}^\up$ follow from Lemma \ref{lemma.cheeger_inequality_quotient_up}, while those provided by $\ul{h}_{X_k}^\down$ follow from Lemma \ref{lemma.cheeger_inequality_quotient_down}. The $k$-th quotient spectral gap is the spectral gap $1 - \lambda_{\max - 1} (A_{k-1}^{\ul{\Gamma}, \up}) = 1 - \lambda_{\max - 1} (A_k^{\ul{\Gamma}, \down})$.}
    \label{table.bounds_quotient}
\end{table}

\begin{table}[H]
    \centering
    \begin{tabular}{c|c|c|c|c|c|c|c|c}
        & & & & low. & low. & $k$-th & up. & up. \\
        & & & & bound & bound & signed & bound & bound \\
        $k$ & $d_k^\down$ & $h_{X_{k-1}}^\up$ & $h_{X_k}^\down$ & $h_{X_{k-1}}^\up$ & $h_{X_k}^\down$ & sp. gap & $h_{X_{k-1}}^\up$ & $h_{X_k}^\down$ \\
        \hline
        $1$ & $4/3$ & $1/3$ & $4/9$ & $1/36$ & $\mathbf{1/27}$ & $1/3$ & $\mathbf{1/3}$ & $4/9$ \\
        $2$ & $3/2$ & $2/3$ & $1/2$ & $\mathbf{1/27}$ & $1/36$ & $1/3$ & $4/9$ & $\mathbf{1/3}$  \\
    \end{tabular}
    \caption{Numerical values relative to the signed spectral gap in Theorem \ref{theorem.spectral_gap_cheeger} for the tetrahedron. Values that provide the tighter lower and upper bounds are in bold font. Lower and upper bounds provided by $h_{X_{k-1}}^\up$ follow from Lemma \ref{lemma.cheeger_inequality_orientation_up}, while those provided by $h_{X_k}^\down$ follow from Lemma \ref{lemma.cheeger_inequality_orientation_down}. The $k$-th quotient spectral gap is the spectral gap $1 - \lambda_{\max} (A_{k-1}^{\calO(\ul{\Gamma}), \up}) = 1 - \lambda_{\max} (A_k^{\calO(\ul{\Gamma}), \down})$.}
    \label{table.bounds_signed}
\end{table}

\endgroup

\section*{Acknowledgments}
FV was supported by the EPSRC (EP/S021590/1), the EPSRC Centre for Doctoral Training in Geometry and Number Theory (LSGNT), University College London, Imperial College London. TB was supported by a UKRI Future Leaders Fellowship [MR/Y018818/1]. MTS received funding by the European Union (ERC, HIGH-HOPeS, 101039827). Views and opinions expressed are however those of the author(s) only and do not necessarily reflect those of the European Union or the European Research Council Executive Agency. Neither the European Union nor the granting authority can be held responsible for them.

\clearpage
\addcontentsline{toc}{section}{References}
\renewcommand{\refname}{References}
{\small\sloppy\bibliographystyle{unsrtnat}\bibliography{ref}}

\end{document}